\DeclareMathAlphabet{\mathpzc}{OT1}{pzc}{m}{normal} % for \mathpzc, especially \H
\else\setboolean{proofappendix}{true}\fi
    \newenvironment{notes}{\it}{\upshape}
\newenvironment{nicearray}[1]
    {\begin{array}{@{}#1@{}}\hline\noalign{\smallskip}}
    {\\\hline\end{array}}
\newcommand{\spmidrule}{\noalign{\smallskip}\hline\noalign{\smallskip}}
\newcommand{\partitle}[1]{\textbf{#1}}
\spnewtheorem{assumption}[definition]{Assumption}{\bfseries}{\it}
\newcommand{\twnotei}[1]{\twnote[nofootnote,nomargin,inline]{#1}}
\newcommand{\textqt}[1]{``#1''}
\newcommand{\op}[1]{\operatorname{\textsf{\upshape {#1}}}}
\newcommand{\B}{\ensuremath{\mathcal{B}}\xspace}
\newcommand{\C}{\ensuremath{\mathcal{C}}\xspace}
\newcommand{\D}{\ensuremath{\mathcal{D}}\xspace}
\newcommand{\Coalg}{\ensuremath{\mathsf{Coalg}}\xspace}
\newcommand{\Coalgfg}{\ensuremath{\mathsf{Coalg}_{\mathsf{fg}}}\xspace}
\newcommand{\Coalglfg}{\ensuremath{\mathsf{Coalg}_{\mathsf{lfg}}}\xspace}
\newcommand{\Coalgfp}{\ensuremath{\mathsf{Coalg}_{\mathsf{fp}}}\xspace}
\newcommand{\Set}{\ensuremath{\mathsf{Set}}\xspace}
\renewcommand{\Vec}{\ensuremath{\mathsf{Vec}}}
\renewcommand{\H}{\ensuremath{\mathpzc{H}}\xspace}
\newcommand{\Hf}{\ensuremath{\mathpzc{H}_\textnormal{\sffamily f}}\xspace}
\newcommand{\Funf}{\ensuremath{\mathsf{Fun}_\textnormal{\sffamily f}}\xspace}
\newcommand{\Mndc}{\ensuremath{\mathsf{Mnd}_\textnormal{\sffamily c}}\xspace}
\newcommand{\Mndf}{\ensuremath{\mathsf{Mnd}_\textnormal{\sffamily f}}\xspace}
\newcommand{\EQ}[1][]{\ensuremath{\mathsf{EQ}_\textnormal{#1}}\xspace}
\newcommand{\id}{\ensuremath{\textnormal{id}}\xspace}
\newcommand{\inj}{\ensuremath{\mathsf{in}}}
\newcommand{\inl}{\ensuremath{\mathsf{inl}}}
\newcommand{\inr}{\ensuremath{\mathsf{inr}}}
\newcommand{\Id}{\ensuremath{\textnormal{Id}}\xspace}
\newcommand{\Pot}{\ensuremath{{\mathcal{P}}}\xspace}
\newcommand{\Potf}{\ensuremath{{\mathcal{P}_\textnormal{\sffamily f}}}\xspace}
\newcommand{\lff}{\ensuremath{\ell}}
\newcommand{\FPS}[2][S]{\ensuremath{#1\llangle#2\rrangle}}
\newcommand{\Poly}[2][S]{\ensuremath{#1\langle#2\rangle}}
\newcommand{\colim}{\ensuremath{\operatorname{colim}}\xspace}
\renewcommand{\o}{\ensuremath{\cdot}}
\newcommand{\fgiterative}{fg-iterative\xspace}
\newcommand{\LFF}{\vartheta}
\newcommand{\fpair}[1]{\ensuremath{\langle #1 \rangle}}
\newcommand{\fuse}[1]{\ensuremath{[#1]}}
\newcommand{\N}{\ensuremath{\mathbb{N}}}
\newcommand{\fp}{{\textsf{\upshape{fp}}}}
\newcommand{\lfp}{{\textsf{\upshape{lfp}}}}
\renewcommand{\Im}{\op{Im}}
\newsavebox{\@brx}
\newcommand{\llangle}[1][]{\savebox{\@brx}{\(\m@th{#1\langle}\)}%
  \mathopen{\copy\@brx\kern-0.5\wd\@brx\usebox{\@brx}}}
\newcommand{\rrangle}[1][]{\savebox{\@brx}{\(\m@th{#1\rangle}\)}%
  \mathclose{\copy\@brx\kern-0.5\wd\@brx\usebox{\@brx}}}
\tikzset{
    every diagram/.style={
        row sep=1cm,
        column sep=1cm,
    }
}
\newcommand{\descto}[3][]{
    \arrow[draw=none]{#2}[description,#1]{#3}
}
\tikzset{oldequal/.style={
    double equal sign distance,
    -,
}}
\tikzset{shiftarr/.style={
        rounded corners,%
        to path={--([#1]\tikztostart.center)
                     -- ([#1]\tikztotarget.center) \tikztonodes
                     -- (\tikztotarget)},
}}
\tikzset{commutative diagrams/diagrams={
    rounded corners,
}}
\tikzstyle{mathnodes}=[
\tikzstyle{anchorcenter}=[
\tikzstyle{lambdatree}=[
\tikzstyle{level}=[
\newcommand{\PMNC}{\pgfmatrixnextcell}
\newcommand{\adjointpair}{{%
\hspace{-3mm}\begin{tikzcd}[column sep=8mm,shorten <= 1mm, shorten >=1mm]
        {}
            \arrow[yshift=0.4mm,bend left=16]{r}[name=LeftAdjoint,above]{}
        \pgfmatrixnextcell {}
            \arrow[yshift=-0.4mm,bend left=16]{l}[name=RightAdjoint,below]{}
            \arrow[draw=none,to path =(LeftAdjoint) -- (RightAdjoint) \tikztonodes]{}[anchor=center,sloped]{\dashv}
\end{tikzcd}\hspace{-3mm}}}
\newbox\xrat@below
\newbox\xrat@above
\newcommand{\xrightarrowtail}[2][]{%
  \setbox\xrat@below=\hbox{\ensuremath{\scriptstyle #1}}%
  \setbox\xrat@above=\hbox{\ensuremath{\scriptstyle #2}}%
  \pgfmathsetlengthmacro{\xrat@len}{max(\wd\xrat@below,\wd\xrat@above)+.6em}%
  \mathrel{\tikz [>->,baseline=-.75ex]
                 \draw (0,0) -- node[below=-2pt] {\box\xrat@below}
                                node[above=-2pt] {\box\xrat@above}
                       (\xrat@len,0) ;}}
\newcommand{\densesubsection}[1]{
\addtocounter{subsection}{1}
\medskip\noindent
% Really dense version:
%{\bf \arabic{section}.\arabic{subsection}\hspace{1em}#1.\ }%
% Looks more like proper subsection:
{\bf \arabic{section}.\arabic{subsection}\hspace{1em}#1\\[1.5mm]}%
}
\providecommand{\@fourthoffour}[4]{#4}
\def\fixstatement#1{%
  \AtEndEnvironment{#1}{%
    \xdef\pat@label{\csname #1name\endcsname~\@currentlabel}}}
\globtoksblk\prooftoks{1000}
\newcounter{proofcount}
  \edef\next{%
     \noexpand\subsection*{Proof of \pat@label}
     %\noexpand\begin{proof}%
     \unexpanded\expandafter{\BODY}}%
\def\printproofs{%
  \count@=\z@
  \loop
    \the\toks\numexpr\prooftoks+\count@\relax
    \ifnum\count@<\value{proofcount}%
    \advance\count@\@ne
  \repeat}
\newcommand{\takeout}[1]{\empty}
\newcommand{\Sd}{S}
\titlerunning{The Locally Finite Fixpoint and its Properties}
\authorrunning{S.~Milius, D.~Pattinson and T.~Wißmann}
\title{A New Foundation for Finitary Corecursion}
\author{
   Stefan Milius\inst{1}\fnmsep\thanks{Supported by Deutsche Forschungsgemeinschaft (DFG) under project MI~717/5-1}
   \and Dirk Pattinson\inst{2}
   \and Thorsten Wißmann\inst{1}\fnmsep${}^\star$
   }
\institute{Friedrich-Alexander-Universität Erlangen-Nürnberg
           \and The Australian National University}
\begin{document}
\maketitle
\begin{abstract}
\takeout{ % old abstract
We present a new approach to capture finitary behaviours of coalgebras based on
a new notion of local finiteness using finitely generated objects on locally
finitely presentable categories. These finite behaviours are captured by
fixpoint of the functor, we call it the \emph{locally finite fixpoint} (LFF),
which is a subcoalgebra of the final coalgebra. The LFF is characterized by
multiple universal properties: 1.~as the final \emph{locally finitely generated}
coalgebra; 2.~as the initial iterative algebra for equation systems.

With this new approach we can describe 1.~the context-free languages on $\Sigma$ by
a universal property, namely as the LFF of a lifting of $2×(-)^\Sigma$ to the category of $\Sigma$-pointed
idempotent semi-rings; 2.~the monad of Courcelle's algebraic trees by a
universal property, i.e.~as the LFF for an appropriate functor on the category
of pointed finitary monads.}% end takeout
\takeout{
% New abstract
This paper contributes to a uniform theory of the behaviour of ``finite-state''
systems. We propose that such systems are modeled as coalgebras with a finitely generated carrier for an
endofunctor on a locally finitely presentable category. Their behaviour gives rise to a new fixpoint of the coalgebraic type
functor called \emph{locally finite fixpoint} (LFF). We prove that if the given
endofunctor preserves monos the LFF always exists and is a subcoalgebra of the
final coalgebra (unlike the rational fixpoint previously studied by Ad\'amek,
Milius and Velebil). Moreover, we show that the LFF is characterized by two
universal properties: 1.~as the final locally finitely generated coalgebra, and
2.~as the initial fg-iterative algebra. As examples of LFF's we first obtain the
known instances of the rational fixpoint, e.g. regular languages, rational
streams and formal power-series, regular trees etc. And we obtain a number of
new examples, e.g.~context-free languages, context-free formal power-series (and
any other instance of the generalized powerset construction by
\citeauthor*{generalizeddeterminization}) and the monad of Courcelle's algebraic
trees.
} % 2nd takeout
This paper contributes to a theory of the behaviour of ``finite-state''
systems that is generic in the system type. We propose that such systems are modeled as coalgebras with a finitely generated carrier for an
endofunctor on a locally finitely presentable category. Their behaviour gives rise to a new fixpoint of the coalgebraic type
functor called \emph{locally finite fixpoint} (LFF). We prove that if the given
endofunctor preserves monomorphisms then the LFF always exists and is a subcoalgebra of the
final coalgebra (unlike the rational fixpoint previously studied by Ad\'amek,
Milius and Velebil). Moreover, we show that the LFF is characterized by two
universal properties: 1.~as the final locally finitely generated coalgebra, and
2.~as the initial fg-iterative algebra. As instances of the LFF we first obtain the
known instances of the rational fixpoint, e.g. regular languages, rational
streams and formal power-series, regular trees etc. And we obtain a number of
new examples, e.g.~(realtime deterministic resp.~non-deterministic) context-free
languages, constructively $S$-algebraic formal power-series (and any other
instance of the generalized powerset construction by
\citeauthor*{generalizeddeterminization}) and the monad of Courcelle's algebraic
trees.
\end{abstract}
\section{Introduction}

Coalgebras capture many types of state based system within a uniform and
mathematically rich framework \cite{Rutten:2000:UCT:abbrev}. One outstanding feature of
the general theory is \emph{final semantics} which gives a fully abstract
account of system behaviour. For example, coalgebraic modelling of deterministic
automata (without a finiteness restriction on state sets) yields the set of all
formal languages as a final model, and restricting to \emph{finite} automata one
precisely obtains the regular languages \cite{Rutten:1998:ACE}. This
correspondence has been generalized to locally finitely presentable
categories~\cite{adamek1994locally,gu71}, where \emph{finitely presentable}
objects play the role of finite sets, leading to the notion of \emph{rational
fixpoint} that provides final semantics to all models with finitely presentable
carrier~\cite{streamcircuits}. It is known that the rational fixpoint is fully
abstract (identifies all behaviourally equivalent states) as long as finitely
presentable objects agree with finitely generated objects in the base
category~\cite[Proposition~3.12]{bms13}. While this is the case in some
categories (e.g. sets, posets, graphs, vector spaces, commutative monoids), it
is currently unknown in other base categories that are used in the
construction of system models, for example in idempotent semirings (used in the treatment of
context-free grammars \cite{coalgcontextfree}), in algebras for the stack monad (used for modelling configurations of stack machines~\cite{coalgchomsky}); or it even fails, for example in the category of finitary monads on sets (used in the categorical study
of algebraic trees \cite{secondordermonad}), or Eilenberg-Moore categories for a
monad in general (the target category of generalized determinization
\cite{generalizeddeterminization}, in which the above examples live).
Coalgebras over a category of Eilenberg-Moore algebras
over \Set in particular provide a paradigmatic setting: automata that describe
languages beyond the regular languages consist of a finite state set, but their
transitions produce side effects such as the manipulation of a stack. These can
be described by a monad, so that the (infinite) set of system states (machine
states plus stack content) is described by a free algebra (for that monad) that is
generated by the finite set of machine states. This is formalized by the
generalized powerset construction \cite{generalizeddeterminization} and
interacts nicely with the coalgebraic framework we present.

Technically, the shortcoming of the rational fixpoint is due to the
fact that finitely presentable objects are not closed under
quotients, so that the rational fixpoint itself may fail to be a
subcoalgebra of the final coalgebra and so identifies too little
behaviour. The main conceptual contribution of this paper is the
insight that also in cases where finitely presentable and finitely
generated do not agree, the \emph{locally finite fixpoint} provides
a fully abstract model of finitely generated behaviour. We give a construction
of the locally finite fixpoint, and support our claim both by
general results and concrete examples: we show that under mild
assumptions, the locally finite fixpoint always exists, and is
indeed a subcoalgebra of the final coalgebra. Moreover, we give a
characterization of the locally finite fixpoint as the initial iterative
algebra. We then instantiate our results to several scenarios
studied in the literature.

First, we show that the locally finite fixpoint is universal (and fully
abstract) for the class of systems produced by the generalized powerset
construction over \Set: every determinized finite-state system
induces a
unique homomorphism to the locally finite fixpoint, and the latter contains precisely
the finite-state behaviours.

Applied to the coalgebraic treatment of
context-free languages, we show that the locally finite fixpoint
yields precisely the context-free languages, and real-time
deterministic context-free languages, respectively, when modelled
using algebras for the stack monad of \cite{coalgchomsky}.
For context-free languages weighted in a semiring $S$, or
equivalently for constructively $S$-algebraic 
power series \cite{Petre:2009:ASP}, the locally finite fixpoint
comprises precisely those, by phrasing the results of \citet{jcssContextFree}
in terms of the generalized powerset construction.
Our last example shows the applicability of our results beyond
categories of Eilenberg-Moore algebras over \Set, and we characterize the
monad of Courcelle's algebraic trees over a signature \cite{courcelle,secondordermonad} as the
locally finite fixpoint of an associated functor (on a category of monads),
solving an open problem of~\cite{secondordermonad}.

The work presented here is based on the third author's master thesis in
\cite{wissmann2015}.
\iffull
Most proofs are omitted; they can be found in the appendix.
\else
Most proofs are omitted; they can be found in the full version~\cite{mpw15} of our paper.
\fi

\begin{notes}
\noindent\emph{Related Work.} The characterization of languages in
terms of (co-)algebraic constructions has been carried out for
various examples, such as (weighted) context-free languages
\cite{Winter:2013:CCC,coalgchomsky} as well as regular languages
\cite{Rutten:1998:ACE} where characterization theorems were
established on a case-by-case basis. We show that the locally finite
fixpoint provides a more general, and conceptual account. 
We have already mentioned the
rational fixpoint~\cite{iterativealgebras,streamcircuits} that
serves a similar purpose and shares many technical similarities with
the locally finite fixpoint, introduced here. Many of the properties
of the rational fixpoint in fact hold, \emph{mutatis
mutandis}, also for locally finite fixpoint, cf.~\emph{op.cit.}
\end{notes}

\takeout{ % old introduction
When comparing different notions of computation in theoretical computer
science, the core aspect is that of the expressive power of a computation model:
\textqt{Which behaviours can I program?}. Of course, the incarnations of those
programs vary from model to model:
\begin{itemize}
\item For \emph{finite state automatons}, programs are state sets together with a state transition table.
\item For \emph{push-down automata}, programs are state sets together with a state transition table that is allowed to read data from or push data to a stack
\item For \emph{context-free grammars}, programs are non-terminal symbols together with production rules.
\item \emph{Recursive program schemes} over a signature of \textqt{givens} are systems
of mutally recursive equations involving terms of \textqt{givens}.
\end{itemize}
When it comes to expressiveness considerations, both all these theoretical
models and programming languages in practise share an important restriction:
the program must be of finite size.

A generic way of considering those kind of models is by coalgebras,
i.e.~morphisms $c: C\to FC$ in a category $\C$ and for an endofunctor $F: \C\to
\C$. The choice of $\C$ and $F$ determine the computation model and a concrete
coalgebra $C,c$ represents a concrete automaton, grammar, or program
repsectively. All the above examples have coalgebraic characterizations, and
are recalled later.

The collection of \emph{all} the possible behaviours $F$-coalgebras (regardless
on the size of $C$ or behaviour of $c$) is represented by the \emph{final}
$F$-coalgebra (if it exists). This means that the final coalgebra also contains
behaviours that can only be described by infinite programs -- recall for
example that any formal language is accepted by a deterministic automaton with
infinitely many states, namely its word automaton. Still, the final coalgebra
offers us a notion of equality between behaviours: two behaviours are
equivalent iff they are identified in the final coalgebra. E.g.~two formal
languages are the same iff they have the same word automaton.

In order to talk about finite automatons, \citet{iterativealgebras} developed
the rational fixpoint of a functor $F$: a coalgebra that contains the
behaviours of all coalgebras with a finitely presentable $C$. Here, finitely
presentable is a categorical generalization of finiteness. However, the
rational fixpoint lacked the second property of the final coalgebra: it does
not identify identical behaviours.

The background problem is that finitely presentable objects are not closed
under quotients in general.
} % end takeout
\takeout{ % newer introduction
When modelling computational phenomena by state-based systems or by a recursive
system of equations one usually aims for a finite representation, e.g.~in
automata theory one considers (non-deterministic, weighted, probabilistic etc.)
automata with a finite state set, context-free grammars are essentially a system
of recursive equations with finitely many variables (the non-terminals) and
linear systems are supposed to have a finite-dimensional state space. Another
example are regular and algebraic trees for a signature which arise as the
solutions of systems of finitely many recursive term equations and 1st-order
recursive program schemes, respectively (see~\cite{courcelle}). 

Coalgebras are known to capture many types of state-based systems and recursive
specifications within a uniform and mathematically pleasing general theory. To
model a certain type of systems one fixes an endofunctor $F$ on a category $\C$
describing the transition type of the class of systems of interest. One system
is then modeled as an $F$-coalgebra, i.e.~a morphism $S \to FS$ in $\C$ where
$S$ is the object of states of the system. Coalgebras for $F$ always come
(under mild assumptions on the type functor $F$) with a canonical domain of
behaviour, the \emph{final} $F$-coalgebra $\nu F$. Its universal property allows
to assign to every state of a coalgebra $S \to FS$ its ``behaviour'' via the
unique $F$-coalgebra homomorphism $S \to \nu F$, and this also gives rise to the
canonical notion of behavioural equivalence of two states. For example,
coalgebras for the functor $FX = \{0,1\} \times X^\Sigma$ where $\Sigma$ is a
finite input alphabet are precisely deterministic automata on $\Sigma$, and the
final coalgebra is carried by the set of all formal languages. The unique
homomorphism from an automaton (considered as an $F$-coalgebra) into the final
coalgebra assigns to every state of the automata the language it accepts. 

%\smnote{todo example, det.~autom.} \dots
But usually, one is interested in the behaviour of ``finite-state'' systems of a
certain type (for a example \emph{regular} languages over the alphabet
$\Sigma$). This behaviour is captured in the theory of coalgebras by the
\emph{rational fixpoint} of $\varrho F$ proposed by Ad\'amek, Milius and Velebil
(see~\cite{iterativealgebras,streamcircuits}). The idea is to take as the notion
of ``finite object'' (of states) the notion of a finitely presentable object in
a locally finitely presentable category (these categories were introduced by
Gabriel and Ulmer~\cite{gu71}, see also~\cite{adamek1994locally}). The theory of
the rational fixpoint is by now well-established; its parametrized version
originally arose as a coalgebraic aproach to the study of the semantics of
finitary recursive equations and Elgot's iterative theories
(see~\cite{iterativealgebras}), it lies at the heart of the coalgebraic study of
Bloom's and \'Esik's iteration theories~\cite{amv_em1,amv_em2}, and it plays a
central role in coalgebraic regular expression calculi and axiomatization of
expression equivalence~\cite{streamcircuits,bms13}. 

However, while in several concrete applications the rational fixpoint captures
precisely the intended regular behaviour this may fail in general. More
precisely, it is well-known that when the classes of finitely presentable (fp)
and finitely generated (fg) objects in the lfp category $\C$ coincide, then
$\varrho F$ is a subcoalgebra of $\nu F$ (see~\cite[Proposition~3.12]{bms13}). But
there are cases where $\varrho F$ does \emph{not} identify behaviourally equivalent
states (see~\cite[Proposition~3.15]{bms13}), and so $\varrho F$ fails to be the
desired domain of regular behaviour. Note also that while for some base
categories $\C$ (such as sets, posets, graphs, vector spaces, presheaves on
finite sets, commutative monoids, and semimodules for Noetherian semirings) it
is known that fp~and fg~objects coincide, in some categories this is known not
to hold (e.g.~groups, monoids, finitary set monads) and most often this is
unknown (e.g.~for many varieties of algebras such as idempotent semirings).
However, some of these categories feature as base categories in recent
coalgebraic work, e.g.~idempotent semirings for context-free grammars (see
Winter et al.~\cite{coalgcontextfree}) and finitary set monads for the study of
algebraic trees for a signature (see~\cite{secondordermonad}). 

Our contribution in this paper is a new approach to regular behaviour of
$F$-coalgebras based on taking fg~objects (in lieu of fp~ones) as ``finite
object'' of states. After recalling some preliminaries in
Section~\ref{sec:prelim}, we study in Section~\ref{sec:lff} the behaviour of all
coalgebras with an fg carrier. For a given finitary and monomorphism preserving
endofunctor $F$ on an lfp category $\C$ we define a coalgebra $\LFF F$ as the
filtered colimit of all fg carried $F$-coalgebras. We prove that $\LFF F$ is
always a subcoalgebra of the final coalgebra and a fixpoint of $F$ and call it
the locally finite fixpoint. Next two universal properties of $\LFF F$ are
proved: (1) $\LFF F$ is the final locally finitely generated (lfg) coalgebra
and, inverting its structure morphism, (2) the initial fg-iterative algebra for $F$
-- these universal properties are similar to the two universal properties of
the rational fixpoint. Under additional assumptions we prove that $\LFF F$ is
the image of $\varrho F$ in the final coalgebra, and whenever fp $=$ fg objects
then $\LFF F \cong \varrho F$. Section~\ref{sec:app} presents several
applications of the LFF: besides all previous applications of the rational
fixpoint we present context-free languages, context-free formal power series,
the behaviour of (non-deterministic) stack $T$-automata of Goncharov et
al.~\cite{coalgchomsky} and Courcelle's algebraic trees. The last application
solves an open problem from~\cite{secondordermonad} to characterize the monad of
algebraic trees by a universal property. Finally, Section~\ref{sec:con}
concludes the paper.

\takeout{ % Stoffsammlung
\begin{itemize}
\item Why Interesting? (machines in TCS always finite-state-machines +
side-effects, finitely many recursive equations, ...)
\item Existing Work? The rational fixpoints? Not a subcoalgebra, Image described
by universal property?
\item Question whether fp=fg is hard/tedious to answer, so we need a uniform
framework that works in both cases (and even if we don't know).
\end{itemize}
}%
} % takeout
\section{Preliminaries and Notation}
\label{sec:prelim}

\partitle{Locally finitely presentable categories.} 
A \emph{filtered colimit} is the colimit of a diagram
$\D \to \C$ where $\D$ is filtered (every finite
subdiagram has a cocone in $\D$) and \emph{directed} if $\D$ is additionally a
poset. \emph{Finitary functors} preserve filtered (equivalently
directed) colimits. Objects $C \in \C$ are \emph{finitely
presentable} (fp) if the hom-functor $\C(C, -)$ preserves filtered
(equivalently directed) colimits, and \emph{finitely generated}
(fg) if $\C(C, -)$ preserves directed colimits of monos (i.e.~colimits of
directed diagrams where all connecting morphisms are
monic). Clearly any fp object is fg, but not vice versa. Also, fg
objects are closed under strong epis (quotients) which fails for fp
objects in general.
A cocomplete category is \emph{locally finitely presentable} (lfp) if
the full subcategory $\C_\fp$ of finitely presentable objects is
essentially small, i.e.~is up to isomorphism only a set, and every object $C \in
\C$ is a filtered colimit of a diagram in $\C_\fp$. We refer
to~\cite{gu71,adamek1994locally} for further details.

It is well known that the categories of sets, posets and graphs are
lfp with finitely presentable objects precisely the finite
sets, posets, graphs, respectively.
The category of
vector spaces is lfp with finite-dimensional spaces being fp. Every
finitary variety is lfp (i.e.~an equational class of algebras induced by
finite-arity operations or equivalently the Eilenberg-Moore category for a
finitary \Set-Monad, see Section~4.1 %\ref{sec:powerset} 
later). The finitely
generated objects are the finitely generated algebras, and finitely presentable
objects are algebras specified by finitely many generators and relations. This
includes the categories of groups, monoids, (idempotent) semirings,
semi-modules, etc. Every lfp category has mono/strong epi
factorization~\cite[Proposition 1.16]{adamek1994locally}, i.e.~every $f$ factors
as $f = m \o e$ with $m$ mono (denoted by $\rightarrowtail$), $e$ strong epi
(denoted by $\twoheadrightarrow$), and we call the domain $\Im(f)$ of $e$
the \emph{image} of $f$. Any strong epi $e$ has the diagonal fill-in
property, i.e. $m \o g = h \o e$ with $m$ mono and $e$ strong epi gives a unique
$d$ such that $m \o d = h$ and $g = d \o e$.

\takeout{
  \twnotei{What's the point of this item? In my opinion, we are not
  interested in a sufficient criterion for fp=fg. Furthermore, we look at
  modules, but not necessarily of notherian rings. So I vote for dropping
  without substitution.} Modules for a Noetherian semiring. Recall that a (semi-)module for a
  semiring $\Sd$ is a commutative monoid $(M,+,0)$ together with an action of
  the semiring $\Sd$ on $M$ satisfying the usual distributive laws $r(m+n) =
  rm+rn$ and $r0 = 0$. Hence, modules for $\Sd$ form a finitary variety. In
  general the classes of fg modules and fp modules do not coincide. The semiring
  $\Sd$ is called \emph{Noetherian} if any submodule of a finitely generated
  module is itself finitely generated. For Noetherian semirings the classes of
  finitely generated and finitely presentable modules coincide (see
  e.g.~\cite[Prop.~2.6]{bms13} for a proof). There are also non-Noetherian
  semirings for which fp and fg modules coincide; e.g.~the module for the
  semiring of natural numbers for which modules are precisely the commutative
  monoids. 
}

\noindent
\partitle{Coalgebras.} If $H: \C \to \C$ is an endofunctor,
\emph{$H$-coalgebras} are pairs $(C, c)$ with $c: C \to HC$, and $C$
is the \emph{carrier} of $(C, c)$. Homomorphisms $f: (C, c) \to (D, d)$ are maps
$f: C \to D$ such that $Hf \o c = d \o f$. This gives a category denoted by
$\Coalg H$. If its final object exists then this final $H$-coalgebra $(\nu
H,\tau)$ represents a canonical domain of behaviours of $H$-typed systems, and
induces for each $(C,c)$ a unique homomorphism, denoted by $c^\dagger$, giving
semantics to the system $(C,c)$. The final coalgebra always exists provided $\C$
is lfp and $H$ is finitary. The forgetful functor $\Coalg H \to \C$ creates
colimits and reflects monos and epis. A morphism $f$ in $\Coalg H$ is
\emph{mono-carried} (resp.~\emph{epi-carried}) if the underlying morphism in
$\C$ is monic (resp.~epic). Strong epi/mono factorizations lift from $\C$ to
$\Coalg H$ whenever $H$ preserves monos yielding epi-carried/mono-carried
factorizations. A \emph{directed union of coalgebras} is the colimit of a directed diagram in
$\Coalg H$ where all connecting morphisms are mono-carried.

\noindent\partitle{The Rational Fixpoint.} For $\C$ lfp and $H: \C \to
\C$ finitary let $\Coalg_\fp H$ denote the full subcategory of
$\Coalg H$ of coalgebras with fp carrier, and $\Coalg_\lfp H$ the
full subcategory of $\Coalg H$ of coalgebras that arise as filtered
colimits of coalgebras with fp carrier~\cite[Corollary
III.13]{streamcircuits}. The coalgebras in $\Coalg_\lfp H$ are called \emph{lfp coalgebras} and for $\C = \Set$ those are precisely the locally finite coalgebras (i.e.~those coalgebras where every element is contained in a finite subcoalgebra). The final lfp coalgebra exists and is the
colimit of the inclusion $\Coalg_\fp H \hookrightarrow \Coalg H$,
and it is a fixpoint of $H$ (see~\cite{iterativealgebras}) called the \emph{rational fixpoint} of $H$. Here are some examples: the rational fixpoint of a polynomial set functor associated to a finitary signature $\Sigma$ is the set of rational
$\Sigma$-trees~\cite{iterativealgebras}, i.e.~finite and infinite
$\Sigma$-trees having, up to isomorphism, finitely many subtrees
only, and one obtains rational weighted languages for Noetherian semirings $S$ for a functor on the category of $S$-modules~\cite{bms13}, and rational $\lambda$-trees
for a functor on the category of presheaves on finite sets~\cite{highrecursion} or for a related functor on nominal sets~\cite{MiliusWissmannRatlambda}. 
If the classes of fp and fg objects coincide in $\C$, then the rational fixpoint
is a subcoalgebra of the final coalgebra~\cite[Theorem~3.12]{bms13}. This is the
case in the above examples, but not in general, see~\cite[Example~3.15]{bms13}
for a concrete example where the rational fixpoint does not identify
behaviourally equivalent states. Conversely, even if the classes differ, the
rational fixpoint can be a subcoalgebra, e.g.~for any constant functor.

\noindent\partitle{Iterative Algebras.} If $H: \C \to \C$ is an
endofunctor, an $H$-algebra $(A, a: HA \to A)$ is \emph{iterative}
if  every \emph{flat equation morphism} $e: X \to HX + A$ where $X$
is an fp object has a unique \emph{solution}, i.e.~if there
exists a unique $e^\dagger: X \to A$ such that $e^\dagger =
[a,\id_A] \o (He^\dagger + \id_A) \o e$. The rational
fixpoint is also characterized as the initial iterative algebra~\cite{iterativealgebras} and is the
starting point of the coalgebraic approach to Elgot's iterative
theories~\cite{elgot} and to the iteration theories of Bloom and
\'Esik~\cite{be,iterativealgebras,amv_em1,amv_em2}.

\takeout{
\paragraph{The Rational Fixpoint.} Before we present our main results on the
locally finite fixpoint in the next section let us now first recall some facts
about the rational fixpoint from~\cite{iterativealgebras,streamcircuits}. Assume
that $F: \C\to\C$ is a finitary endofunctor on an lfp category. Then one
considers all $F$-coalgebras with a finitely presentable carrier; they are
supposed to capture all systems with a ``finite'' objects of states, and they
form the full subcategory $\Coalgfp F\hookrightarrow \Coalg F$. We further
consider \emph{locally finitely presentable} (\emph{lfp}, for
short) $F$-coalgebras. On $\Set$, they capture precisely
locally finite coalgebras$\mathrlap{\text{,}}$\footnote{leading to the general notion \emph{lfp}
coalgebra, not to be confused with lfp category.} i.e.~those coalgebras where every element of the
carrier lies in a finite subcoalgebra. We do not recall the general definition
but recall from~\cite[Corollary III.13]{streamcircuits} that lfp coalgebras are
characterized as precisely those $F$-coalgebras arising as filtered colimits of
coalgebras from $\Coalgfp F$. It then follows that a final
lfp coalgebra $r: \varrho F \to F(\varrho F)$ exists and that it is constructed
as the colimit of the filtered diagram $\Coalgfp F \hookrightarrow \Coalg F$ of
\emph{all} fp-carried $F$-coalgebras. Furthermore, one can prove that $\varrho
F$ is a fixpoint of $F$ (see~\cite[Lemma~3.4]{iterativealgebras}). The ensuing
$F$-algebra $r^{-1}: F(\varrho F) \to \varrho F$ also has a universal property,
too. This algebra is the \emph{initial iterative} $F$-algebra, where an
$F$-algebra $a: FA \to A$ is called \emph{iterative} if every \emph{flat
equation morphism} $e: X \to FX + A$ where $X$ is an fp object has a unique
\emph{solution}, i.e.~given $e$ there exists a unique morphism $e^\dagger: X \to
A$ such that $e^\dagger = [a,A] \o (Fe^\dagger + A) \o e$. This universal
property of $\varrho F$ leads to more powerful finitary corecursion schemes,
solutions theorems and specification principles; this has been explored
e.g.~in~\cite{iterativealgebras,bmr12,mbmr13}. It has also been the starting
point of the coalgebraic approach to Elgot's iterative theories~\cite{elgot} and
also the iteration theories of Bloom and \'Esik~\cite{be}
(see~\cite{iterativealgebras,amv_em1,amv_em2}).

\begin{example}
Prominent examples of the rational fixpoint are:
\begin{enumerate}
\item For a signature functor $H_\Sigma$ on \Set, we get the rational
$\Sigma$-trees as $\varrho H_\Sigma$, i.e.~those $\Sigma$-trees with only
finitely many subtrees (up to isomorphism).
\item Rational weighted languages for Noetherian semirings.
\item Rational $\lambda$-trees. two times: in Nom and $\Set^{\mathcal F}$
\end{enumerate}
\end{example}

In all the above examples the rational fixpoint appears as a subcoalgebra of the
final coalgebra, that means that it collects precisely all behaviours of the
coalgebras from $\Coalgfp F$ modulo behavioural equivalence. It has been proved
that this happens whenever the classes of fp and fg objects coincide in $\C$
(see~\cite[Theorem~3.12]{bms3}). However, in arbitrary lfp categories (or
finitary varieties) this is sometimes not true (e.g.~in the categories of
groups, monoids, Heyting algebras or finitary monads on $\Set$), and most often
this seems to be unknown, e.g.~for idempotent semirings. Moreover, in those
cases where it is known the proofs tend to be rather non-trivial making use of
deep results in algebra or somewhat involved combinatorial arguments.\smnote{add
citations here?} And if the two classes of objects do not coincide it may happen
that the rational fixpoint is \emph{not} a subcoalgebra of the final coalgebra,
i.e.~there exist behaviourally equivalent states that are not identified in
$\varrho F$ (see~\cite[Example~3.15]{bms13} for a concrete example). 
} % takeout

\section{The Locally Finite Fixpoint}

The locally finite fixpoint can be characterized similarly to the
rational fixpoint, but with respect to coalgebras with finitely
generated (not finitely presentable) carrier. We show that the
locally finite fixpoint always exists, and is a subcoalgebra of the
final coalgebra, i.e. identifies all behaviourally equivalent
states. As a consequence, the locally finite fixpoint provides a
fully abstract notion of finitely generated behaviour. From now on, we rely on
the following:

\label{sec:lff}
\begin{assumption} \label{basicassumption}
    Throughout the rest of the paper we assume that $\C$ is an lfp category and
    that $H:\C→\C$ is finitary and preserves monomorphisms.
\end{assumption}

\noindent
As for the rational fixpoint, we denote the full subcategory of
$\Coalg H$ comprising all coalgebras with finitely generated
carrier by $\Coalgfg H$ and have the following notion of locally finitely
generated coalgebra.

%\subsection{Locally Finitely Generated Coalgebras}

\begin{definition}
    \label{lfgcoalgebra}
    A coalgebra $X\xrightarrow{x} HX$ is called \emph{locally finitely
    generated (lfg)} if for all $f: S\rightarrow
    X$ with $S$ finitely generated, there exist a coalgebra $p: P
    \to HP$ in $\Coalgfg H$, a coalgebra morphism $h: (P,p) \rightarrow (X,x)$ and some $f': S\rightarrow P$ such that $h \o f' = f$. 
    %\[
    %    \begin{tikzcd}
    %        S \arrow[->]{rr}{f} \arrow[->]{dr}[below left]{f'} & {} & X \\
    %        {} & P\arrow[->]{ur}[below right]{h}
    %        \descto{u}{\circlearrowleft}
    %    \end{tikzcd}
    %\]
    $\Coalglfg H\subseteq \Coalg H$ denotes the full subcategory of lfg coalgebras.
\end{definition}
%
%\noindent
Equivalently, one can characterize lfg coalgebras in terms of subobjects and subcoalgebras,
making it a generalization of of \emph{local finiteness} in \Set, i.e.~the property of a coalgebra that every element is contained in a finite subcoalgebra. 
\begin{lemma}
    $X\xrightarrow{x} HX$ is an lfg coalgebra iff for all 
    fg subobjects $S\smash{\,\xrightarrowtail{f}\,} X$, there exist
    a subcoalgebra $h: (P,p) \rightarrowtail (X,x)$ and a mono $f':
    S\rightarrowtail P$ with~$h\cdot f' = f$,
    i.e.~$S$ is a subobject of $P$. 
\end{lemma}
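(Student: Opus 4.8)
The plan is to prove the two implications of the stated equivalence separately, exploiting the mono/strong-epi factorization system and the fact (from the Preliminaries) that $H$ preserves monos, so that factorizations lift from $\C$ to $\Coalg H$.

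First I would handle the direction \emph{lfg $\Rightarrow$ the subobject condition}. Suppose $X\xrightarrow{x} HX$ is lfg and let $f\colon S\rightarrowtail X$ be an fg subobject. By Definition~\ref{lfgcoalgebra} applied to this $f$ (which in particular is \emph{some} morphism $S\to X$ with $S$ finitely generated) we obtain $(P,p)\in\Coalgfg H$, a coalgebra morphism $h\colon(P,p)\to(X,x)$ and $f'\colon S\to P$ with $h\cdot f' = f$. Now $h$ need not be mono-carried, so I would factor $h$ in $\Coalg H$ as $h = \bar h\cdot e$ with $e\colon (P,p)\twoheadrightarrow (\bar P,\bar p)$ epi-carried and $\bar h\colon(\bar P,\bar p)\rightarrowtail (X,x)$ mono-carried (this lift of the factorization exists because $H$ preserves monos). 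Since fg objects are closed under strong epis (quotients), $\bar P$ is again finitely generated, so $(\bar P,\bar p)$ is a subcoalgebra of $(X,x)$ with fg carrier. It remains to replace $f'$ by a mono. Consider $e\cdot f'\colon S\to\bar P$; then $\bar h\cdot(e\cdot f') = h\cdot f' = f$ is monic, and since $\bar h$ is monic, $e\cdot f'$ is monic as well (a morphism whose post-composite with a mono is monic is itself monic). So $f'' := e\cdot f'\colon S\rightarrowtail\bar P$ is the desired mono and $\bar h\cdot f'' = f$, establishing that $S$ is a subobject of the subcoalgebra $\bar P\rightarrowtail X$.

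For the converse \emph{subobject condition $\Rightarrow$ lfg}, I would start from an arbitrary $f\colon S\to X$ with $S$ finitely generated (not necessarily monic) and factor it in $\C$ as $f = m\cdot e$ with $e\colon S\twoheadrightarrow \bar S$ strong epi and $m\colon\bar S\rightarrowtail X$ mono; then $\bar S$ is fg (closed under quotients), so by hypothesis there is a subcoalgebra $h\colon(P,p)\rightarrowtail(X,x)$ with $P$ fg and a mono $g\colon\bar S\rightarrowtail P$ with $h\cdot g = m$. Taking $f' := g\cdot e\colon S\to P$ gives $h\cdot f' = h\cdot g\cdot e = m\cdot e = f$, which is exactly the condition required by Definition~\ref{lfgcoalgebra}. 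Hence $(X,x)$ is lfg.

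The only point requiring a little care — and the closest thing to an obstacle — is the lifting of the mono/strong-epi factorization from $\C$ to $\Coalg H$ and the preservation of finite generation under the resulting quotient; but both are already recorded in the Preliminaries (factorizations lift when $H$ preserves monos, and fg objects are closed under strong epis), so the proof is essentially a diagram-chase assembling these facts. I would present it compactly, perhaps with one commutative square for each direction, rather than spelling out every fill-in.
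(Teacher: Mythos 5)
Your proof is correct and follows essentially the same route as the paper: ($\Rightarrow$) factor the morphism $h$ provided by the lfg property into a strong epi-carried followed by a mono-carried coalgebra morphism (using that $H$ preserves monos) and invoke closure of fg objects under strong quotients; ($\Leftarrow$) image-factorize $f$ and apply the hypothesis to the resulting fg subobject. The only cosmetic difference is that where the paper appeals to the diagonal fill-in in the converse direction, you get the required $f'\colon S\to P$ simply by composing with the strong epi, which works just as well.
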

\begin{proof}
    ($\Rightarrow$) Given some mono $f: S\rightarrowtail X$,
    factor the induced $h$ into some strong epi-carried and mono-carried homomorphisms
    and use that fg objects are closed under strong epis. 
    ($\Leftarrow$) Factor $f: S\to X$ into an epi and a mono $g: \Im(f)
    \rightarrowtail X$ and use the diagonal fill-in property for $g$.
    \qed
\end{proof}

\noindent
Evidently all coalgebras with finitely generated carriers are lfg.
Moreover, lfg coalgebras are precisely the filtered colimits of coalgebras from $\Coalgfg H$. 

\begin{proposition}\label{prop:lfgcolim}
    Every filtered colimit of coalgebras from $\Coalgfg H$ is lfg.
\end{proposition}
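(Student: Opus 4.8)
The plan is to verify the defining property of lfg coalgebras directly for an arbitrary filtered colimit. So let $(X,x) = \colim_{i \in \mathcal D} (X_i, x_i)$ be a filtered colimit of a diagram in $\Coalgfg H$, with colimit injections $c_i \colon (X_i,x_i) \to (X,x)$, and let $f \colon S \to X$ be a morphism with $S$ finitely generated. I must produce a coalgebra $(P,p)$ in $\Coalgfg H$, a homomorphism $h \colon (P,p) \to (X,x)$, and a factorization $f = h \circ f'$.

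First I would use the hypothesis that each $X_i$ is finitely generated to invoke a well-known fact about lfp categories: a filtered colimit of finitely generated objects, while not necessarily a colimit of monos, still has the property that its colimit injections are jointly "covering" in the sense that every finitely generated subobject factors through some injection — more precisely, one first replaces the colimit by a filtered colimit whose connecting morphisms are strong epis onto the images, or one appeals to the fact that in an lfp category any filtered colimit can be refined so that finitely generated objects mapping in factor through an injection. Concretely, I would factor $f$ as a strong epi $e \colon S \twoheadrightarrow \Im(f)$ followed by a mono $m \colon \Im(f) \rightarrowtail X$; since strong epis preserve finite generation, $\Im(f)$ is again finitely generated, so it suffices to factor $m$. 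The forgetful functor $\Coalg H \to \C$ creates filtered colimits, so $X$ is the same filtered colimit in $\C$, and because $\Im(f)$ is finitely generated we want a factorization of $m$ through some $c_i$. The subtle point is that finite generation only guarantees preservation of \emph{directed colimits of monos}, whereas the connecting maps $X_i \to X_j$ need not be monic. I would handle this exactly as in the analogous result for the rational fixpoint: take the (pointwise) images of the connecting morphisms to build a directed diagram of monos with the same colimit $X$, so that the injection $c_i$ factors (up to the quotient) appropriately, and then $\Im(f)$, being fg, factors through one of these. This yields some $i$ and a map $g \colon \Im(f) \to X_i$ with $c_i \circ g$ equal to $m$.

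Then I would set $(P,p) := (X_i, x_i)$, which lies in $\Coalgfg H$ by assumption, $h := c_i$, which is a coalgebra homomorphism into $(X,x)$, and $f' := g \circ e \colon S \to X_i$. This gives $h \circ f' = c_i \circ g \circ e = m \circ e = f$, as required, so $(X,x)$ is lfg. For the equivalent subobject formulation via the preceding Lemma, one would instead factor the homomorphism $h$ into a strong-epi-carried followed by a mono-carried homomorphism and use closure of fg objects under strong epis to keep the carrier finitely generated — but for the Proposition as stated the above suffices.

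The main obstacle is precisely the non-monicity of the connecting morphisms in a filtered colimit of fg objects: one cannot directly apply the definition of "finitely generated" (which only sees directed colimits of monos) to conclude that $\Im(f)$ factors through a colimit injection. The fix — passing to the diagram of images of the connecting maps, which \emph{is} a directed diagram of monos with the same colimit, and transporting the factorization back — is the technical heart of the argument and mirrors the standard treatment of the corresponding fact for lfp coalgebras and the rational fixpoint.
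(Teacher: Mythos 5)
Your overall strategy is the paper's: reduce the filtered colimit to a directed diagram of monos by taking images, and then use finite generation of $\Im(f)$. But the final step as written is wrong. After passing to the images $T_i := \Im(c_i)$ of the colimit injections (note it is the injections $c_i\colon X_i \to X$, not the connecting morphisms, whose images form a directed diagram of subobjects of $X$, with directedness obtained by diagonal fill-in), finite generation of $\Im(f)$ only yields a factorization $g\colon \Im(f)\to T_i$ with $m_i\cdot g = m$, where $m_i\colon T_i\rightarrowtail X$. Your claim that this ``yields a map $g\colon \Im(f)\to X_i$ with $c_i\cdot g = m$'' does not follow: to get back to $X_i$ you would have to lift $g$ along the strong epi $e_i\colon X_i\twoheadrightarrow T_i$, which needs projectivity of $\Im(f)$ and fails in general --- indeed, a morphism from a merely fg (not fp) object into a filtered colimit need not factor through any colimit injection at all; that is precisely the fp/fg distinction. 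So the choice $(P,p):=(X_i,x_i)$, $h:=c_i$ is not justified.

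The repair is short and is exactly what the paper does: take $(P,p):=(T_i,t_i)$ itself. Since $H$ preserves monos, the (strong epi, mono) factorization of $c_i$ lifts to $\Coalg H$, so $T_i$ carries a coalgebra structure $t_i$ making $e_i$ and $m_i$ coalgebra homomorphisms; and $T_i$ is finitely generated because fg objects are closed under strong quotients --- the fact you mention only in your closing remark about the subobject formulation is in fact needed in the main argument. Then $f':=g\cdot e\colon S\to T_i$ satisfies $m_i\cdot f' = f$, witnessing that $(X,x)$ is lfg. One must also actually verify that $X$ is the directed union of the $T_i$ (so that fg-ness of $\Im(f)$ applies); the paper isolates this as a separate lemma on images of colimits, applied with the identity on $X$, rather than leaving it as an appeal to a ``well-known fact''.
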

\begin{proof}[\iffull Sketch; for the full proof see the appendix\else Sketch\fi]
  One first proves that directed unions of coalgebras from $\Coalgfg H$ are lfg.
  Now given a filtered colimit $c_i: X_i \to C$ where $X_i$ are coalgebras in
  $\Coalgfg H$, one epi-mono factorizes every colimit injection: $c_i =
  (\!\!\smash{{\begin{tikzcd}[column sep=5mm]
  X_i \arrow[->>,yshift=-1pt]{r}{e_i\,} \pgfmatrixnextcell T_i \arrow[>->,yshift=-1pt]{r}{m_i}
  \pgfmatrixnextcell C
  \end{tikzcd}}}\!\!)$. Using the diagonalization of the
  factorization one sees that the $T_i$ form a directed diagram of subobjects of
  $C$. Furthermore $C$ is the directed union of the $T_i$ and therefore an lfg
  coalgebra as desired.  
%    Factor the colimit injections, obtaining subcoalgebras with a fg carrier and
%    connecting mono-carried coalgebra homomorphisms. In total: a directed union,
%    that is by construction cofinal. By the definition of lfg coalgebra and fg
%    object, the colimit of a directed union is lfg.
    \qed
\end{proof}
\begin{proposition} \label{lfgdirectedunion}
    Every lfg coalgebra $(X,x)$ is a directed colimit of its subcoalgebras from
    $\Coalgfg H$.
\end{proposition}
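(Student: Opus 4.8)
The plan is to show directly that the family $\mathcal{S}$ of all subcoalgebras of $(X,x)$ whose carrier is finitely generated, ordered by inclusion, forms a directed diagram whose colimit in $\Coalg H$ is $(X,x)$.

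First I would note that $\mathcal{S}$ is in fact a sub-poset of the poset of finitely generated subobjects of $X$: since $H$ preserves monos, a mono $h\colon P\rightarrowtail X$ carries at most one coalgebra structure $p$ making $h$ a homomorphism (as $Hh\o p = x\o h$ and $Hh$ is monic), and likewise, whenever $(P,p),(Q,q)\in\mathcal{S}$ with $P\le Q$ as subobjects of $X$, the comparison mono $P\rightarrowtail Q$ is automatically an $H$-coalgebra homomorphism (compose the relevant square with the mono $Hh_Q$ and cancel). Hence the whole claim reduces to a statement about finitely generated subobjects of $X$.

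Next I would invoke the standard fact about lfp categories \cite{adamek1994locally} that $X$ is the directed union of its finitely generated subobjects, and moreover that any fg subobject of $X$ factors through a member of any given directed diagram of monos with colimit $X$: writing $X=\colim_i X_i$ as a directed colimit of finitely presentable objects and factoring each colimit injection as $X_i\twoheadrightarrow T_i\rightarrowtail X$, diagonal fill-in turns the images $T_i$ into a directed diagram of fg subobjects with colimit $X$, and an fg subobject $S\rightarrowtail X$ factors through some $T_i$ because $\C(S,-)$ preserves this colimit. Now the lfg property of $(X,x)$, in the subcoalgebra form of the Lemma above, says precisely that every fg subobject $S\rightarrowtail X$ is contained in the carrier of some coalgebra in $\mathcal{S}$; in other words $\mathcal{S}$ is cofinal in the (directed) poset of fg subobjects of $X$. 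Cofinality then gives at once that $\mathcal{S}$ is itself directed — given carriers $P_1,P_2$ of members of $\mathcal{S}$, choose an fg subobject $S$ above both and then a member of $\mathcal{S}$ above $S$ — and that the colimit of $\mathcal{S}$ in $\C$ agrees with the directed union of all fg subobjects of $X$, namely $X$.

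Finally, since the forgetful functor $\Coalg H\to\C$ creates colimits, the colimit of $\mathcal{S}$ in $\Coalg H$ is carried by $X$ with the unique coalgebra structure compatible with the cocone of inclusions, which must be $x$; thus $(X,x)=\colim\mathcal{S}$, a directed colimit of its subcoalgebras from $\Coalgfg H$. I expect no genuine obstacle: the only ingredients beyond routine bookkeeping are the presentation of $X$ as the directed union of its fg subobjects together with the attendant cofinality property, and the one-line diagram chase that comparison monos between subcoalgebras of $(X,x)$ are homomorphisms — both relying only on $\C$ being lfp and $H$ preserving monos. In effect the proposition is the Lemma above (upgrading fg subobjects to fg subcoalgebras, cofinally) combined with a standard fact about lfp categories.
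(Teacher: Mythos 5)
Your proof is correct and follows essentially the same route as the paper: present $X$ as the directed colimit of its finitely generated subobjects and observe that, by the lfg property in its subcoalgebra form, the fg-carried subcoalgebras form a cofinal subdiagram, so their colimit is again $(X,x)$. The extra details you supply (uniqueness of coalgebra structures on subobjects, comparison monos being homomorphisms, creation of colimits by the forgetful functor) are routine verifications left implicit in the paper's argument.
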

\begin{proof}
    Recall from \cite[Proof I of Theorem 1.70]{adamek1994locally} that $X$ is
    the colimit of the diagram of all its finitely generated subobjects. Now the subdiagram given by all subcoalgebras of $X$ is cofinal. Indeed, this follows directly from the fact that $(X,x)$ is an lfg coalgebra: for every
    subobject $S\rightarrowtail X$, $S$ fg, we have a subcoalgebra of $(X,x)$
    in $\Coalgfg H$ containing $S$.
    \qed
\end{proof}

\begin{corollary}\label{lfgcharacterization}
The lfg coalgebras are precisely the filtered colimits, or
equivalently directed unions, of coalgebras with fg~carrier.
\end{corollary}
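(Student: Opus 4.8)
The plan is to deduce the corollary purely formally from Propositions~\ref{prop:lfgcolim} and~\ref{lfgdirectedunion}, by chasing a cycle of inclusions between the three relevant classes of coalgebras (taken up to isomorphism in $\Coalg H$): the directed unions of coalgebras from $\Coalgfg H$, the filtered colimits of coalgebras from $\Coalgfg H$, and the lfg coalgebras. Showing that these three classes coincide is exactly the content of the corollary, including the \emph{equivalently} in its statement.

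First I would observe that every directed poset is in particular a filtered category, so a directed union of coalgebras from $\Coalgfg H$ is a fortiori a filtered colimit of coalgebras from $\Coalgfg H$; this gives the inclusion of the first class into the second. Next, Proposition~\ref{prop:lfgcolim} says precisely that every filtered colimit of coalgebras from $\Coalgfg H$ is lfg, i.e.~the second class is contained in the third. Finally, Proposition~\ref{lfgdirectedunion} states that an lfg coalgebra $(X,x)$ is the directed colimit of the diagram of all its subcoalgebras lying in $\Coalgfg H$; since the connecting morphisms of that diagram are subcoalgebra inclusions, hence mono-carried, this directed colimit is by definition a directed union. Thus the third class is contained in the first, closing the cycle and forcing all three classes to be equal.

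The argument involves no substantial obstacle; the one point I would make explicit (it is already implicit in the proof of Proposition~\ref{lfgdirectedunion}) is that the subdiagram of subcoalgebras of $(X,x)$ used there is genuinely directed — it is cofinal in the diagram of all fg subobjects of $X$, which is directed by \cite[Proof~I of Theorem~1.70]{adamek1994locally} — and that all of its connecting morphisms are mono-carried, so that \textqt{directed colimit of subcoalgebras} really does mean \textqt{directed union}. With this in hand the squeeze \textqt{directed union $\Rightarrow$ filtered colimit $\Rightarrow$ lfg $\Rightarrow$ directed union} shows that restricting from arbitrary filtered colimits to directed unions, and from arbitrary connecting morphisms to mono-carried ones, does not shrink the class, which justifies the equivalence asserted in the corollary.
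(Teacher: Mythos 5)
Your proof is correct and is exactly the intended argument: the paper states the corollary as an immediate consequence of Propositions~\ref{prop:lfgcolim} and~\ref{lfgdirectedunion}, and the cycle \textqt{directed union $\Rightarrow$ filtered colimit $\Rightarrow$ lfg $\Rightarrow$ directed union} (with the observation that subcoalgebra inclusions are mono-carried, so the colimit in Proposition~\ref{lfgdirectedunion} is a directed union) is precisely how these two results combine. Nothing is missing.
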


\noindent
As a consequence, a coalgebra is final in $\Coalglfg F$ if there is
a unique morphism from every coalgebra with finitely generated carrier.

\begin{proposition}\label{finalforfg}
    An lfg coalgebra $L$ is final in $\Coalglfg H$ iff 
    for every for every coalgebra $X$ in $\Coalgfg H$ there exists a unique coalgebra morphism from $X$ to $L$. 
\end{proposition}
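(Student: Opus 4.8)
The plan is to prove the two implications separately; the forward direction is immediate, and all the content is in the converse. Note that on both sides $L$ is assumed to be an lfg coalgebra, so "$L$ is final in $\Coalglfg H$" already presupposes $L \in \Coalglfg H$; what needs proving is the universal mapping property.

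\textbf{($\Rightarrow$).} Every coalgebra with finitely generated carrier is lfg (as observed just before Proposition~\ref{prop:lfgcolim}), so if $L$ is final in $\Coalglfg H$ then restricting the unique-morphism property to the subcategory $\Coalgfg H$ gives, for each $X$ in $\Coalgfg H$, a unique coalgebra morphism $X \to L$.

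\textbf{($\Leftarrow$).} Suppose every $X$ in $\Coalgfg H$ admits a unique coalgebra morphism to $L$; I must produce, for an arbitrary lfg coalgebra $(Y,y)$, a unique coalgebra morphism $Y \to L$. By Proposition~\ref{lfgdirectedunion} (equivalently Corollary~\ref{lfgcharacterization}), $(Y,y)$ is the directed colimit in $\Coalg H$ of the diagram $D$ of its subcoalgebras $(Y_i,y_i) \in \Coalgfg H$, with colimit injections $\mathrm{in}_i \colon Y_i \rightarrowtail Y$. For each $i$ the hypothesis yields a unique coalgebra morphism $h_i \colon Y_i \to L$. These form a cocone over $D$: for any connecting morphism $c_{ij} \colon Y_i \to Y_j$ of $D$, both $h_j \cdot c_{ij}$ and $h_i$ are coalgebra morphisms $Y_i \to L$ with domain in $\Coalgfg H$, hence equal by uniqueness. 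Since the forgetful functor $\Coalg H \to \C$ creates colimits (recalled in the preliminaries), $(Y,y) = \colim D$ as a coalgebra, so this cocone induces a coalgebra morphism $h \colon Y \to L$ with $h \cdot \mathrm{in}_i = h_i$ for all $i$. For uniqueness, let $g \colon Y \to L$ be any coalgebra morphism; then $g \cdot \mathrm{in}_i \colon Y_i \to L$ is a coalgebra morphism out of an object of $\Coalgfg H$, so $g \cdot \mathrm{in}_i = h_i = h \cdot \mathrm{in}_i$ by the hypothesis, and since the colimit cocone $(\mathrm{in}_i)_i$ is jointly epic we conclude $g = h$.

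The only points needing care are that the diagram of finitely generated subcoalgebras of $(Y,y)$ is genuinely directed with colimit $(Y,y)$ — which is exactly Proposition~\ref{lfgdirectedunion} — and that colimits of coalgebras are computed on carriers, so gluing the $h_i$ produces a bona fide coalgebra morphism. I do not anticipate a real obstacle: the essential mechanism is simply to present the lfg coalgebra as a directed union of its fg subcoalgebras and glue together the unique maps guaranteed on that subcategory.
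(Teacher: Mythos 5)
Your proof is correct and matches the paper's own argument: the appendix proof likewise treats the forward direction as immediate from $\Coalgfg H \subseteq \Coalglfg H$, and for the converse expresses the lfg coalgebra as the directed union of its fg-carried subcoalgebras (Proposition~\ref{lfgdirectedunion}), glues the unique morphisms into a cocone using the uniqueness hypothesis, and gets uniqueness of the induced map from the colimit injections being jointly epic. Your explicit remarks about the connecting morphisms and the forgetful functor creating colimits just spell out details the paper leaves implicit.
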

The proof is analogous to \cite[Theorem 3.14]{streamcircuits}; the
full argument can be found in\iffull\ the Appendix.\else~\cite{mpw15}.\fi\ Cocompleteness of $\C$
ensures that the final lfg coalgebra always exists.
\begin{theorem}\label{thm:final}
    The category $\Coalglfg H$ has a final object, and the final
    lfg coalgebra is the colimit of the inclusion $\Coalgfg H \hookrightarrow \Coalglfg H$.
\end{theorem}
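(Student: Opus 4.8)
The plan is to show that the colimit of the inclusion $\Coalgfg H \hookrightarrow \Coalg H$ exists, lands in $\Coalglfg H$, and is final there. First I would argue the colimit exists: the category $\Coalgfg H$ is essentially small (its carriers come from the essentially small $\C_\fp$, hence from an essentially small class of fg objects, and there is only a set of coalgebra structures on each), and $\Coalg H$ is cocomplete because the forgetful functor $\Coalg H \to \C$ creates colimits and $\C$ is cocomplete. So let $(L,\lff)$ denote this colimit with injections $q_X\colon X \to L$ for $X \in \Coalgfg H$. Since $\Coalgfg H$ is a filtered category — indeed it has finite colimits, as fg objects are closed under finite colimits and $\Coalg H$ has them — the colimit is filtered, so by Proposition~\ref{prop:lfgcolim} the coalgebra $(L,\lff)$ is lfg, i.e.\ it lives in $\Coalglfg H$.

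Next I would verify finality using Proposition~\ref{finalforfg}: it suffices to show that for every $X \in \Coalgfg H$ there is a \emph{unique} coalgebra morphism $X \to L$. Existence is immediate — take the colimit injection $q_X$. For uniqueness, suppose $g\colon X \to L$ is any coalgebra morphism. The key point is that $X$, being fg, is a finitely generated object, so it cannot be ``spread out'' over the filtered colimit: because the forgetful functor creates the filtered colimit $L = \colim X_i$ in $\C$ and $X$ is fg (hence its hom-functor preserves the relevant colimits, or at least $\id_X$ factors through some injection up to the filtered identifications), $g$ factors as $g = q_Y \cdot g'$ for some $Y \in \Coalgfg H$ and some morphism $g'$ in $\C$ — and one checks $g'$ is a coalgebra morphism, or more simply works with the factorization directly. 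Then both $g$ and $q_X$ sit in the cocone, and comparing them via the diagram morphism $X \to Y$ in $\Coalgfg H$ given by $g'$ (or a zig-zag through a common upper bound), the colimit injection equations force $g = q_X$. I would run this argument carefully through $\Coalgfg H$'s filteredness so that the two factorizations can be merged at a single stage.

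The main obstacle I expect is the uniqueness step — specifically, justifying cleanly that an arbitrary coalgebra morphism $g\colon X \to L$ out of an fg-carried coalgebra factors through the colimit cocone \emph{as a coalgebra morphism} and at a stage where it can be compared with $q_X$. Two subtleties arise: (i) fg objects preserve directed colimits \emph{of monos}, not arbitrary filtered colimits, so one should either first reduce $L$ to a directed union of its fg subcoalgebras (available via Corollary~\ref{lfgcharacterization} / Proposition~\ref{lfgdirectedunion}) before factoring, or invoke that $X$ is in fact fp when that is the case — but here the cleanest route is to use that $\Coalgfg H$ itself is the diagram, so $g$ composed with nothing is already ``in the cocone'' and finality reduces to the universal property of the colimit once we know $g$ arises from the diagram; (ii) one must confirm the factoring morphism is a homomorphism, which follows from $H$ preserving the relevant colimit and the injections being jointly epic. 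Since this is entirely parallel to the rational-fixpoint case (\cite[Theorem~3.14]{streamcircuits} and \cite{iterativealgebras}), with $\fg$ in place of $\fp$, I would present it by citing that analogy and spelling out only the points where closure under quotients of fg objects (rather than fp objects) is what makes the argument go through.
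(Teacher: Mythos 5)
Your existence and ``the colimit is lfg'' steps are fine (essential smallness of $\Coalgfg H$, creation of colimits by the forgetful functor, filteredness of $\Coalgfg H$, \autoref{prop:lfgcolim}). The genuine gap is in the uniqueness half of the finality argument, and your own hedging does not close it. Factoring an arbitrary coalgebra morphism $g\colon (X,x)\to (L,\ell)$ through the defining filtered colimit is exactly what fg-ness does not license, as you note. Your repair (i) -- first present $L$ as the directed union of its fg-carried subcoalgebras via \autoref{lfgdirectedunion} and factor through that -- does give $g=m\cdot g'$ with $m\colon (P,p)\rightarrowtail(L,\ell)$ a subcoalgebra and $g'$ a coalgebra morphism (monicity of $Hm$ is what makes $g'$ a homomorphism), but it does not finish the proof: to deduce $g=q_X$ from the cocone identities you must know that the inclusion $m$ \emph{is} the colimit injection $q_P$ of the defining diagram, and that is itself an instance of the uniqueness you are trying to prove -- so the argument as written is circular, as is the remark that ``$g$ is already in the cocone once we know it arises from the diagram.'' The hole can be patched: present $L$ as the directed union of the \emph{images} $T_i=\Im(q_{X_i})$ of the colimit injections (\autoref{unionsofimages} with $f=\id$, exactly as in the proof of \autoref{prop:lfgcolim}); for this particular presentation the inclusions do agree with the injections, since $q_{T_i}\cdot e_i=q_{X_i}=m_i\cdot e_i$ with $e_i$ epic forces $m_i=q_{T_i}$, and then $g=m_i\cdot g'=q_{T_i}\cdot g'=q_X$ because $g'\colon (X,x)\to(T_i,t_i)$ is a morphism of $\Coalgfg H$. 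Without some such identification, the analogy with the fp case breaks precisely at the point where fp-ness was used.

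Note also that the paper's own proof is quite different and sidesteps this issue entirely: it never verifies the hypothesis of \autoref{finalforfg}. Instead it observes, using \autoref{lfgcharacterization}, that the colimit of the inclusion $\Coalgfg H\hookrightarrow\Coalglfg H$ coincides with the colimit of the entire category $\Coalglfg H$ (i.e.\ of its identity diagram), and a colimit of the whole category is automatically its final object (the cocone component at the colimit itself is forced to be the identity, so every morphism into it equals the cocone component at its domain). The same ingredients you invoke -- directed unions of fg subcoalgebras, factorization of fg carriers through directed unions, preservation of monos by $H$ -- are what justify that identification of colimits, but packaged this way the stage-identification problem your route runs into never arises. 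So: right skeleton and the obstacle correctly diagnosed, but the uniqueness step needs the fix above (or the paper's argument) to count as a proof.
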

\begin{proof}
    By \autoref{lfgcharacterization}, the colimit of the inclusion $\Coalgfg H
    \hookrightarrow \Coalglfg H$ is the same as the colimit of the entire
    $\Coalglfg H$. And the latter is clearly the final object of $\Coalglfg H$.\qed
\end{proof}
This theorem provides a construction of the final lfg coalgebra collecting
precisely the behaviours of the coalgebras with fg carriers. In the following we
shall show that this construction does indeed identify precisely behaviourally equivalent states, i.e.~the final lfg coalgebra is always a subcoalgebra of the final coalgebra. 
Just like fg objects are closed under quotients -- in contrast to fp objects -- we have
a similar property of lfg coalgebras:
\begin{lemma}\label{lfgquotients}
    Lfg coalgebras are closed under strong quotients, i.e.~for every strong epi carried
    coalgebra homomorphisms $X \twoheadrightarrow Y$, if $X$ is lfg then so is $Y$.
\end{lemma}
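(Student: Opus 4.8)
The plan is to exhibit $Y$ as a directed union of coalgebras with finitely generated carrier and then invoke \autoref{lfgcharacterization} (equivalently \autoref{prop:lfgcolim}). Let $q\colon X\twoheadrightarrow Y$ be the given strong‑epi‑carried homomorphism, with $X$ lfg. By \autoref{lfgdirectedunion} I would first write $X$ as the directed union $X=\colim_{i\in I}X_i$ of its subcoalgebras $X_i\rightarrowtail X$ lying in $\Coalgfg H$, writing $c_i\colon X_i\rightarrowtail X$ for the colimit injections. For each $i$ I take the epi‑carried/mono‑carried factorization of the composite homomorphism $q\cdot c_i\colon X_i\to Y$, say $X_i\xrightarrow{e_i}T_i\xrightarrow{m_i}Y$; such a factorization exists in $\Coalg H$ because $H$ preserves monos, and $T_i$ lies in $\Coalgfg H$ since finitely generated objects are closed under strong quotients.

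Next I would check that the $T_i$ form a directed diagram of subcoalgebras of $Y$. For $i\le j$ with connecting map $c_{ij}\colon X_i\rightarrowtail X_j$ we have $m_i\cdot e_i = q\cdot c_i = q\cdot c_j\cdot c_{ij} = m_j\cdot(e_j\cdot c_{ij})$, so the diagonal fill‑in property (with $e_i$ a strong epi and $m_j$ mono, lifted from $\C$ to $\Coalg H$) produces a unique mono‑carried $d_{ij}\colon T_i\rightarrowtail T_j$ with $d_{ij}\cdot e_i=e_j\cdot c_{ij}$ and $m_j\cdot d_{ij}=m_i$. Forming the directed union $T:=\colim_{i\in I}T_i$ in $\Coalg H$, the cocone $(m_i)$ induces a mono‑carried homomorphism $m\colon T\rightarrowtail Y$, and the composites $X_i\to T_i\to T$ form a cocone on the $X_i$‑diagram, hence induce a homomorphism $e\colon X\to T$; since the $c_i$ are jointly epic one gets $m\cdot e=q$.

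It then remains to see that $m$ is an isomorphism. Since $q=m\cdot e$ is a strong epi, $m$ is a strong epi; being also mono‑carried it is invertible — concretely, applying the diagonal fill‑in to the commuting square $m\cdot e = \id_Y\cdot q$ yields a two‑sided inverse of $m$. Hence $Y\cong T=\colim_{i\in I}T_i$ is a filtered colimit of coalgebras from $\Coalgfg H$, so $Y$ is lfg by \autoref{lfgcharacterization}.

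The step I expect to be the main obstacle is the last one: showing that the canonical mono $m\colon\colim_i T_i\to Y$ is actually epi, i.e.~that replacing each $X_i$ by its image $T_i$ in $Y$ loses none of the behaviour of $Y$. Everything else is bookkeeping with factorizations and diagonal fill‑ins, closely paralleling the proof of \autoref{prop:lfgcolim}. An alternative route that sidesteps constructing $T$ argues directly with the subobject characterisation of lfg coalgebras: the subcoalgebras $T_i=\Im(q\cdot c_i)\rightarrowtail Y$ are finitely generated and directed, their union is all of $Y$ because $q$ is epi and $X=\colim_iX_i$, and hence any finitely generated subobject $S\rightarrowtail Y$ factors through some $T_i$, which is the subcoalgebra in $\Coalgfg H$ required by the characterisation following \autoref{lfgcoalgebra}.
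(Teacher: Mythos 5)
Your proposal is correct and follows essentially the same route as the paper: the paper writes $X$ as the directed union of its fg-carried subcoalgebras and then applies its auxiliary result (\autoref{unionsofimages}, images of colimits are directed unions of images) with $f=q$ to exhibit $Y=\Im(q)$ as a directed union of fg-carried subcoalgebras, which is exactly what you reprove inline via the factorizations of $q\cdot c_i$ and diagonal fill-ins. Your handling of the final step (the induced mono $m$ into $Y$ is a strong epi, hence iso — with monicity of $m$ resting on the lfp fact, cited in the paper's \autoref{colimitepi}, that directed colimits of subobjects inject into $Y$) is an equivalent variant of the paper's copair-of-strong-epis argument.
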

The failure of this property for lfp coalgebras is the reason why the
rational fixpoint is not necessarily a subcoalgebra of the final coalgebra and
in particular the rational fixpoint in \cite[Example 3.15]{bms13} is an lfp
coalgebra for which the property fails.
\begin{theorem}
    The final lfg $H$-coalgebra is a subcoalgebra of the final $H$-coalgebra.
\end{theorem}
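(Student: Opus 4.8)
The plan is to reduce the statement to showing that the unique $H$-coalgebra morphism $\ell\colon \LFF H \to \nu H$ from the final lfg coalgebra (which exists by \autoref{thm:final}) into the final coalgebra (which exists since $\C$ is lfp and $H$ is finitary) is mono-carried; indeed a mono-carried homomorphism into $\nu H$ is precisely a subcoalgebra inclusion. First I would use that $H$ preserves monomorphisms, so that the (strong epi, mono)-factorization of $\C$ lifts to $\Coalg H$: this yields a factorization $\ell = m \cdot e$ into an epi-carried homomorphism $e\colon \LFF H \twoheadrightarrow I$ followed by a mono-carried homomorphism $m\colon I \rightarrowtail \nu H$, where $I = \Im(\ell)$ carries the unique coalgebra structure making both $e$ and $m$ homomorphisms.

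The crucial step is then to observe that $I$, being a strong quotient of the lfg coalgebra $\LFF H$, is again lfg by \autoref{lfgquotients}; hence, by finality of $\LFF H$ in $\Coalglfg H$, there is a (unique) coalgebra morphism $h\colon I \to \LFF H$. Now a short diagram chase finishes the argument: $\ell\cdot h$ and $m$ are both coalgebra morphisms $I \to \nu H$, so by finality of $\nu H$ they coincide, $\ell \cdot h = m$; and $h \cdot e\colon \LFF H \to \LFF H$ is a coalgebra endomorphism of the final lfg coalgebra, hence $h\cdot e = \id$. Thus $e$ is a split monomorphism, and since it is epi-carried and the forgetful functor $\Coalg H \to \C$ reflects epimorphisms, $e$ is also epic; a morphism that is simultaneously a split mono and epic is an isomorphism. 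Therefore $\ell = m\cdot e$ is mono-carried, as claimed, and $\LFF H$ is a subcoalgebra of $\nu H$.

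I expect no real obstacle beyond the ingredient already isolated as \autoref{lfgquotients}: the entire argument hinges on the fact that lfg coalgebras, unlike lfp coalgebras, are closed under strong quotients, so that $\Im(\ell)$ does not leave the class on which $\LFF H$ is final. The lifting of the factorization and the ensuing diagram chase are routine, and the contrast with the rational fixpoint — where precisely this step fails — makes clear that \autoref{lfgquotients} carries all the weight here.
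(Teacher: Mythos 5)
Your proof is correct and follows essentially the same route as the paper: factor the unique morphism into $\nu H$ as a strong-epi-carried homomorphism followed by a mono-carried one (using that $H$ preserves monos), invoke \autoref{lfgquotients} to see the image is lfg, and use finality of $\LFF H$ to split the epi part, forcing it to be an isomorphism. The only difference is the superfluous intermediate observation $\ell\cdot h = m$ via finality of $\nu H$, which the paper omits; otherwise the arguments coincide.
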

\begin{proof}
  Let $(L,\ell)$ be the final lfg coalgebra. Consider the unique coalgebra morphism $L \to \nu H$ and take its factorization:  
    \[
        \begin{tikzcd}
            (L,\ell) \arrow[->>,yshift=1mm]{r}{e}
                \arrow[loop left,>->,dashed,looseness=8]{}{\id}
            &(I,i) \arrow[>->]{r}{m}
                \arrow[dashed,yshift=-1mm]{l}{i^\dagger}
            &(\nu H,\tau)
        \end{tikzcd},
        \quad\text{with $e$ strong epi in $\C$}.
    \]
    By \autoref{lfgquotients}, $I$ is an lfg coalgebra and so by finality of $L$ we have the coalgebra morphism $i^\dagger$ such that $\id_L = i^\dagger\cdot e$. It follows that $e$ is monic and therefore an iso.
    \qed
\end{proof}

\noindent
In other words, the final lfg $H$-coalgebra collects precisely the finitely generated
behaviours from the final $H$-coalgebra. We now show that the final
lfg coalgebra is a fixpoint of $H$ which hinges on the following:
\begin{lemma}\label{Hlfg}
    For any lfg coalgebra $C\xrightarrow{c} HC$, the coalgebra
    $HC\xrightarrow{Hc}{HHC}$ is lfg.
\end{lemma}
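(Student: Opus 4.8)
The plan is to verify, for the coalgebra $(HC,Hc)$, the subobject characterization of lfg coalgebras (the lemma right after \autoref{lfgcoalgebra}): given any finitely generated subobject $S\rightarrowtail HC$, I must produce a subcoalgebra of $(HC,Hc)$ with finitely generated carrier that contains $S$. The strategy is to push $S$ into a ``stage'' $HC_i$ coming from a decomposition of $C$ into finitely generated subcoalgebras, and there to exploit the key observation that the structure map of $(HC_i,H\gamma_i)$ always lands in $H$ of the image of $\gamma_i$.

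First, by \autoref{lfgdirectedunion} (equivalently \autoref{lfgcharacterization}) I would write $(C,c)$ as the directed union of its subcoalgebras $(C_i,\gamma_i)$ from $\Coalgfg H$, with mono-carried inclusions $m_i\colon C_i\rightarrowtail C$. Since $H$ is finitary and preserves monomorphisms, $HC$ is the directed union of the $HC_i$ along the monos $Hm_i$; moreover each $Hm_i$ is a coalgebra morphism $(HC_i,H\gamma_i)\to(HC,Hc)$, since $m_i$ is a coalgebra morphism $(C_i,\gamma_i)\to(C,c)$ and $H$ is a functor. As $S$ is finitely generated, the mono $S\rightarrowtail HC$ factors through some injection $Hm_i$, yielding a finitely generated subobject $S\rightarrowtail HC_i$.

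The heart of the proof is then carried out inside $(HC_i,H\gamma_i)$. Factor $\gamma_i$ as $C_i\twoheadrightarrow \Im\gamma_i\xrightarrow{u} HC_i$; then $\Im\gamma_i$ is finitely generated, being a strong quotient of the finitely generated $C_i$, and, since $H$ preserves monos, $H\gamma_i = Hu\cdot He$ factors through the mono $Hu\colon H(\Im\gamma_i)\rightarrowtail HHC_i$. Consequently the join $P := S\vee\Im\gamma_i$, formed among subobjects of $HC_i$ with inclusion $j\colon P\rightarrowtail HC_i$ and $v\colon\Im\gamma_i\rightarrowtail P$, carries a coalgebra structure $p\colon P\to HP$ with $Hj\cdot p = H\gamma_i\cdot j$: indeed $H\gamma_i\cdot j = Hu\cdot He\cdot j = Hj\cdot Hv\cdot He\cdot j$ already factors through $Hj$, so one may take $p := Hv\cdot He\cdot j$. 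Thus $(P,p)\rightarrowtail(HC_i,H\gamma_i)$ is a subcoalgebra, and $P$ is finitely generated, being a strong quotient of $S+\Im\gamma_i$ together with closure of finitely generated objects under finite coproducts and strong quotients. Composing $j$ with $Hm_i$ then exhibits $(P,p)$ as a subcoalgebra of $(HC,Hc)$ with finitely generated carrier that contains $S$, which completes the verification.

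The main obstacle — and the conceptual crux — is the observation that the structure map of $(HC,Hc)$ factors through $H(\Im c)$, and likewise for the stages $C_i$; this is the abstract form of the elementary fact that, in $\Set$, the ``successors'' of an element $t\in HC$ of the coalgebra $(HC,Hc)$ lie among the finitely many one-step images $c(x)$. It is precisely this that keeps the subcoalgebra generated by a finitely generated subobject finitely generated, and also why one must first descend to a finitely generated stage $C_i$: for the whole coalgebra $(C,c)$ the image $\Im c$ is in general only lfg (being a quotient of the lfg coalgebra $C$, cf.\ \autoref{lfgquotients}), not finitely generated.
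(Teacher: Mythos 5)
Your proof is correct, but the witnessing coalgebra is built differently from the paper's. The first half coincides: both decompose $(C,c)$ as a directed union of subcoalgebras $(C_i,\gamma_i)$ in $\Coalgfg H$ and use that $H$ is finitary and mono-preserving to present $HC$ as a directed union of the $HC_i$, so that the given fg (sub)object factors through some stage. From there the paper works with the raw \autoref{lfgcoalgebra} rather than the subobject characterization: it takes an fg subobject $Q\rightarrowtail HC$ with $Q\rightarrowtail HC$ factoring as $H\inj_p\cdot q$ through $HP$ for an fg subcoalgebra $(P,p)$, equips the coproduct $Q+P$ with the structure $H\inr\cdot[q,p]\colon Q+P\to HP\to H(Q+P)$, and exhibits the (not necessarily monic) coalgebra morphism $H\inj_p\cdot[q,p]\colon Q+P\to HC$; this avoids any need for joins of subobjects or for the witnessing morphism to be a subcoalgebra inclusion. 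You instead verify the subobject characterization, replacing the coproduct by the join $S\vee\Im\gamma_i$ inside $HC_i$ and obtaining the coalgebra structure by restricting $H\gamma_i$, which factors through $H(\Im\gamma_i)$ since $H$ preserves monos. Both constructions rest on the same observation that all ``successors'' under $H\gamma_i$ lie in $H$ applied to something finitely generated attached to $C_i$ (the paper uses all of $P$, you use the sharper $\Im\gamma_i$), and both implicitly use closure of fg objects under finite coproducts and strong quotients. Your route yields a genuine fg-carried subcoalgebra of $(HC,Hc)$ directly, at the modest extra cost of invoking the characterization lemma, the image factorization of $\gamma_i$, and the existence of joins of subobjects (available in any lfp category); the paper's route is slightly more economical in prerequisites because the definition of lfg only asks for a factorization through some coalgebra morphism, monic or not.
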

\begin{proof}
    Consider $f: S\to HC$ with $S$ finitely generated. As \C is lfp we know that
    $HC$ is the colimit of its fg subobjects, and so $f: S\to
    HC$ factors through some subobject $\inj_q: Q\rightarrowtail HC$ with $Q$ fg and $f =
    \inj_q\cdot f'$. On the other hand, $(C,c)$ is lfg, i.e.~the directed union
    of its subcoalgebras from $\Coalgfg H$. Then, since $H$ is finitary and
    mono-preserving,
    $HC \xrightarrow{c} HHC$ is also a directed union and the morphism $\inj_q: Q\to HC$
    factors through some $HP\xrightarrow{Hp}HHP$ with $(P,p)\in \Coalgfg H$ via $\inj_p: (P,p)\rightarrowtail (C,c)$,
    i.e.~$H\inj_p\cdot q=\inj_q$. Finally, we can
    construct a coalgebra with fg carrier
    \[
        Q + P
        \xrightarrow{[q,p]}
        HP
        \xrightarrow{H\inr}
        H(Q+P)
    \]
    and a coalgebra homomorphism $H\inj_p\cdot[q,p]: Q+P \to HC$. In the diagram
    \[
    \begin{tikzcd}
    S
    \arrow{rr}{f}
    \arrow{dd}[left]{f'}
    &&
        HC \arrow{rr}{Hc}
        &&
        HHC
        \\
        &&
        HP \arrow{rr}{Hp}
        \arrow[oldequal]{dr}
        \arrow{u}[right]{H\inj_p}
        &&
        HHP
        \arrow{u}[right]{HH\inj_p}
        \\
        Q
        \arrow{uurr}[sloped,above]{\inj_q}
        \arrow{urr}[sloped,above]{q}
        \arrow{rr}{\inl}
        &&
        Q+P
        \arrow{r}[pos=0.4]{[q,p]}
        \arrow{u}[left]{[q,p]}
        &
        HP
        \arrow{r}{H\inr}
        &
        H(Q+P)
        \arrow{u}[right]{H[q,p]}
        \arrow[shiftarr={xshift=1.5cm}]{uu}[right]{H(H\inj_p\cdot[q,p])}
    \end{tikzcd}
    \]
    every part trivially commutes, so $H\inj_p\cdot[q,p]$ is the
    desired homomorphism.
    \qed
\end{proof}
So with a proof in virtue to Lambek's Lemma \cite[Lemma 2.2]{lambek}, we obtain the desired fixpoint: 
\begin{theorem}
    The carrier of the final lfg $H$-coalgebra is a fixpoint of $H$. 
\end{theorem}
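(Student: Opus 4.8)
The plan is to invoke Lambek's Lemma in its usual form: if $(A, a\colon HA \to A)$ is an initial $H$-algebra then $a$ is an isomorphism, and dually if $(\nu H, \tau)$ is a final $H$-coalgebra then $\tau$ is an isomorphism. The reference to \cite[Lemma 2.2]{lambek} in the sentence preceding the statement signals exactly this. So the proof should show that the final lfg coalgebra plays, \emph{for lfg coalgebras}, the role that the final coalgebra plays in general, and then run the Lambek argument verbatim.

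\textbf{Proof.}
Let $(L, \ell\colon L \to HL)$ be the final lfg $H$-coalgebra, which exists by \autoref{thm:final}. By \autoref{Hlfg}, the coalgebra $(HL, H\ell\colon HL \to HHL)$ is again lfg. Hence, by finality of $(L,\ell)$ in $\Coalglfg H$, there is a unique coalgebra morphism
\[
  h\colon (HL, H\ell) \longrightarrow (L,\ell).
\]
We claim that $h$ is inverse to $\ell$. First, $\ell$ itself is a coalgebra morphism $(L,\ell) \to (HL, H\ell)$, since the homomorphism condition $H\ell \o \ell = H\ell \o \ell$ holds trivially. Therefore $h \o \ell\colon (L,\ell) \to (L,\ell)$ is a coalgebra endomorphism of the final lfg coalgebra, and so by finality $h \o \ell = \id_L$. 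For the other composite, $\ell \o h\colon (HL, H\ell) \to (HL, H\ell)$ is a coalgebra endomorphism; using $h \o \ell = \id_L$ we compute
\[
  \ell \o h = \ell \o h \o \id_{HL}
            = \ell \o h \o (H\ell \text{ precomposed appropriately})
\]
— more directly, $\ell \o h$ is a coalgebra morphism on $(HL,H\ell)$ and so is $\id_{HL}$, but $(HL,H\ell)$ need not be final, so instead apply $H(-)$ to $h \o \ell = \id_L$ to get $Hh \o H\ell = \id_{HL}$, and combine with the homomorphism square for $h$, namely $Hh \o H\ell = \ell \o h$, whence $\ell \o h = \id_{HL}$. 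Thus $\ell$ is an isomorphism and $L$ is a fixpoint of $H$. \qed

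\textbf{On the obstacle.} The only subtlety — and the reason \autoref{Hlfg} was proved just beforehand — is establishing that $(HL, H\ell)$ lies in $\Coalglfg H$ so that finality of $L$ applies to it; this is precisely \autoref{Hlfg} and needs $H$ finitary and mono-preserving (Assumption \ref{basicassumption}). Everything after that is the standard Lambek diagram chase: $\ell$ is trivially a homomorphism into $(HL,H\ell)$, one composite is pinned down by finality of $L$, and the other composite falls out by applying $H$ to the first identity together with the homomorphism square of $h$. I would present it in exactly that order, keeping the two composites $h\o\ell$ and $\ell\o h$ clearly separated.
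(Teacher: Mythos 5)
Your proof is correct and follows exactly the route the paper intends: Lemma~\ref{Hlfg} puts $(HL,H\ell)$ in $\Coalglfg H$, finality of $(L,\ell)$ pins down $h\o\ell=\id_L$, and applying $H$ to this identity together with the homomorphism square of $h$ gives $\ell\o h=\id_{HL}$ --- the standard Lambek argument the paper invokes without spelling out. Only the parenthetical false start before ``more directly'' should be deleted, since the argument you settle on is the right one.
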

We denote the above fixpoint by $(\LFF H, \ell)$ and call it the \emph{locally finite fixpoint} (LFF) of $H$.
In particular, the LFF always exists under \autoref{basicassumption}, providing a finitary corecursion principle.

\iffull\else
\begin{remark}
  As we mentioned in the introduction the rational fixpoint of the finitary functor $H$ is the initial iterative algebra for $H$. A similar algebraic characterization is possible for the LFF. One simply replaces the fp object $X$ in the definition of a flat equation morphism by an fg object to obtain the notion of an \fgiterative algebra. 
\end{remark}
\begin{theorem}
  The LFF is the \emph{initial \fgiterative} $H$-algebra. 
\end{theorem}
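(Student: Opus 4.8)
The plan is to mimic the known proof that the rational fixpoint is the initial iterative algebra (see~\cite{iterativealgebras}), replacing everywhere finitely presentable carriers by finitely generated ones. Write $\ell\colon\LFF H\xrightarrow{\ \cong\ }H(\LFF H)$ for the structure of the final lfg coalgebra (an isomorphism by the preceding fixpoint theorem), so that the algebra under consideration is $(\LFF H,\ell^{-1})$. Two things are to be shown: \textbf{(a)} $(\LFF H,\ell^{-1})$ is itself \fgiterative, and \textbf{(b)} it is initial among \fgiterative $H$-algebras.

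\emph{Step (a): $(\LFF H,\ell^{-1})$ is \fgiterative.} Given a flat equation morphism $e\colon X\to HX+\LFF H$ with $X$ finitely generated, I would turn it into a coalgebra on $X+\LFF H$ via
\[
  c \;=\; \big[\,[H\inl,\,H\inr\cdot\ell]\cdot e,\ H\inr\cdot\ell\,\big]\colon X + \LFF H \longrightarrow H(X + \LFF H),
\]
for which $\inr\colon(\LFF H,\ell)\to(X+\LFF H,c)$ is by construction a coalgebra homomorphism. The crux is to show $(X+\LFF H,c)$ is lfg: using \autoref{lfgcharacterization}, write $\LFF H$ as the directed union of its subcoalgebras $(P_i,p_i)\in\Coalgfg H$; then, since $X$ is finitely generated, $e$ factors through $HX+P_i\rightarrowtail HX+\LFF H$ for some $i$, which lets one restrict $c$ to a coalgebra with the finitely generated carrier $X+P_i$, and the directed union of these subcoalgebras is $(X+\LFF H,c)$. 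Finality of $(\LFF H,\ell)$ in $\Coalglfg H$ then supplies a \emph{unique} coalgebra homomorphism $h\colon(X+\LFF H,c)\to(\LFF H,\ell)$. As $\inr$ is a coalgebra homomorphism, $h\cdot\inr$ is an endomorphism of the final lfg coalgebra, hence $\id_{\LFF H}$; putting $e^\dagger:=h\cdot\inl$ and pasting the homomorphism square of $h$ along $\inl$ then shows, using that $\ell$ is iso, that $e^\dagger=[\ell^{-1},\id]\cdot(He^\dagger+\id)\cdot e$. For uniqueness of solutions, a short computation shows that \emph{any} solution $s$ of $e$ makes $[s,\id_{\LFF H}]\colon(X+\LFF H,c)\to(\LFF H,\ell)$ a coalgebra homomorphism, whence $[s,\id]=h$ by finality and $s=h\cdot\inl=e^\dagger$.

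\emph{Step (b): initiality.} Let $(A,a)$ be an arbitrary \fgiterative $H$-algebra. Every coalgebra $(P,p)\in\Coalgfg H$ yields a flat equation morphism $\inl\cdot p\colon P\to HP+A$ with a unique solution $p^\dagger\colon P\to A$, which by definition satisfies $p^\dagger=a\cdot Hp^\dagger\cdot p$. For a coalgebra homomorphism $g\colon(P,p)\to(P',p')$ in $\Coalgfg H$ the morphism $p'^\dagger\cdot g$ is again a solution of $\inl\cdot p$ (using $Hg\cdot p=p'\cdot g$), so $p'^\dagger\cdot g=p^\dagger$ by uniqueness; hence $(p^\dagger)_{(P,p)}$ is a cocone on the diagram $\Coalgfg H\to\C$. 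By \autoref{thm:final}, and since the forgetful functor creates colimits, $\LFF H$ is the colimit of this diagram, with jointly epic injections $\inj_P\colon P\to\LFF H$ that are moreover coalgebra homomorphisms; the cocone thus induces a unique $h\colon\LFF H\to A$ with $h\cdot\inj_P=p^\dagger$ for all $(P,p)$. From $p^\dagger=a\cdot Hp^\dagger\cdot p$, $p^\dagger=h\cdot\inj_P$ and $\ell\cdot\inj_P=H\inj_P\cdot p$ one obtains $h\cdot\inj_P=(a\cdot Hh\cdot\ell)\cdot\inj_P$ for every $P$, so $h=a\cdot Hh\cdot\ell$, i.e.\ $h$ is an algebra homomorphism $(\LFF H,\ell^{-1})\to(A,a)$. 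Conversely, if $k\colon(\LFF H,\ell^{-1})\to(A,a)$ is any algebra homomorphism then each $k\cdot\inj_P$ solves $\inl\cdot p$, hence equals $p^\dagger=h\cdot\inj_P$, and joint epimorphy of the $\inj_P$ gives $k=h$. Thus $(\LFF H,\ell^{-1})$ is the initial \fgiterative algebra.

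\emph{The main obstacle} is the claim in Step~(a) that $(X+\LFF H,c)$ is lfg, i.e.\ precisely that the equation morphism $e$ factors through one of the subcoalgebras $HX+P_i$ of $HX+\LFF H$. Here care is needed: since $X$ is only finitely generated, $\C(X,-)$ preserves directed colimits \emph{of monomorphisms} only, so one has to use the presentation of $\LFF H$ as a directed \emph{union} (\autoref{lfgcharacterization}) together with the fact that forming a coproduct with a fixed object preserves monomorphisms in an lfp category, ensuring that $HX+\LFF H$ is indeed the directed union of the subobjects $HX+P_i$. All remaining verifications are routine diagram chases, entirely parallel to the rational-fixpoint case.
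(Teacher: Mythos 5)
Your Step~(b) is sound and is in substance the paper's own argument (unique solvability of $\inl\cdot p$ for fg-carried coalgebras in an \fgiterative algebra, plus the colimit presentation of $\LFF H$ over $\Coalgfg H$); likewise the formal skeleton of Step~(a) — a coalgebra $c$ on $X+\LFF H$ and finality in $\Coalglfg H$ — would deliver both existence and uniqueness of solutions \emph{provided} $(X+\LFF H,c)$ is lfg. But exactly that point rests on a false lemma: it is not true that forming a coproduct with a fixed object preserves monomorphisms in every lfp category. In the lfp category of commutative rings (a finitary variety), the mono $\mathbb{Z}\rightarrowtail\mathbb{Q}$ is sent by $\mathbb{Z}/2+(-)$ (the coproduct is the tensor product over $\mathbb{Z}$) to $\mathbb{Z}/2\to\mathbb{Z}/2\otimes\mathbb{Q}=0$, which is not monic; note also that the paper has to \emph{assume} monic coproduct injections as an extra hypothesis in its treatment of Courcelle's algebraic trees, confirming that such properties are not automatic in lfp categories. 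Consequently the diagram of the objects $HX+P_i$ (or $X+P_i$) need not be a directed diagram of monos, so the fg-ness of $X$ — which only guarantees factorization through directed colimits of monomorphisms — gives you no factorization of $e$ through some $HX+P_i$. Without that, the lfg-ness of $(X+\LFF H,c)$ is unsupported, and both the existence and the uniqueness half of Step~(a) (each invokes finality of $\LFF H$ applied to this very coalgebra) are left hanging.

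The paper's proof of \autoref{LFFisIterative} is built precisely to avoid this trap: it never takes coproducts of monos, but their \emph{images}. It presents $HX+\LFF H$ as the directed union of the subobjects $\Im\big(t+v^\dagger\colon T+V\to HX+\LFF H\big)$, where $t\colon T\rightarrowtail HX$ ranges over the fg subobjects of $HX$ and $(V,v)$ over the fg-carried subcoalgebras of $\LFF H$ with $v^\dagger$ monic. Each such image is fg (a strong quotient of the fg object $T+V$), and the connecting morphisms really are monos, so $\C(X,-)$ applies and $e$ factors through one image $W$; a coalgebra structure on $W$ is then produced by diagonal fill-in, and the solution is $w^\dagger\cdot e'$, with uniqueness argued against $m\colon W\rightarrowtail HX+\LFF H$. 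To salvage your route you would have to do the analogous thing — replace $X+P_i$ by its image in $X+\LFF H$ and verify that $c$ restricts to it — which in effect reinstates the paper's image argument; the blanket appeal to mono-preservation of coproducts cannot stand as stated.
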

For details, see the full version~\cite{mpw15} of our paper or~\cite{wissmann2015}. 
\fi%
\iffull
\subsection{Iterative Algebras}
Recall from~\cite{iterativealgebras,streamcircuits} that the rational fixpoint of a functor $H$ has a universal property both as a coalgebra and as an algebra for $H$. This situation is completely analogous for the LFF. We already established its universal property as a coalgebra in~\autoref{thm:final}. Now we turn to study the LFF as an algebra for $H$. 
%
%One can extend this to a more powerful corecursion principle, which represents the
%basics of Elgot's iterative theories \cite{iterativealgebras}. A corecursive definition is formalized
%by an equation morphism:
\begin{definition}
    An \emph{equation morphism} $e$ in an object $A$ is a morphism
    \(
        X \to HX + A,
    \)
    where $X$ is a finitely generated object. If $A$ is the carrier of an
    algebra $\alpha: HA\to A$, we call the \C-morphism $e^\dagger: X \to A$ a
    \emph{solution} of $e$ if $[\alpha,\id_A]\cdot He^\dagger + \id_A \cdot e = e^\dagger$.
    An $H$-algebra $A$ is called \emph{\fgiterative} if every equation morphism in $A$
    has a unique solution.
\end{definition} 

\begin{example}[{see \cite[Example 2.5 (iii)]{m_cia}}]
    The final $H$-coalgebra (considered as an algebra for $H$) is \fgiterative. In fact, in this algebra even morphisms $X \to HX + \nu H$ where $X$ is not necessarily an fg object have a unique solution. 
\end{example}

\begin{definition}
    For \fgiterative algebras $A$ and $C$, an equation morphism $e: X \to HX +
    A$ and a morphism $h: A\to C$ of $\C$ define an equation morphism $h\bullet e$ in $C$ as
    \(
        \begin{tikzcd}
            X \arrow{r}{e} &
            HX + A \arrow{r}{HX+h} & HX + C.
        \end{tikzcd}
    \)
    We say that $h$ preserves the solution $e^\dagger$ of $e$ if
    \(
        h\cdot e^\dagger = (h\bullet e)^\dagger.
    \)
    The morphism $h$ is called \emph{solution preserving} if it
    preserves the solution of any equation morphism $e$.
\end{definition}
Similarly to \cite{iterativealgebras}, the algebra homomorphisms are precisely
the solution preserving morphisms between iterative algebras, the proof is also
very similar.

\begin{proposition}
    \label{LFFisIterative}
    The locally finite fixpoint is \fgiterative.
\end{proposition}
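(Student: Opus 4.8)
The plan is to show that $\LFF H$, equipped with the inverse $\ell^{-1}\colon H(\LFF H)\to\LFF H$ of the fixpoint isomorphism from the previous theorem, has a unique solution for every equation morphism $e\colon X\to HX+\LFF H$ with $X$ finitely generated. I would first recall that $(\nu H,\tau^{-1})$ is \fgiterative, and in fact that \emph{every} morphism $X\to HX+\nu H$ has a unique solution there, not just those with fg domain; this is the example cited just above the proposition. Since by the preceding theorem the canonical map $m\colon\LFF H\rightarrowtail\nu H$ is a mono-carried coalgebra morphism (so that $m$ is compatible with the algebra structures, $m\cdot\ell^{-1}=\tau^{-1}\cdot Hm$), any solution in $\LFF H$ must map under $m$ to the unique solution in $\nu H$ of the transported equation $m\bullet e$. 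This immediately gives \emph{uniqueness}: if $s_1,s_2\colon X\to\LFF H$ are both solutions of $e$, then $m\cdot s_1$ and $m\cdot s_2$ are both solutions of $m\bullet e$ in $\nu H$, hence equal, and $m$ monic gives $s_1=s_2$.

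For \emph{existence}, the key point is to produce a solution and then show it factors through $\LFF H\rightarrowtail\nu H$. Let $t\colon X\to\nu H$ be the unique solution of $m\bullet e$ in the final coalgebra. I would argue that the image of $t$ lands inside $\LFF H$. To see this, build from $e$ a coalgebra on $X+\LFF H$ in the obvious way — namely use $e$ on the $X$-summand (reading $HX+\LFF H\hookrightarrow H(X+\LFF H)+0$ appropriately, i.e. postcompose with $H\inl$ on the first component and with $H\inr\cdot\ell$... more precisely: $[H\inl\cdot e\text{-ish},\,H\inr\cdot\ell]$) so that the colimit injection $\LFF H\to X+\LFF H$ is a coalgebra morphism. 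Actually the cleaner route: since $X$ is fg and $\LFF H$ is lfg, hence (by \autoref{lfgcharacterization}) a directed union of its fg subcoalgebras, the morphism $e\colon X\to HX+\LFF H$ factors through $HX + P$ for some subcoalgebra $(P,p)\rightarrowtail(\LFF H,\ell)$ with $P$ fg; then $X+P$ carries an $H$-coalgebra (with $e$ on $X$ and $p$ on $P$) which has fg carrier, so there is a unique coalgebra morphism $X+P\to\LFF H$; restricting along $\inl$ and using that the composite $X\to X+P\to\LFF H\rightarrowtail\nu H$ solves $m\bullet e$, we identify it with $t$ by uniqueness in $\nu H$. Hence $t$ factors as $X\xrightarrow{e^\dagger}\LFF H\xrightarrow{m}\nu H$, and a short diagram chase using $m\cdot\ell^{-1}=\tau^{-1}\cdot Hm$ and $m$ monic shows $e^\dagger$ is a solution of $e$ in $\LFF H$.

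The main obstacle I anticipate is the bookkeeping in the factorization step: one must genuinely use that $X$ is finitely generated (so that $e$, viewed as a map into the directed union $HX+\LFF H$, factors through a \emph{finite} stage, using finiteness of the coproduct injection's domain and that $H$ preserves the relevant directed colimits of monos), and one must check that the coalgebra structure induced on $X+P$ is well-defined — i.e. that $p$ restricted back along $P\rightarrowtail\LFF H$ is compatible, which is exactly the statement that $(P,p)$ is a subcoalgebra. None of this is deep, but it is the part where the hypotheses (fg carrier, $H$ finitary and mono-preserving, \autoref{lfgcharacterization}) all get used, so it deserves care. Everything else is formal, mirroring the corresponding argument for the rational fixpoint in~\cite{iterativealgebras}.
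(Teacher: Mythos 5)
Your uniqueness argument is fine, and in fact it is a slightly different (and shorter) route than the paper's: transporting solutions along the mono-carried coalgebra morphism $m\colon \LFF H\rightarrowtail \nu H$ and invoking unique solvability in $\nu H$ is legitimate, whereas the paper instead gets uniqueness from finality of $\LFF H$ among lfg coalgebras, by showing that any solution induces a coalgebra homomorphism out of an auxiliary fg-carried coalgebra.

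The existence half, however, has a genuine gap at exactly the step you flag as ``bookkeeping''. You want $e\colon X\to HX+\LFF H$ to factor through $HX+P$ for some fg subcoalgebra $(P,p)\rightarrowtail(\LFF H,\ell)$, on the grounds that $X$ is fg and $\LFF H$ is a directed union of such $P$'s. But $X$ being finitely generated only guarantees that $\C(X,-)$ preserves directed colimits \emph{of monomorphisms}, and the diagram with stages $HX+P$ and connecting maps $\id_{HX}+(P\rightarrowtail P')$ is not in general a diagram of monos: in an arbitrary lfp category the coproduct of a mono with an identity need not be monic (e.g.\ in commutative rings, where the coproduct is tensor product, $\id_{\mathbb{Z}/2}+(\mathbb{Z}\rightarrowtail\mathbb{Q})$ is the zero map $\mathbb{Z}/2\to 0$). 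Nor can you fall back on finite presentability, since $X$ is only fg (and $HX$ itself is typically not fg, so you cannot shrink that summand naively either); replacing $X$ by an fp cover $X'\twoheadrightarrow X$ and factoring $e$ precomposed with the cover does not descend, because the diagonal fill-in would again need $HX+\inj_P$ to be monic. This is precisely the point where the paper's proof does real work: it presents $HX+\LFF H$ as a directed union of the \emph{images} $\Im\big(t+v^\dagger\colon T+V\to HX+\LFF H\big)$, where $T$ ranges over fg subobjects of $HX$ and $(V,v)$ over fg subcoalgebras whose canonical morphism $v^\dagger$ into $\LFF H$ is monic (using \autoref{colimitepi} and \autoref{unionsofimages}); since these images are genuinely subobjects and are fg (as strong quotients of $T+V$), the fg-ness of $X$ does yield a factorization $e=m\cdot e'$ through such a stage $W$, which is then equipped with a coalgebra structure by diagonal fill-in, and \autoref{finalforfg} supplies $w^\dagger\colon W\to\LFF H$ with $w^\dagger\cdot e'$ the desired solution. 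Your subsequent construction of a coalgebra on $X+P$ and the identification of the restricted morphism with a solution would go through once a correct factorization is available (modulo noting that $h\cdot\inr$ must coincide with the subcoalgebra inclusion, again by finality), but as stated the factorization itself is unjustified in the generality of \autoref{basicassumption}.
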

%\begin{proof}[Sketch]
%    One can transform $e: X\to HX+\LFF H$ into an lfg coalgebra on $X + \LFF H$.
%    Then one shows that there is a one-to-one correspondence between
%    homomorphisms into $\LFF H$ and solutions of $e$ in the algebra $\ell^{-1}: H(\LFF H) \to \LFF H$ by
%    diagram chasing.
%    \qed
%\end{proof}

\begin{theorem}\label{alllfginitial}
    For an \fgiterative algebra $\alpha: HA \to A$ and an lfg coalgebra $e:
    X\to HX$ there is a unique $\C$-morphism $u_e: X\to A$ such that $u_e
    = \alpha \cdot Hu_e \cdot e$.
\end{theorem}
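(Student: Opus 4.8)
The plan is to reduce to the universal property of the \fgiterative algebra $A$ by presenting the lfg coalgebra $(X,e)$ as a directed colimit of its finitely-generated-carried subcoalgebras and transporting solutions along that colimit. By \autoref{lfgdirectedunion}, $(X,e)$ is the directed colimit of the diagram $D$ of all its subcoalgebras $c_i:(X_i,x_i)\rightarrowtail (X,e)$ with $X_i$ finitely generated; since the forgetful functor $\Coalg H\to\C$ creates colimits, $X=\colim_D X_i$ in $\C$ with colimit cocone $(c_i)$. For each $i$, the composite $e_i:=\inl\cdot x_i: X_i\to HX_i+A$ is an equation morphism in $A$ (as $X_i$ is fg), so by \fgiterativeness it has a unique solution $e_i^\dagger: X_i\to A$, which unwinds (using $[\alpha,\id_A]\cdot(He_i^\dagger+\id_A)\cdot\inl=\alpha\cdot He_i^\dagger$) to the identity $e_i^\dagger=\alpha\cdot He_i^\dagger\cdot x_i$.

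Next I would show that the family $(e_i^\dagger)_i$ is a cocone over $D$, i.e.\ $e_j^\dagger\cdot h_{ij}=e_i^\dagger$ for every connecting morphism $h_{ij}:(X_i,x_i)\to(X_j,x_j)$. This follows from uniqueness of solutions: since $h_{ij}$ is a coalgebra morphism we have $Hh_{ij}\cdot x_i=x_j\cdot h_{ij}$, and since $e_j^\dagger$ solves $e_j$ a short chase gives $e_j^\dagger\cdot h_{ij}=\alpha\cdot He_j^\dagger\cdot x_j\cdot h_{ij}=\alpha\cdot H(e_j^\dagger\cdot h_{ij})\cdot x_i$, so $e_j^\dagger\cdot h_{ij}$ is a solution of $e_i$ and hence equals $e_i^\dagger$. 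By the universal property of the colimit $X=\colim_D X_i$ there is then a unique morphism $u_e: X\to A$ with $u_e\cdot c_i=e_i^\dagger$ for all $i$.

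It remains to check the fixpoint equation $u_e=\alpha\cdot Hu_e\cdot e$ together with its uniqueness. Since the $c_i$ are jointly epic, it suffices to precompose with each $c_i$: using that $c_i$ is a coalgebra morphism ($Hc_i\cdot x_i=e\cdot c_i$) and functoriality of $H$, we get $\alpha\cdot Hu_e\cdot e\cdot c_i=\alpha\cdot H(u_e\cdot c_i)\cdot x_i=\alpha\cdot He_i^\dagger\cdot x_i=e_i^\dagger=u_e\cdot c_i$, so the two sides agree. For uniqueness, if $v:X\to A$ also satisfies $v=\alpha\cdot Hv\cdot e$, the same computation shows that each $v\cdot c_i$ solves the equation morphism $e_i$, whence $v\cdot c_i=e_i^\dagger=u_e\cdot c_i$ and therefore $v=u_e$ by joint epimorphy. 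The only genuinely delicate point is the cocone compatibility of the solutions in the second step, which is where uniqueness of solutions is essential; the rest is a routine application of the colimit universal property and of the standing assumptions on $H$.
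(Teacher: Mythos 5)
Your proof is correct and follows essentially the same route as the paper: present the lfg coalgebra as the directed colimit of its fg-carried subcoalgebras (Proposition~\ref{lfgdirectedunion}), obtain for each such subcoalgebra the unique coalgebra-to-algebra morphism via the equation morphism $\inl\cdot x_i$ and \fgiterative{}ness, check cocone compatibility by uniqueness of solutions, and conclude existence and uniqueness of $u_e$ from the colimit's universal property together with joint epimorphy of the injections. The only cosmetic difference is that the paper isolates the first step as a separate lemma (unique solutions for fg-carried coalgebras in an \fgiterative algebra), which you inline.
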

\begin{corollary}
    The locally finite fixpoint is the initial fg-iterative algebra.
\end{corollary}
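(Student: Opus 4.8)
The plan is to derive the corollary from \autoref{alllfginitial} together with \autoref{LFFisIterative}. Recall that the LFF carries both the coalgebra structure $\ell\colon \LFF H \to H(\LFF H)$, which is final in $\Coalglfg H$, and, via Lambek's Lemma, the algebra structure $\ell^{-1}\colon H(\LFF H)\to\LFF H$. By \autoref{LFFisIterative} this algebra $(\LFF H,\ell^{-1})$ is \fgiterative, hence an object of the category of \fgiterative $H$-algebras; it remains only to show that it admits a unique $H$-algebra homomorphism into every other \fgiterative algebra $(A,\alpha)$.

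For existence, I would instantiate \autoref{alllfginitial} with $(\LFF H,\ell)$ itself in the role of the lfg coalgebra $(X,e)$ (it is lfg, being the final lfg coalgebra). This yields a unique $\C$-morphism $u_\ell\colon \LFF H\to A$ with $u_\ell=\alpha\cdot Hu_\ell\cdot\ell$. Since $\ell$ is invertible by Lambek's Lemma, this equation is equivalent to $u_\ell\cdot\ell^{-1}=\alpha\cdot Hu_\ell$, i.e.\ to the statement that $u_\ell$ is an $H$-algebra homomorphism $(\LFF H,\ell^{-1})\to(A,\alpha)$.

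For uniqueness, let $h\colon(\LFF H,\ell^{-1})\to(A,\alpha)$ be any $H$-algebra homomorphism, so that $h\cdot\ell^{-1}=\alpha\cdot Hh$. Precomposing with $\ell$ gives $h=\alpha\cdot Hh\cdot\ell$, so $h$ satisfies exactly the fixpoint equation characterising $u_\ell$ in \autoref{alllfginitial}; by the uniqueness clause there, $h=u_\ell$. Hence $(\LFF H,\ell^{-1})$ is initial among \fgiterative $H$-algebras.

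Since both ingredients, \autoref{LFFisIterative} and \autoref{alllfginitial}, are already available, I do not expect the corollary itself to pose any real difficulty. The one point worth checking carefully is the passage, using that $\ell$ is an isomorphism, between the fixpoint equation $u_\ell=\alpha\cdot Hu_\ell\cdot\ell$ of \autoref{alllfginitial} and the algebra-homomorphism condition $u_\ell\cdot\ell^{-1}=\alpha\cdot Hu_\ell$; this is a one-line manipulation, but it is the only place where the particular structure of the LFF, rather than that of a general lfg coalgebra, enters.
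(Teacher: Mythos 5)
Your proof is correct and is exactly the intended derivation: the paper states the corollary as an immediate consequence of \autoref{LFFisIterative} (the LFF is \fgiterative) and \autoref{alllfginitial}, instantiated with the lfg coalgebra $(\LFF H,\ell)$ itself, using that $\ell$ is invertible to translate the fixpoint equation $u=\alpha\cdot Hu\cdot\ell$ into the $H$-algebra homomorphism condition. Your handling of both existence and uniqueness matches this argument.
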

\fi

\iffull
\subsection{Relation to the Rational Fixpoint}
\else
\medskip
\noindent\partitle{Relation to the Rational Fixpoint.} %
\fi
There are examples, where the rational fixpoint is not a subcoalgebra of the
final coalgebra. In categories, where fp~and fg~objects coincide, the
rational fixpoint and the LFF coincide as well (cf.~the respective colimit-construction in Section~\ref{sec:prelim} and Theorem~\ref{thm:final}). In this section we will see,
under slightly stronger assumptions, that fg-carried coalgebras are quotients of
fp-carried coalgebras, and in particular the locally finite fixpoint is a
quotient of the rational fixpoint: namely its image in the final coalgebra.

\begin{assumption}\label{projectiveassumption}
    In addition to \autoref{basicassumption}, assume that in the base category
    $\C$, every finitely presentable object is a strong quotient of a finitely
    presentable strong epi projective object and that the endofunctor $H$ also
    preserves strong epis.
\end{assumption}
The condition that every fg object is the strong quotient of a strong epi
projective often is phrased as \emph{having enough strong epi
projectives}~\cite{borceux1994handbook}. This assumption is apparently very strong but still is met 
in many situations:
\begin{example}
\begin{itemize}
    \item In categories in which all (strong) epis are split, every object is
    projective and any endofunctor preserves epis, e.g.~in \Set or $\Vec_K$.

    \item In the category of finitary endofunctors $\Funf(\Set)$, all
    polynomial functors are projective. The finitely presentable functors are
    quotients of polynomial functors $H_\Sigma$, where $\Sigma$ is a finite
    signature.

    \item In the Eilenberg-Moore category $\Set^T$ for a finitary monad $T$,
    strong epis are surjective $T$-algebra homomorphisms, and thus preserved by
    any endofunctor. In $\Set^T$, every free algebra $TX$ is projective; this is easy to see using the
    projectivity of $X$ in \Set. Every finitely generated object of $\Set^T$ is
    a strong quotient of some free algebra $TX$ with $X$ finite. For
    more precise definitions, see Section~4.1 %\ref{sec:powerset} 
    later.
\end{itemize}
\end{example}

\begin{proposition}\label{lfpquotient}
    Every coalgebra in $\Coalgfg H$ is a strong quotient of a coalgebra with
    finitely presentable carrier.
\end{proposition}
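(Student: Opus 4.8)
The plan is to exhibit the finitely generated carrier as a strong quotient of a projective finitely presentable object and then lift the coalgebra structure along this quotient by projectivity.

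Let $(C,c)$ be a coalgebra with $C$ finitely generated. I would first use the standard characterization of finitely generated objects in an lfp category (see~\cite{adamek1994locally}): $C$ is a strong quotient of a finitely presentable object, i.e.\ there is a strong epi $e_0\colon P_0 \twoheadrightarrow C$ with $P_0 \in \C_\fp$. By \autoref{projectiveassumption}, the finitely presentable object $P_0$ is in turn a strong quotient $q\colon P \twoheadrightarrow P_0$ of some finitely presentable strong epi projective object $P$. Strong epis are closed under composition, so $e \mathrel{:=} e_0 \cdot q\colon P \twoheadrightarrow C$ is a strong epi whose domain $P$ is finitely presentable and strong epi projective.

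Next I would lift the coalgebra structure along $e$. Since $H$ preserves strong epis (\autoref{projectiveassumption}), the morphism $He\colon HP \twoheadrightarrow HC$ is a strong epi, and applying strong epi projectivity of $P$ to $c \cdot e\colon P \to HC$ produces a morphism $p\colon P \to HP$ with $He \cdot p = c \cdot e$. This equation says exactly that $e\colon (P,p) \to (C,c)$ is an $H$-coalgebra homomorphism; as its underlying $\C$-morphism is a strong epi, it is strong epi carried, so $(C,c)$ is a strong quotient of $(P,p)$, whose carrier $P$ is finitely presentable, as required. (That a strong-epi-carried homomorphism is indeed a strong quotient in $\Coalg H$ follows from the lifting of strong epi/mono factorizations recalled in Section~\ref{sec:prelim}: factoring $e$ in $\Coalg H$ forces the mono part to be an isomorphism.)

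I do not expect a genuine obstacle: modulo the two ingredients above, the proof is a single diagonal fill-in. The one point that requires care is that one must lift against an object that is simultaneously projective \emph{and} finitely presentable; a plain finitely presentable cover $P_0$ of $C$ need not be projective, which is precisely why \autoref{projectiveassumption} (``enough strong epi projectives'' among finitely presentable objects) is invoked and why the argument routes through $P$ rather than directly through $P_0$.
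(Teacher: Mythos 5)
Your proposal is correct and follows essentially the same route as the paper's proof: take a finitely presentable cover of the fg carrier, refine it to a finitely presentable strong epi projective cover using Assumption~\ref{projectiveassumption}, and use that $H$ preserves strong epis to lift the coalgebra structure along the composite strong epi by projectivity. The only addition is your explicit remark that a strong-epi-carried homomorphism is a strong quotient in $\Coalg H$, which the paper leaves implicit.
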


\begin{theorem}
    $\LFF H$ is the image of the rational fixpoint $\varrho H$ in the final coalgebra.
\end{theorem}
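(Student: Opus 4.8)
The plan is to exhibit $\LFF H$ as the image of the unique coalgebra morphism $\varrho H \to \nu H$, and then to show this image coincides with $\LFF H$ as a subcoalgebra of $\nu H$. Since both $\varrho H$ and $\LFF H$ are constructed as colimits of the same base diagram up to quotients, the key is to compare the two colimit constructions through the final coalgebra.

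First I would recall that, by \autoref{thm:final}, $\LFF H$ is the colimit of the inclusion $\Coalgfg H \hookrightarrow \Coalglfg H$, and dually $\varrho H$ is the colimit of $\Coalgfp H \hookrightarrow \Coalg H$. The unique coalgebra morphism $\varrho H \to \nu H$ factors as $\varrho H \twoheadrightarrow I \rightarrowtail \nu H$ with the epi strong (in $\C$, lifting the mono/strong-epi factorization along the forgetful functor, using that $H$ preserves monos). It remains to identify $I$ with $\LFF H$. For the inclusion $I \hookrightarrow \LFF H$: since $\Coalgfp H \subseteq \Coalgfg H$, every fp-carried coalgebra maps into $\LFF H$, hence so does $\varrho H$ (by its universal property), and since $\LFF H \rightarrowtail \nu H$ is a subcoalgebra (by the previous theorem), the composite $\varrho H \to \LFF H \rightarrowtail \nu H$ agrees with $\varrho H \to \nu H$; thus $I$, being the image, is a subcoalgebra of $\LFF H$. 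For the reverse inclusion $\LFF H \hookrightarrow I$: by \autoref{lfpquotient}, every coalgebra in $\Coalgfg H$ is a strong quotient of one in $\Coalgfp H$; composing such a quotient $Y \twoheadrightarrow Z$ (with $Y$ fp-carried) with $Y \to \varrho H \twoheadrightarrow I$ and using the diagonal fill-in property shows $Z$ maps into $I$ as well. Since $\LFF H$ is the colimit of $\Coalgfg H$, we obtain a coalgebra morphism $\LFF H \to I$, and because $I \rightarrowtail \nu H$ and $\LFF H \rightarrowtail \nu H$ are both subcoalgebras over which these maps are compatible, this morphism is a (mono-carried) inclusion.

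Combining the two inclusions of subcoalgebras of $\nu H$ yields $I \cong \LFF H$, i.e.~$\LFF H$ is the image of $\varrho H$ in $\nu H$.

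The main obstacle I anticipate is the bookkeeping needed to ensure that the comparison maps $\varrho H \to \LFF H$ and $\LFF H \to I$ are genuinely coalgebra morphisms compatible with the embeddings into $\nu H$, rather than merely morphisms in $\C$; this requires carefully using that $\LFF H \rightarrowtail \nu H$ is mono-carried (so that equality of two maps into $\nu H$ forces equality into $\LFF H$) together with the diagonal fill-in property in $\Coalg H$, which is available because $H$ preserves monos and, under \autoref{projectiveassumption}, strong epis. A secondary point of care is that \autoref{lfpquotient} only gives quotients object-by-object, so assembling these into a single morphism out of the colimit $\LFF H$ relies essentially on \autoref{lfgcharacterization} (that $\LFF H$ is a filtered colimit of its fg-carried subcoalgebras).
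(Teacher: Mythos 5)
Your argument is correct, and it relies on the same two essential ingredients as the paper (the strong epi/mono factorization of $\varrho H \to \nu H$, and \autoref{lfpquotient} plus diagonal fill-in to transport unique morphisms from fp-carried covers to fg-carried coalgebras), but it is organized differently. The paper shows that the image $(B,b)$ is itself an lfg coalgebra (via \autoref{prop:lfgcolim} and \autoref{lfgquotients}) and then verifies the universal property of the final lfg coalgebra using \autoref{finalforfg}: existence of a morphism from each fg-carried coalgebra is the fill-in step, uniqueness comes from finality of $\nu H$ and $m$ monic; the identification $B \cong \LFF H$ then falls out of finality with no further work. You instead prove an antisymmetry statement for subobjects of $\nu H$: one inclusion $I \rightarrowtail \LFF H$ by assembling the unique morphisms from fp-carried coalgebras into $\LFF H$ into a cocone (using finality of $\LFF H$ in $\Coalglfg H$ for compatibility) and factoring through the mono $\LFF H \rightarrowtail \nu H$; the other inclusion $\LFF H \rightarrowtail I$ by your fill-in cocone over $\Coalgfg H$ and the colimit description from \autoref{thm:final}. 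What your route buys is that you never need to know that the image is lfg (you avoid \autoref{lfgquotients} and \autoref{finalforfg} altogether); what it costs is the extra half of the argument (the comparison $\varrho H \to \LFF H$) and a bit more bookkeeping, all of which you handle correctly: the cocone compatibilities and the well-definedness of the fill-ins $Z \to I$ follow, as you say, from finality of $\nu H$ together with the monomorphisms into it, and the mutual inclusions compose to identities because they commute with the embeddings into $\nu H$. Two small inaccuracies that do not affect soundness: $\varrho H$ is \emph{analogously} (not dually) the colimit of $\Coalgfp H \hookrightarrow \Coalg H$, and the diagonal fill-in itself needs only the (strong epi, mono) system of $\C$ and mono-preservation of $H$ to lift factorizations to $\Coalg H$; preservation of strong epis under \autoref{projectiveassumption} is what \autoref{lfpquotient} needs, not the fill-in.
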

\vspace{-2mm}
\begin{proof}
    Consider the factorization $(\varrho H,r)
    \overset{e}{\twoheadrightarrow} (B,b) \overset{m}{\rightarrowtail} (\nu
    H,\tau)$. Since $\varrho H$ is the colimit of all fp carried $H$-coalgebras it is an lfg coalgebra by~\autoref{prop:lfgcolim} using that fp objects are also fg. Hence, by~\autoref{lfgquotients} the coalgebra $B$ is lfg, too. By
    \autoref{finalforfg} it now suffices to show that from every $(X,x) \in \Coalgfg H$ there exists a unique coalgebra morphism into $(B,b)$. Given $(X,x)$ in $\Coalgfg H$, it is the quotient  $q: (P,p)
    \twoheadrightarrow (X,x)$ of an fp-carried coalgebra by
    \autoref{lfpquotient}. Hence, we obtain a unique coalgebra morphism
    $p^\dagger: (P,p) \to (\varrho H,r)$. By finality of $\nu H$, we have
    $m\cdot e\cdot p^\dagger = x^\dagger \cdot q$ (with $x^\dagger: (X,x)\to (\nu
    H,\tau)$). So the diagonal fill-in property induces a homomorphism $(X,x) \to (B,b)$, being the only homomorphism $(X,x)
    \to (B,b)$ by the finality of $\nu H$ and because $m$ is monic. 
    \qed
\end{proof}
\vspace{-2mm}

\section{Instances of the Locally Finite Fixpoint}
\label{sec:app}
We will now present a number of instances of the LFF. First note, that all the
known instances of the rational fixpoint (see e.g.~\cite{iterativealgebras,streamcircuits,bms13} are also instances of the locally finite fixpoint, because in all those cases the fp and fg objects coincide. For example, the class of regular languages is the rational fixpoint of $2×(-)^\Sigma$ on \Set. In this
section, we will study further instances of the LFF that are most likely not instances of the rational fixpoint and which -- to the best of
our knowledge -- have not been characterized by a universal property yet:
\begin{enumerate}
\item Behaviours of finite-state machines with side-effects as considered by
the generalized powerset construction (cf.~Section~4.1),%\ref{sec:powerset}),
particularly the following.
\begin{enumerate}
\item Deterministic and ordinary context-free languages obtained as the behaviours of deterministic and non-deterministic stack-machines, respectively.
\item Constructively $S$-algebraic formal power series, i.e.~the
\textqt{context-free} subclass of weighted languages with weights from a
semiring $S$, yielded from weighted context-free grammars.
\end{enumerate}
\item The monad of Courcelle's algebraic trees.
\vspace{-4mm}
\end{enumerate}

%\subsection{Generalized Powerset Construction}
%\label{sec:powerset}
\densesubsection{Generalized Powerset Construction}
The determinization of a non-deterministic automaton using the powerset
construction is an instance of a more general framework, described by
\citet*{generalizeddeterminization} based on an observation by \citet{bartelsphd} (see also \citet{Jacobs05abialgebraic}). 
In that \emph{generalized powerset
construction}, an automaton with side-effects is turned into an ordinary
automaton by internalizing the side-effects in the states. The LFF
interacts well
with this construction, because it precisely captures the behaviours of
finite-state automata with side effects. The notion of side-effect is formalized
by a monad, which induces the category, in which the LFF is considered.

\begin{notes}
\begin{itemize}
  \item[] Some thoughts:
  \item conceptually, one starts with $X \to HTX$ with $T$ a monad,
  and obtains $FX \to H^T (FX)$, that is an coalgebra with free
  carrier. This corresponds to shifting focus from
  $\mathsf{Set}$-coalgebras to coalgebras over the Eilenberg-Moore
  category induced by $T$.
  \item In particular, in EM categories fp and fg don't generally
  agree.
  \item We show that the LFFP captures \emph{precisely} the
  behaviour of determinized EM-coalgebras, that is, of all
  coalgebras with free carrier.
\end{itemize}
\end{notes}

In the following we assume that readers are familiar with monads and Eilenberg-Moore algebras (see e.g.~\cite{lane1998categories} for an introduction). For a monad $T$ on $\C$ we denote by $\C^T$ the category of Eilenberg-Moore algebras. Recall from~\cite[Corollary~2.75]{adamek1994locally} that if $\C$ is lfp (in most of our examples $\C$ is \Set) and $T$ is finitary then $\C^T$ is lfp, too, and for every fp object $X$ the free Eilenberg-Moore algebra $TX$ is fp in $\C^T$. In all the examples we consider below, the classes of fp and fg objects either provably differ or it is still unknown whether these classes coincide.
\vspace{-3mm}

\takeout{ % taken out to safe space
\paragraph{Monads.}Recall that a (finitary)
\emph{monad} on $\C$ is a (finitary) endofunctor $T: \C\to \C$ with two natural
transformations $\eta: \Id\to T$, $\mu: T^2 \to T$ such that $\mu\cdot \mu T =
\mu \cdot T\mu$ and $\id_T = \mu\cdot \eta T = \mu \cdot T\eta$.

\noindent\partitle{Algebras for a monad.}
Each monad induces the so called \emph{Eilenberg-Moore category $\C^T$}, having
as objects the \emph{Eilenberg-Moore algebras}, i.e.~algebras $(A, \alpha: TA\to A)$
fulfilling $\alpha\cdot \eta_A = \id_A$ and $\alpha \cdot \mu_A = \alpha\cdot
T\alpha$, together with $T$-algebra homomorphisms as morphisms. The functor $F:
\C\to \C^T$ sending objects $X$ to free algebras $(TX, \mu_X)$ and the
forgetful functor form an adjoint pair $F: \smash{\adjointpair} :U$.
If $\C$ is lfp (mostly in our cases, $\C$ is \Set) and $T$ is finitary, then
$\C^T$ is lfp and $F$ preserves fp~objects
\cite[Corollary~2.75]{adamek1994locally}. In the examples we consider, the classes of fp and fg
objects either provably differ or it is still unknown whether these classes coincide.
} % end takeout

\begin{example} \label{exceptionmonad}
  In Sections~4.4 %\ref{sec:cfl} 
and~4.5 %\ref{sec:alg} 
we are going to make use of Moggi's exception monad transformer (see e.g.~\cite{cm93}). Let us recall that for a fixed object $E$, the finitary functor $(-)+E$ together with the unit
    $\eta_X = \inl: X\to X+E$ and multiplication $\mu_X = \id_X+[\id_E,\id_E]:
    X+E+E \to X+E$ form a finitary monad, the \emph{exception monad}. Its
    algebras are $E$-pointed objects, i.e.~objects $X$, together with a morphism $E\to X$, and homomorphisms are morphisms preserving the pointing. So the induced Eilenberg-Moore category is just the slice category $\C^{(-)+E} \cong E/\C$.
    
    Now, given any monad $T$ we obtain a new monad $T(- + E)$ with obvious unit and multiplication. An Eilenberg-Moore algebra for $T(-+E)$ consists of an Eilenberg-Moore algebra for $T$ and an $E$-pointing, and homomorphisms are $T$-algebra homomorphisms preserving the pointing~\cite{combiningeffects}.  
\end{example}
\vspace{-2mm}
Now an automaton with side-effects is modelled as an $HT$-coalgebra, where $T$ is a finitary monad on $\C$ providing the type of side-effect. For example, for $HX = 2 \times X^\Sigma$, where $\Sigma$ is an input alphabet, $2 = \{0,1\}$ and $T$ the finite powerset monad on \Set, $HT$-coalgebras are non-deterministic automata. However, the coalgebraic semantics using the final $HT$-coalgebra does not yield the usual language semantics of non-deterministic automata. To obtain this one considers the final coalgebra of a lifting of $H$ to $\C^T$. Denote by $U: \C^T \to \C$ the canonical forgetful functor.
\begin{definition}\label{deflifting}
    For a functor $H: \C\to \C$ and a monad $T: \C \to \C$,
    a \emph{lifting} of $H$ is a functor $H^T: \C^T\to \C^T$ such that
    $H\cdot U = U \cdot H^T$.
\end{definition}
If such a (not necessarily unique) lifting exists, the generalized powerset
construction transforms an $HT$-coalgebra into a $H^T$-coalgebra on $\C^T$:
For a coalgebra $x: X\to HTX$, $HTX$ carries an Eilenberg-Moore algebra, and one
uses freeness of the Eilenberg-Moore algebra $TX$ to obtain a canonical
$T$-algebra homomorphism $x^\sharp: (TX,\mu^T) \to H^T(TX,\mu^T)$. The
\emph{coalgebraic language semantics} of $(X,x)$ is then given by $X
\xrightarrow{\eta_X} TX \smash{\xrightarrow{x^{\sharp\dagger}}} \nu H^T$, i.e.~by composing the unique coalgebra morphism induced by $x^\sharp$ with $\eta_X$.  
%\[
%    \begin{tikzcd}
%        X
%        \arrow{r}{\eta_X}
%        \arrow{d}[left]{x}
%        &
%        TX
%        \arrow{r}{x^{\sharp\dagger}}
%        \arrow[dashed]{dl}{x^\sharp}
%        &
%        \nu H
%        \arrow{d}{\tau}
%        \\
%        HTX
%        \arrow{rr}{Hx^{\sharp\dagger}}
%        &&
%        H \nu H
%    \end{tikzcd}
%\]
This construction yields a functor
%This principle corresponds to shifting focus from $\Set$-coalgebras to
%coalgebras in the Eilenberg-Moore category of a monad $T$ and can be interpreted
%as a functor
\(
     T': \Coalg (HT) \to \Coalg H^T
\)
mapping coalgebras $X\xrightarrow{x} HTX$ to $x^\sharp$ and homomorphisms $f$ to
$Tf$ (see e.g.~\cite[Proof of Lemma 3.27]{bms13} for a proof).

Now our aim is to show that the LFF of $H^T$ characterizes precisely the coalgebraic language semantics of all fp-carried
$HT$-coalgebras. As the right adjoint $U$ preserves monos and is faithful, we
know that $H^T$ preserves monos, and as $T$ is finitary, filtered colimits in
$\C^T$ are created by the forgetful functor to $\C$, and we therefore see that $H^T$ is finitary. Thus, by Theorem~\ref{thm:final}, 
$\LFF H^T$ exists and is a subcoalgebra of $\nu H^T$. By \cite{plotkinturi97} and \cite[Corollary~3.4.19]{bartelsphd}, we know that $\nu H^T$ is carried by $\nu H$
equipped with a canonical algebra structure. %, see \cite[Notation 3.22]{bms13}.

Now let us turn to the desired characterization of $\LFF H^T$. Formally, the coalgebraic language semantics of all fp-carried $HT$-coalgebras is collected by forming the colimit $k: K \to HK$ of the diagram
\(
    %D = \big(
    \Coalgfg HT \xrightarrow{T'} \Coalg H^T \xrightarrow{U} \Coalg H.
    %\big).
\)
%Denote its colimit by $k: K\to HK$ with injections $\inj_X: (TX,x^\sharp)\to (K,k)$. % injections not used below
%However, not all identical behaviours are necessarily identified in $K$.
%But identifying those gives precisely the locally finite fixpoint of $H^T$ 
This coalgebra $K$ is not yet a subcoalgebra of $\nu H$ (for $\C = \Set$ that
means, not all behaviourally equivalent states are identified in $K$), but taking its image in $\nu H$ we obtain the LFF:
\begin{proposition} \label{firstLFFImage}
    The image $(I,i)$ of the unique coalgebra morphism $k^\dagger: K \to \nu H^T$ is precisely the locally finite fixpoint
    of the lifting $H^T$.% , i.e.~$(I,i) \cong (U\LFF H^T, U\ell)$. -- würde ich weglassen, die U's machen die Sache eher komplizierter...
\end{proposition}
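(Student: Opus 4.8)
\medskip\noindent\textbf{Proof plan.}
The plan is to lift the whole construction to $\Coalg H^T$ and then appeal to \autoref{finalforfg}. Since $T$ is finitary, the forgetful functor $U\colon\Coalg H^T\to\Coalg H$ preserves filtered colimits, and $\Coalgfg HT$ is essentially small and closed under finite colimits (coalgebras with finitely generated carrier are closed under finite coproducts and strong quotients), hence filtered. So the colimit $\overline K$ of $T'\colon\Coalgfg HT\to\Coalg H^T$ exists, $U\overline K=K$, and the unique morphism $\overline K\to\nu H^T$ is sent by $U$ to $k^\dagger$. Each value $T'(X,x)=(TX,x^\sharp)$ has a finitely generated carrier in $\C^T$ (because $X$ finitely generated in $\C$ implies $TX$ finitely generated in $\C^T$, using $\C^T(TX,-)\cong\C(X,U-)$ and that $U$ preserves monos and filtered colimits), so $\overline K$ is a filtered colimit of coalgebras from $\Coalgfg H^T$, and therefore lfg by \autoref{prop:lfgcolim}.

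Next I would factor $\overline K\to\nu H^T$ in $\Coalg H^T$ into a strong-epi-carried morphism $e\colon\overline K\twoheadrightarrow\overline I$ and a mono-carried morphism $m\colon\overline I\rightarrowtail\nu H^T$ (this exists since $H^T$ preserves monos). As $U$ preserves monos and strong epis, its $U$-image is the factorization of $k^\dagger$, so $(I,i)$ is the underlying $H$-coalgebra of $\overline I$; it then remains to identify the $H^T$-coalgebra $\overline I$ with $\LFF H^T$. By \autoref{lfgquotients}, $\overline I$ is again lfg, and it is a subcoalgebra of $\nu H^T$; hence, by \autoref{finalforfg}, it suffices to produce for every $(Y,y)\in\Coalgfg H^T$ a unique coalgebra morphism $(Y,y)\to\overline I$ -- for then $\overline I$ is the final lfg coalgebra and so coincides with $\LFF H^T$, both being lfg subcoalgebras of $\nu H^T$. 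Uniqueness is immediate since $m$ is monic and $\nu H^T$ is final.

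For existence, the strategy is to exhibit $(Y,y)$ as a strong quotient, in $\Coalg H^T$, of some $T'(X,x)$ with $(X,x)\in\Coalgfg HT$. Granting this, with colimit injection $\kappa\colon T'(X,x)\to\overline K$ and strong epi $p\colon T'(X,x)\twoheadrightarrow(Y,y)$, finality of $\nu H^T$ forces $m\cdot e\cdot\kappa=u\cdot p$, where $u\colon(Y,y)\to\nu H^T$ is the unique morphism, and the diagonal fill-in for the strong epi $p$ and the mono $m$ produces the required $(Y,y)\to\overline I$. To get the presentation I would argue: $\C^T$ is lfp with the free algebras on finitely presentable objects of $\C$ being finitely presentable, so $Y$ -- finitely generated in $\C^T$ -- is a strong quotient $q\colon TX\twoheadrightarrow Y$ of the free algebra on some finitely presentable $X\in\C$; under \autoref{projectiveassumption} for $H^T$ on $\C^T$ (which holds in all our instances, e.g.~automatically when $\C=\Set$) one may moreover take $X$ to be strong epi projective, so that $Hq\colon HTX\twoheadrightarrow HY$ is a strong epi and projectivity of $X$ yields a lift $x\colon X\to HTX$ of $y\cdot q\cdot\eta_X$ along $Hq$. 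Then $(X,x)$ is an $HT$-coalgebra with finitely generated carrier, and since two $T$-algebra homomorphisms out of the free algebra $TX$ coincide as soon as they agree after precomposition with $\eta_X$, a short computation (using $x^\sharp\cdot\eta_X=x$) shows that $q$ is a coalgebra morphism $T'(X,x)=(TX,x^\sharp)\to(Y,y)$; carried by the strong epi $q$, it is a strong epi in $\Coalg H^T$.

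The step I expect to be the main obstacle is this last presentation: an arbitrary finitely-generated-carried $H^T$-coalgebra need not be free-carried, so one must transport its transition structure along a free presentation $TX\twoheadrightarrow Y$, and this is precisely where strong epi projectivity of the finitely presentable generating object -- and hence the extra hypotheses of \autoref{projectiveassumption}, innocuous in the $\Set$-based instances -- is needed. Everything else is a routine lift-and-factor.
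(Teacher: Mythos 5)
Your proof is correct, and it rests on the same key fact as the paper's argument --- that every coalgebra in $\Coalgfg H^T$ is a strong quotient of a determinized coalgebra $(TX,x^\sharp)$ with $(X,x)\in\Coalgfg HT$ --- but the surrounding logic is organized differently. The paper does not re-verify a finality criterion for the image: it uses finality of the already-constructed $\LFF H^T$ over the coalgebras $(TX,x^\sharp)$ to induce a morphism $w\colon (K,k)\to (U\LFF H^T,U\ell)$ out of the colimit cocone, then argues that the $x^{\sharp\dagger}$ are jointly epic (each $(G,g)\in\Coalgfg H^T$ is a quotient of some $(TX,x^\sharp)$, and the $g^\dagger$ are jointly epic into their colimit $\LFF H^T$), so $w$ is epic and, since we are in $\Set$, a strong epi; uniqueness of (strong epi, mono) factorizations then identifies $(I,i)$ with $U\LFF H^T$. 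You instead lift the whole construction to $\Coalg H^T$, show the image $\overline I$ of $\overline K\to\nu H^T$ is lfg, and verify the criterion of \autoref{finalforfg} directly via the quotient presentation and diagonal fill-in --- in effect transplanting the paper's proof that $\LFF H$ is the image of $\varrho H$ to the colimit $K$. The paper's route is shorter given the available machinery but leans on ``epi implies strong epi'' in $\Set$ at the coalgebra level; yours avoids that step, makes explicit what the paper leaves implicit ($TX$ is fg in $\C^T$, the diagram $\Coalgfg HT$ is filtered, and a proof rather than an assertion of the presentation lemma via projectivity and strong-epi preservation), and you correctly locate where the $\Set$-specific (respectively \autoref{projectiveassumption}-style) hypotheses enter --- exactly the same hidden reliance as in the paper's one-line claim that any $(G,g)$ in $\Coalgfg H^T$ is a quotient of some $(TX,x^\sharp)$.
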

%So when combining all behaviours in $K$ and then identifying behaviours gives the LFF.
%
%\enlargethispage{5pt}
One can also directly take the union of all desired behaviours, for $\C = \Set$:
%, because -- roughly speaking -- epis commute with colimits:
\begin{theorem} \label{prop:LFFunion}
    The locally finite fixpoint of the lifting $H^T$ comprises precisely the
    images of determinized $HT$-coalgebras:
\begin{equation}
\label{LFFunion}
    \LFF H^T = \ \bigcup_{\mathclap{\substack{x: X\to HTX\\
                           X\textrm{ \upshape finite}}}}
        \ x^{\sharp\dagger}[TX]
    = \ \bigcup_{\mathclap{\substack{x: X\to HTX\\
                           X\textrm{ \upshape finite}}}}
        \ x^{\sharp\dagger}\cdot\eta_X^T[X]
        \subseteq \nu H^T.
\end{equation}
\end{theorem}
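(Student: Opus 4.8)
The plan is to deduce the first equality from \autoref{firstLFFImage} by unwinding, in \Set, the colimit defining $K$, and to prove the second equality by a small ``fresh state'' construction. For the first equality, recall that by \autoref{firstLFFImage} we have $\LFF H^T = \Im(k^\dagger)$, where $k\colon K\to HK$ is the colimit of $\Coalgfg HT \xrightarrow{T'} \Coalg H^T \xrightarrow{U} \Coalg H$ and $k^\dagger\colon K\to \nu H$ is the unique coalgebra morphism. In \Set the index category $\Coalgfg HT$ is filtered (finite colimits of finite sets are finite) and the forgetful functor $\Coalg H\to\Set$ creates colimits, so the underlying set of $K$ is the filtered colimit of the sets $TX$ over all $x\colon X\to HTX$ with $X$ finite, with colimit injections $c_{(X,x)}\colon TX\to K$; hence every element of $K$ lies in some $c_{(X,x)}[TX]$, and each $c_{(X,x)}$ underlies a coalgebra morphism $(TX,x^\sharp)\to (K,k)$. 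By finality of $\nu H$ the composite $k^\dagger\cdot c_{(X,x)}\colon (TX,x^\sharp)\to\nu H$ equals $x^{\sharp\dagger}$, so $\LFF H^T = \Im(k^\dagger) = k^\dagger[K] = \bigcup_{(X,x)} k^\dagger[c_{(X,x)}[TX]] = \bigcup_{(X,x)} x^{\sharp\dagger}[TX]$.

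For the second equality the inclusion $\supseteq$ is clear since $\eta_X^T[X]\subseteq TX$. For $\subseteq$ I would fix $x\colon X\to HTX$ with $X$ finite and an element $u\in TX$, and exhibit a finite coalgebra $y\colon Y\to HTY$ together with a state $s\in Y$ such that $x^{\sharp\dagger}(u) = y^{\sharp\dagger}(\eta_Y^T(s))$. Take $Y:=X+1$ with left injection $\iota\colon X\hookrightarrow Y$ and fresh state $\ast\in Y$, and define $y$ by $y\cdot\iota := HT\iota\cdot x$ and $y(\ast):= HT\iota(x^\sharp(u))$. Checking on free generators that $y^\sharp\cdot T\iota$ and $HT\iota\cdot x^\sharp$ agree (both are $T$-algebra morphisms, and on $\eta_X[X]$ they both restrict to $HT\iota\cdot x$), one gets that $T\iota\colon (TX,x^\sharp)\to (TY,y^\sharp)$ is a coalgebra morphism, whence $y^{\sharp\dagger}\cdot T\iota = x^{\sharp\dagger}$ by finality.

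To finish I would use that the structure map $\tau\colon \nu H\to H\nu H$ is an isomorphism (Lambek's lemma): since $y^{\sharp\dagger}$ is a coalgebra morphism, $\tau(y^{\sharp\dagger}(\eta_Y^T(\ast))) = Hy^{\sharp\dagger}(y^\sharp(\eta_Y^T(\ast))) = Hy^{\sharp\dagger}(HT\iota(x^\sharp(u))) = H(y^{\sharp\dagger}\cdot T\iota)(x^\sharp(u)) = Hx^{\sharp\dagger}(x^\sharp(u)) = \tau(x^{\sharp\dagger}(u))$, and injectivity of $\tau$ yields $y^{\sharp\dagger}(\eta_Y^T(\ast)) = x^{\sharp\dagger}(u)\in (y^{\sharp\dagger}\cdot\eta_Y^T)[Y]$, as needed. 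The first equality is essentially routine bookkeeping — transporting \autoref{firstLFFImage} into \Set\ and using that images commute with filtered colimits of sets — so the real content, and the step I expect to need the most care, is the ``fresh state'' construction: one must see that $y(\ast)$ can be defined \emph{non-circularly} as $HT\iota(x^\sharp(u))$ precisely so that $T\iota$ becomes a coalgebra morphism, after which invertibility of $\tau$ closes the argument. (Alternatively, the first equality can be obtained without \autoref{firstLFFImage}: $\LFF H^T$ is the directed union of its fg-carried subcoalgebras, and by the analogue of \autoref{lfpquotient} for $H^T$ on $\C^T$ each of these is a strong quotient of some $T'(X,x)$, hence has image $x^{\sharp\dagger}[TX]$ in $\nu H$.)
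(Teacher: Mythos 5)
Your proposal is correct and follows essentially the same route as the paper: the first equality by combining \autoref{firstLFFImage} with the fact that the image of the colimit $K$ is the union of the images $x^{\sharp\dagger}[TX]$, and the second equality by exactly the paper's fresh-state coalgebra $y = HT\inl\cdot[x,x^\sharp(u)]$ on $X+1$. The only (harmless) variation is in the second step: the paper uses the coalgebra homomorphism $T(X+1)\to TX$ induced by $[\eta_X,u]$, which sends the fresh generator to $u$ and gives $y^{\sharp\dagger}(\eta_Y(\ast))=x^{\sharp\dagger}(u)$ immediately by finality, whereas you go the other way via $T\iota$ and then invoke invertibility of $\tau$ — both are valid.
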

This result suggests that the locally finite fixpoint is the right
object to consider in order to represent finite behaviour. We now
instantiate the general theory with examples from the literature to
characterize several well-known notions as LFF.

%\subsection{The Languages of Non-deterministic Automata}
\densesubsection{The Languages of Non-deterministic Automata}
Let us start with a simple standard example. We already mentioned that non-de\-ter\-mi\-nis\-tic automata are coalgebras for the functor $X \mapsto 2 \times \Potf(X)^\Sigma$. Hence they are $HT$-coalgebras for $H = 2 \times (-)^\Sigma$ and $T = \Potf$ the finite powerset monad on $\Set$. The above generalized powerset construction then instantiates as the usual powerset construction that assigns to a given non-deterministic automaton its determinization. 

Now note that the final coalgebra for $H$ is carried by the set $\mathcal L = \Pot(\Sigma^*)$ of all formal languages over $\Sigma$ with the coalgebra structure given by $o: \mathcal L \to 2$ with $o(L) = 1$ iff $L$ contains the empty word and $t: \mathcal L \to \mathcal{L}^\Sigma$ with $t(L)(s) = \{ w \mid sw \in L\}$ the left language derivative. The functor $H$ has a canonical lifting $H^T$ on the Eilenberg-Moore category of $\Potf$, viz.~the category of join semi-lattices. The final coalgebra $\nu H^T$ is carried by all formal languages with the join semi-lattice structure given by union and $\emptyset$ and with the above coalgebra structure. Furthermore, the coalgebraic language semantics of $x: X \to HTX$ assigns to every state of the non-deterministic automaton $X$ the language it accepts. Observe that join semi-lattices form a so-called \emph{locally finite variety}, i.e.~the finitely presentable algebras are precisely the finite ones. Hence, Theorem~\ref{prop:LFFunion} states that the LFF of $H^T$ is precisely the subcoalgebra of $\nu H^T$ formed by all languages accepted by finite NFA, i.e.~regular languages. 

Note that in this example the LFF and the rational fixpoint coincide since both fp and fg join semi-lattices are simply the finite ones. Similar characterizations of the coalgebraic language semantics of finite coalgebras follow from Theorem~\ref{prop:LFFunion} in other instances of the generalized powerset construction from~\cite{generalizeddeterminization} (cf.~e.g.~the treatment of the behaviour of finite weighted automata in~\cite{bms13}).  

We now turn to examples that, to the best of our knowledge, cannot be treated using the rational fixpoint. 
 
%\subsection{The Behaviour of Stack Machines}
\densesubsection{The Behaviour of Stack Machines}
Push-down automata are finite state machines with infinitely many
configurations.
It is well-known that deterministic and non-deterministic
pushdown automata recognize different classes of context-free languages. We will
characterize both as instances of the locally finite fixpoint, using the results from \cite{coalgchomsky}
on stack machines, which can push or read
\emph{multiple} elements to or from the stack in a single transition, respectively.

That is, a transition of a stack machine in a certain state consists of reading
an input character, going to a successor state based on the stack's topmost
elements and of modifying the topmost elements of the stack. These stack
operations are captured by the stack monad.

\begin{definition}[{Stack monad, \cite[Proposition 5]{goncharov13}}]
    For a finite set of stack symbols $\Gamma$, the \emph{stack monad} is the
    submonad $T$ of the store monad $(-×\Gamma^*)^{\Gamma^*}$ for which the
    elements $\fpair{r,t}$ of $TX \subseteq (X×\Gamma^*)^{\Gamma^*}
    \cong X^{\Gamma^*} \times (\Gamma^*)^{\Gamma^*}\!$ satisfy the
    following restriction: there exists $k$ depending on $r,t$ such that for
    every $w\in \Gamma^k$ and $u\in \Gamma^*$, $r(wu) =r(w)$ and $t(wu) =
    t(w)u$.
    \vspace{-2mm}
\end{definition}

\noindent
Note that the parameter $k$ gives a bound on how may of the topmost stack cells the machine can
access in one step. 

Using the stack monad, stack machines are $HT$-coalgebras, where $H=B×(-)^\Sigma$ is the
Moore automata functor for the finite input alphabet $\Sigma$ and the set $B$ of
all predicates mapping (initial) stack configurations to output values from 2,
taking only the topmost $k$ elements into account:
\(
    B = \{
        p \in 2^{\Gamma^*}
        \mid
        \exists k\in \N_0: 
        \forall w,u\in \Gamma^*, |w|\ge k:
        p(wu) = p(w)
    \} \subseteq 2^{\Gamma^*}
\).

\noindent
The final coalgebra $\nu H$ is carried by $B^{\Sigma^*}$ which is
(modulo power laws) a set of predicates, mapping stack configurations to formal
languages. \citet{coalgchomsky} show that $H$ lifts to $\Set^T$ and
conclude that
finite-state $HT$-coalgebras match the intuition of \emph{deterministic}
pushdown automata without spontaneous transitions. The languages
accepted by those automata are precisely the \emph{real-time deterministic context-free
languages}; this notion goes back to \citet{HarrisonHavel72}.
We obtain the following, with $\gamma_0$ playing
the role of an initial symbol on the stack:

\begin{theorem}
    The locally finite fixpoint of $H^T$ is carried by the set of all maps $f \in B^{\Sigma^*}$ such that for any fixed $\gamma_0 \in \Gamma$, 
    \(
        \{ w\in \Sigma^* \mid f(w)(\gamma_0) = 1\}
    \)
    is a real-time deterministic context-free language.
\end{theorem}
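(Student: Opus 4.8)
The plan is to read off the statement from Theorem~\ref{prop:LFFunion} together with the automata-theoretic analysis of stack machines in~\cite{coalgchomsky} and the classical characterisation of real-time deterministic context-free languages due to Harrison and Havel~\cite{HarrisonHavel72}. Theorem~\ref{prop:LFFunion} tells us that $\LFF H^T = \bigcup_{X\ \mathrm{finite}} x^{\sharp\dagger}\cdot\eta_X^T[X] \subseteq \nu H^T$, and $\nu H^T$ is carried by $B^{\Sigma^*}$; hence an $f\in B^{\Sigma^*}$ lies in $\LFF H^T$ precisely when $f = x^{\sharp\dagger}(\eta_X^T(q))$ for some $HT$-coalgebra $x\colon X\to HTX$ with $X$ finite and some state $q\in X$. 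The second ingredient is the dictionary of~\cite{coalgchomsky}: a finite-state $HT$-coalgebra $(X,x)$ together with a chosen state $q$ and a chosen stack symbol $\gamma_0\in\Gamma$ is exactly a real-time deterministic pushdown automaton without spontaneous transitions, and under this correspondence the coalgebraic language semantics evaluated at $\gamma_0$, i.e.~$w\mapsto x^{\sharp\dagger}(\eta_X^T(q))(w)(\gamma_0)$, is the language accepted by that automaton; by~\cite{HarrisonHavel72} the languages obtained in this way are precisely the real-time deterministic context-free languages.

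For the left-to-right inclusion let $f\in\LFF H^T$, say $f = x^{\sharp\dagger}(\eta_X^T(q))$ with $(X,x)$ finite. Fixing any $\gamma_0\in\Gamma$, the dictionary above identifies $\{w\mid f(w)(\gamma_0)=1\}$ with the language of the real-time deterministic pushdown automaton $(X,x,q,\gamma_0)$, which is therefore real-time deterministic context-free. Thus $f$ belongs to the set described in the statement.

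For the converse inclusion, suppose $f\in B^{\Sigma^*}$ is such that $L_\gamma(f):=\{w\mid f(w)(\gamma)=1\}$ is real-time deterministic context-free for every $\gamma\in\Gamma$. I would, using~\cite{HarrisonHavel72} and~\cite{coalgchomsky}, pick for each $\gamma$ a finite-state $HT$-coalgebra realising $L_\gamma(f)$ from its initial configuration, normalise all of them to share the fixed stack alphabet $\Gamma$ (encoding any auxiliary stack symbols as bounded blocks over $\Gamma$; this preserves real-timeness because the stack monad permits inspecting and rewriting a bounded number of topmost cells), and then combine their carriers into a single finite $(X,x)$ with suitable states, producing a candidate element $x^{\sharp\dagger}(\eta_X^T(q))\in\LFF H^T$. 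The main obstacle is that $\LFF H^T$ lives inside all of $\nu H^T = B^{\Sigma^*}$, so this candidate must agree with $f$ not only on the single-symbol stacks $\gamma\in\Gamma$ but on every configuration $s\in\Gamma^*$; matching only the single-symbol behaviours is not obviously enough, since $f(w)(s)$ for longer $s$ encodes the usual ``through-a-stack-symbol'' transition data of a pushdown automaton. Overcoming this is where the detailed combinatorics of the stack monad from~\cite{coalgchomsky} is needed: one must either assemble the realising coalgebra more carefully than by a naive coproduct, or argue that for elements of $\nu H^T$ the stack-parametrised behaviour of a real-time deterministic pushdown automaton is already pinned down by — and can be reconstructed from — its single-symbol-initialised behaviours, exploiting that $x^{\sharp\dagger}$ is a $T$-algebra homomorphism and that $\nu H^T$ is the final $H^T$-coalgebra, so that the stack-push operations of the $T$-algebra $\nu H^T$ generate all configurations from the single symbols $\gamma\in\Gamma$. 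I expect this verification to be the technical heart of the proof.
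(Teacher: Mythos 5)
Your first inclusion is exactly the paper's argument: combine \eqref{LFFunion} (i.e.\ \autoref{prop:LFFunion}) with the characterization from \cite[Theorem~5.5]{coalgchomsky}. The gap is the converse inclusion, which you explicitly leave open. The paper does not carry out anything like the construction you sketch (one machine per stack symbol, normalisation of stack alphabets, recombination, reconstruction of the behaviour on arbitrary configurations): its entire proof is the citation. The point is the precise form of \cite[Theorem~5.5]{coalgchomsky}, which is an equivalence at the level of the maps $f$ rather than only of languages: $L$ is a real-time deterministic context-free language if and only if there are a finite coalgebra $x\colon X\to HTX$, a state $s\in X$ and a symbol $\gamma_0\in\Gamma$ such that $f = x^{\sharp\dagger}\cdot\eta^T_X(s)$ and $L=\{w\mid f(w)(\gamma_0)=1\}$. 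So for every real-time deterministic context-free language the realizing finite-state behaviour $f\in\nu H^T$ comes packaged with the statement, and \eqref{LFFunion} finishes the proof with no further combinatorics of the stack monad. The ``technical heart'' you anticipate is thus outsourced to \cite{coalgchomsky}, not re-proved in the paper.

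That said, your worry about longer stack configurations is well taken, and it shows that the proof strategy you outline for the literal set-builder reading of the statement would not go through: the values $f(w)(s)$ for $|s|>1$ are not determined by the single-symbol languages $L_{\gamma_0}(f)$, so one cannot in general reconstruct $f$ from them. Indeed, one can choose $f\in B^{\Sigma^*}$ with all $L_{\gamma_0}(f)$ empty (hence real-time deterministic context-free) while the values on longer stacks encode an arbitrary, even non-computable, function of $w$; such an $f$ lies in no $x^{\sharp\dagger}[TX]$ with $X$ finite and hence not in $\LFF H^T$ by \eqref{LFFunion}. What the paper's proof actually establishes is the correspondence between the elements of $\LFF H^T$ --- the behaviours $x^{\sharp\dagger}\cdot\eta^T_X(s)$ of finite-state stack machines --- and the real-time deterministic context-free languages obtained from them by fixing an initial stack symbol. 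So: your easy direction matches the paper; to close the converse you should invoke \cite[Theorem~5.5]{coalgchomsky} in the $f$-level form above (reading the theorem as this correspondence), rather than attempt to rebuild an arbitrary $f$ from its single-symbol languages, which is not possible and is not what the paper does.
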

\begin{proof}
    By \cite[Theorem 5.5]{coalgchomsky}, a language $L$ is a real-time
    deterministic context-free language iff there exists some $x: X\to
    HTX$, $X$ finite, with its determinization $x^\sharp: TX \to HTX$ and there
    exist $s\in X$ and $\gamma_0 \in \Gamma$ such that $f =
    x^{\sharp\dagger}\cdot\eta_X^T(s) \in B^{\Sigma^*}$ and $f(w)(\gamma_0) = 1$ for
    all $w\in \Sigma^*$. The rest follows by \eqref{LFFunion}.
    \qed
\end{proof}
Just as for pushdown automata, the expressiveness of stack machines increases when
equipping them with non-determinism. Technically, this is done by considering
the \emph{non-deterministic stack monad} $T'$, i.e.~$T'$ denotes a submonad of the
non-deterministic store monad $\Potf(-×\Gamma^*)^{\Gamma^*}$, as described in
\cite[Section 6]{coalgchomsky}. In the non-deterministic setting, a similar property
holds, namely that the determinized $HT'$-coalgebras with finite carrier
describe precisely the context-free languages \cite[Theorem 6.5]{coalgchomsky}.
Combine this with \eqref{LFFunion}:
%\vspace{-2mm}
\begin{theorem}
    The locally finite fixpoint of $H^{T'}$ is carried by the set of all maps $f \in B^{\Sigma^*}$ such that for any fixed $\gamma_0 \in \Gamma$,
    \(
        \{ w\in \Sigma^* \mid f(w)(\gamma_0) = 1\}
    \)
    is a context-free language.
\end{theorem}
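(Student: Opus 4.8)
The plan is to reduce the statement for the non-deterministic stack monad $T'$ to exactly the same shape as the deterministic case, then invoke the union formula~\eqref{LFFunion} together with the characterization of context-free languages from~\cite{coalgchomsky}. First I would check that the non-deterministic stack monad $T'$ is again a finitary monad on \Set and that $H = B \times (-)^\Sigma$ lifts to $\Set^{T'}$; this is stated in~\cite[Section 6]{coalgchomsky}, so under \autoref{basicassumption} the lifting $H^{T'}$ preserves monos and is finitary (the forgetful functor $U: \Set^{T'} \to \Set$ creates filtered colimits and reflects monos), and hence $\LFF H^{T'}$ exists and is a subcoalgebra of $\nu H^{T'}$ by Theorem~\ref{thm:final}. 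As in the deterministic case, $\nu H^{T'}$ is carried by $B^{\Sigma^*}$ with a canonical $T'$-algebra structure.

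Next I would apply Theorem~\ref{prop:LFFunion} to $H^{T'}$: the carrier of $\LFF H^{T'}$ is the union $\bigcup x^{\sharp\dagger}\cdot\eta_X^{T'}[X]$ over all coalgebras $x: X \to HT'X$ with $X$ finite. The key external input is~\cite[Theorem 6.5]{coalgchomsky}, which says that a language $L \subseteq \Sigma^*$ is context-free if and only if there is a finite $x: X \to HT'X$ with determinization $x^\sharp$, a state $s \in X$ and a symbol $\gamma_0 \in \Gamma$ such that, writing $f = x^{\sharp\dagger}\cdot\eta_X^{T'}(s) \in B^{\Sigma^*}$, we have $L = \{ w \in \Sigma^* \mid f(w)(\gamma_0) = 1 \}$. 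Combining this equivalence with the union formula gives exactly that $\LFF H^{T'}$ is carried by the set of all $f \in B^{\Sigma^*}$ for which $\{ w \mid f(w)(\gamma_0) = 1\}$ is context-free for every fixed $\gamma_0 \in \Gamma$, which is the claim.

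The proof is therefore essentially a citation glued to Theorem~\ref{prop:LFFunion}, mirroring the proof of the deterministic theorem almost verbatim. The one point that deserves a sentence of care — and the main obstacle — is verifying that the hypotheses of Theorem~\ref{prop:LFFunion} genuinely hold for $T'$: concretely, that $T'$ is finitary (so that filtered colimits in $\Set^{T'}$ are created by $U$ and $H^{T'}$ is finitary) and that $H$ really does lift along $T'$ in the sense of \autoref{deflifting}. Both are established in~\cite[Section 6]{coalgchomsky}, so the argument goes through; I would write:

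\begin{proof}
    The non-deterministic stack monad $T'$ is a finitary monad on \Set and $H = B \times (-)^\Sigma$ lifts to $H^{T'}$ on $\Set^{T'}$, see~\cite[Section 6]{coalgchomsky}. Hence Theorem~\ref{prop:LFFunion} applies, and the carrier of $\LFF H^{T'}$ is $\bigcup x^{\sharp\dagger}\cdot\eta_X^{T'}[X]$, ranging over all $x: X \to HT'X$ with $X$ finite. By~\cite[Theorem 6.5]{coalgchomsky}, a language $L$ is context-free iff there are some such $x$ with determinization $x^\sharp: T'X \to HT'X$, a state $s \in X$ and $\gamma_0 \in \Gamma$ with $f = x^{\sharp\dagger}\cdot\eta_X^{T'}(s) \in B^{\Sigma^*}$ and $L = \{ w \in \Sigma^* \mid f(w)(\gamma_0) = 1\}$. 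The claim now follows from~\eqref{LFFunion}.
    \qed
\end{proof}
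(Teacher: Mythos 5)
Your proposal is correct and follows essentially the same route as the paper, which proves this theorem simply by combining \cite[Theorem 6.5]{coalgchomsky} with the union formula \eqref{LFFunion}, exactly as in the deterministic case. Your extra sentence verifying that $T'$ is a finitary monad and that $H$ lifts to $\Set^{T'}$ (so that Theorem~\ref{prop:LFFunion} applies) is a reasonable explicit check of hypotheses the paper leaves implicit, not a deviation in approach.
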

%\vspace{-4mm}

%\subsection{Context-Free Languages and Constructively $S$-Algebraic %Formal 
%Power Series}
%\label{sec:cfl}
\begin{notes}
\begin{itemize}
  \item[] Some ideas:
  \item weighted languages generalize ordinary languages and are
  construed as functions $\Sigma^* \to S$, with $S$ a semiring
  \item of particular interest: $S$-algebraic formal power series
  that generalize context-free languages
  \item in our terminology, they are generated by coalgebras $X \to
  S \times (S\langle X \rangle)^\Sigma$ where $X$ is finite.
  \item We show that the set of constructively $S$-algebraic
  languages arises precisely as the LFFP for the (standard) functor
  for deterministic automata, i.e. $X \to S \times X^\Sigma$, but
  in the category of Eilenberg-Moore algebras for the monad
  $S\langle \cdot \rangle$.
\end{itemize}
\end{notes}

\densesubsection{Context-Free Languages and Constructively $S$-Algebraic Power Series}
One generalizes from formal (resp.~context-free) languages to weighted formal
(resp.\ context-free) languages by assigning to each word a weight from a fixed
semiring. More formally, a weighted language -- a.k.a.~\emph{formal power series}
-- over an input alphabet $X$ is defined as a map $X^* \to S$, where $S$ is a
semiring. The set of all formal power series is denoted by $\FPS{X}$. Ordinary
formal languages are formal power series over the boolean semiring $\mathbb{B}=
\{0,1\}$, i.e.~maps $X^*\to \{0,1\}$.

An important class of formal power series is that of \emph{constructively
$S$-algebraic} formal power series. We show that this class arises precisely as
the LFF of the standard functor for deterministic Moore automata $H = S×(-)^\Sigma$, but
on an Eilenberg-Moore category of a \Set monad. As a special case, constructively
$\mathbb{B}$-algebraic series are the context-free weighted languages and are precisely
the LFF of the automata functor in the category of idempotent semirings.

The original definition of constructively $S$-algebraic formal power series goes back to
\citet{Fliess1974}, see also~\cite{handbookWeightedAutomata}.
Here, we use the equivalent coalgebraic characterization by \citet{jcssContextFree}.

Let $\Poly{X}\subseteq \FPS{X}$ the subset of those maps, that are $0$ for
all but finitely many $w\in X^*$. If $S$ is commutative, then $\Poly{-}$ yields a finitary
monad and thus also $T=\Poly{-+\Sigma}$ by~\autoref{exceptionmonad}. %\cite[Corollary 1]{combiningeffects}.
The algebras for $\Poly{-}$ are associative $S$-algebras (over the commutative
semiring $S$), i.e.~$S$-modules together with a monoid structure that is a module morphism in
both arguments. The algebras for $T$ are $\Sigma$-pointed $S$-algebras. %\cite{combiningeffects}. 
The following notions are special instances of $S$-algebras.
\vspace{-1mm}
\begin{example}
    For $S=\mathbb{B} = \{0,1\}$, one obtains idempotent semirings as
    $\mathbb{B}$-algebras, for $S=\mathbb{N}$ semirings, and for
    $S=\mathbb{Z}$ ordinary rings.
\end{example}
\vspace{-1mm}

\citet[Proposition 4]{jcssContextFree} show that the final $H$-coalgebra is carried by
$\FPS{\Sigma}$ and that constructively $S$-algebraic series are precisely those
elements of $\FPS{\Sigma}$ that arise as the behaviours of those coalgebra $c: X\to H\Poly{X}$ with
finite $X$, after determinizing them to some $c^\sharp: \Poly{X} \to H\Poly{X}$ (see~\cite[Theorem 23]{jcssContextFree}).

However, this determinization is not directly an instance of the generalized
powerset construction. We shall show that the same behaviours can be obtained by using the standard generalized powerset construction 
with an appropriate lifting of $H$ to $T$-algebras.
Having an $S$-algebra structure on $A$ and a $\Sigma$-pointing $j: \Sigma \to A$ we need to
define another $S$-algebra structure and $\Sigma$-pointing on $HA = S×A^\Sigma$. While the $S$-module structure is just
point-wise, we need to take care when multiplying two elements from $HA$. To this end we first we define the operation $\fuse{-,-}: S \times A^\Sigma \to A$ by
\vspace{-2mm}
\[
    \fuse{o,\delta} :=
        i(o) + \displaystyle\sum_{b\in \Sigma}\big(j(b)\cdot \delta(b)\big),
\vspace{-1mm}
\]
where $i: S \to A$ is the canonical map with $i(s) = s \cdot 1$ with $1$ the neutral element of the monoid on $A$. 
The idea is that $\fuse{o,\delta}$ acts like a state with output $o$ and
derivation $\delta$. The multiplication on $HA = S \times A^\Sigma$ is then defined by
\vspace{-1mm}
\begin{align}
    \label{Hmultiplication}
    (o_1,\delta_1)* (o_2,\delta_2) :=
    \big(o_1\cdot o_2, a\mapsto \delta_1(a)\cdot \fuse{o_2,\delta_2} +
    i(o_1)\cdot \delta_2(a) \big).
\end{align}
The $\Sigma$-pointing is the obvious: $a \mapsto (0, \varrho_a)$ where
$\varrho_a(a) = 1$ and $\varrho_a(b) = 0$ 
for $a \neq b$.
\begin{lemma}
    \label{bisimilarAlgebra}
    For any $w \in A$ in $\Set^T$ and any $H^T$-coalgebra structure
    $c: A\to H^TA$, $w$ and $\fuse{c(w)}$ are behaviourally equivalent in \Set.
\end{lemma}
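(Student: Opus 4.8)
The plan is to exhibit an $H$-bisimulation on the underlying $\Set$-coalgebra $(A,c)$, where $H = S\times(-)^\Sigma$ and $c\colon A\to H^TA$ is read as an $H$-coalgebra via $H\cdot U = U\cdot H^T$. Concretely I would take $R = \{\,(x,\fuse{c(x)})\mid x\in A\,\}\subseteq A\times A$ and equip it with an $H$-coalgebra structure making both projections $R\to A$ into $H$-coalgebra homomorphisms; since $(w,\fuse{c(w)})\in R$, the two projections then send it to the same element of $\nu H$, which is exactly behavioural equivalence of $w$ and $\fuse{c(w)}$ in $\Set$. The input to the construction is that $c$ is a morphism of $\Set^T$, hence a homomorphism of $T$-algebras, so it preserves the $S$-module structure, the monoid multiplication, the monoid unit, and the $\Sigma$-pointing. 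Unwinding the $T$-algebra structure of $H^TA = S\times A^\Sigma$ from~\eqref{Hmultiplication}, its module operations are pointwise, its monoid unit is $(1,0)$ (the $0$ being the zero of $A^\Sigma$), and its $\Sigma$-pointing is $b\mapsto(0,\varrho_b)$; preservation of these data gives $c(i(o)) = o\cdot c(1) = (o,0)$ for $o\in S$ (recall $i(o)=o\cdot 1$) and $c(j(b)) = (0,\varrho_b)$ for $b\in\Sigma$.

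The crux is to compute $c(\fuse{c(x)})$ for $x\in A$. Writing $c(x) = (o,\delta)$ we have $\fuse{c(x)} = i(o) + \sum_{b\in\Sigma} j(b)\cdot\delta(b)$, so applying the $T$-algebra homomorphism $c$ rewrites this as $(o,0) + \sum_{b\in\Sigma}\bigl((0,\varrho_b)*c(\delta(b))\bigr)$. For a fixed $b$ I would expand the product $(0,\varrho_b)*c(\delta(b))$ using~\eqref{Hmultiplication}: the first component of the left factor being $0$ makes the term $i(o_1)\cdot\delta_2(a)$ vanish (as $i(0)=0$), and the term $\delta_1(a)\cdot\fuse{o_2,\delta_2}$ becomes $\varrho_b(a)\cdot\fuse{c(\delta(b))}$, i.e.\ the map $\Sigma\to A$ supported at $b$ with value $\fuse{c(\delta(b))}$ --- here one uses that the bracket appearing inside~\eqref{Hmultiplication} is the one built from the algebra structure of $A$, so that its value at the pair $c(\delta(b))$ is literally $\fuse{c(\delta(b))}$. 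Summing over the finite alphabet $\Sigma$ then gives $c(\fuse{c(x)}) = \bigl(o,\ a\mapsto\fuse{c(\delta(a))}\bigr)$. I expect this one-line calculation to be the main obstacle: it is short, but one has to be careful about which copies of $i$ and of $\fuse{-,-}$ are intended and about which of the two summands of the product on $HA$ survives.

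Given this formula, checking that $R$ is a bisimulation is immediate. For $(x,\fuse{c(x)})\in R$ with $c(x) = (o,\delta)$ we have just shown $c(\fuse{c(x)}) = \bigl(o,\ a\mapsto\fuse{c(\delta(a))}\bigr)$, so the output components of the two coalgebra values agree, and for every $a\in\Sigma$ the corresponding pair of $a$-derivatives is $(\delta(a),\fuse{c(\delta(a))})$, which again lies in $R$. Hence $(x,\fuse{c(x)})\mapsto \bigl(o,\ a\mapsto(\delta(a),\fuse{c(\delta(a))})\bigr)$ is a well-defined $H$-coalgebra structure on $R$ for which both projections are $H$-coalgebra homomorphisms. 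Consequently $w$ and $\fuse{c(w)}$, being related by $R$, are sent to the same element of $\nu H$, i.e.\ they are behaviourally equivalent in $\Set$, which is the claim.
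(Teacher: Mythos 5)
Your proposal is correct and follows essentially the same route as the paper: the same relation $R=\{(x,\fuse{c(x)})\mid x\in A\}$, the same use of $c$ being a $T$-algebra homomorphism to get $c(i(o))=(o,a\mapsto 0_A)$ and $c(j(b))=(0,\varrho_b)$, the same key computation $c(j(b))*c(v)=(0,a\mapsto\varrho_b(a)\cdot\fuse{c(v)})$ leading to $c(\fuse{c(x)})=(o,a\mapsto\fuse{c(\delta(a))})$, and the same conclusion that $R$ is a bisimulation whose projections identify $w$ and $\fuse{c(w)}$ in $\nu H$. No gaps; your write-up is just slightly more explicit than the paper about the coalgebra structure on $R$ and the finality argument.
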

Given a coalgebra $c: X\to H\Poly{X}$, \citet[Proposition~14]{jcssContextFree} determinize
$c$ to some $\hat c = \fpair{\hat o,\hat\delta}: \Poly{X}\to H\Poly{X}$ with the
property that for any $v,w\in \Poly{X}$,
\vspace{-1mm}
\begin{align}
    \label{WEAproperty}
    \hat o(v*w) = \hat o(v) \cdot \hat o(w)
    \quad\text{and}\quad
    \hat\delta(v*w,a) = \hat\delta(v,a)*w+\hat o(v)*\hat\delta(w,a),
\end{align}
and such that $\hat c$ is a $S$-module homomorphism. However, the generalized powerset construction w.r.t.~$T$ yields a coalgebra $c^\sharp: \Poly{X+\Sigma} \to H\Poly{X + \Sigma}$. The above property, together with
\autoref{bisimilarAlgebra} and \eqref{Hmultiplication} implies that
$\hat c$ and $c^\sharp$ are essentially the same coalgebra structures:
\begin{lemma}
    \label{CFSameCoalgebra}
    In \Set, $u\in (\Poly{X},\hat c)$ and $\Poly{\inl}(u) \in (\Poly{X+\Sigma}, c^\sharp)$ are behaviourally equivalent.
\end{lemma}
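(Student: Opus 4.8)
The plan is to show that the \Set-map $c^{\sharp\dagger}\cdot\Poly{\inl}\colon\Poly{X}\to\nu H^T$, obtained by determinizing and then taking coalgebraic language semantics, is an $H$-coalgebra homomorphism from $(\Poly{X},\hat c)$ into the final $H$-coalgebra $(\nu H,\tau)$ (recall $\nu H^T$ is carried by $\nu H$). Since the final coalgebra admits only one homomorphism out of $(\Poly{X},\hat c)$, namely $\hat c^\dagger$, this forces $c^{\sharp\dagger}\cdot\Poly{\inl}=\hat c^\dagger$, i.e.\ $c^{\sharp\dagger}(\Poly{\inl}(u))=\hat c^\dagger(u)$ for every $u\in\Poly{X}$, which is exactly behavioural equivalence of $u$ and $\Poly{\inl}(u)$ in \Set. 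As $c^{\sharp\dagger}$ is itself a coalgebra homomorphism, $\tau\cdot c^{\sharp\dagger}\cdot\Poly{\inl}=Hc^{\sharp\dagger}\cdot c^\sharp\cdot\Poly{\inl}$, so the homomorphism condition for $c^{\sharp\dagger}\cdot\Poly{\inl}$ boils down to the identity
\[
  Hc^{\sharp\dagger}\cdot c^\sharp\cdot\Poly{\inl}
  \;=\;
  Hc^{\sharp\dagger}\cdot H\Poly{\inl}\cdot\hat c
  \qquad(\text{as maps }\Poly{X}\to H(\nu H^T)).
\]
Both sides are $S$-module homomorphisms in the argument, so I would verify this identity on the basis $X^{*}$ of the free $S$-module $\Poly{X}$, by induction on the length of the word.

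For the induction three facts are used. First, by the generalized powerset construction $c^\sharp\cdot\eta_X=H\Poly{\inl}\cdot c$ and $c^\sharp$ is a $T$-algebra morphism; since $T$-algebras are $\Sigma$-pointed $S$-algebras whose $S$-algebra multiplication on $H^{T}A$ is \eqref{Hmultiplication}, the map $c^\sharp$ is $S$-linear and multiplicative with respect to $*$. Secondly, $c^{\sharp\dagger}$ is a morphism in $\Set^{T}$ (the final $H^{T}$-coalgebra lives in $\Set^{T}$), hence an $S$-algebra homomorphism $\Poly{X+\Sigma}\to\nu H^T$, so it may be pushed past sums, scalars and products. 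Thirdly, \autoref{bisimilarAlgebra} gives $c^{\sharp\dagger}(v)=c^{\sharp\dagger}(\fuse{c^\sharp(v)})$ for every $v\in\Poly{X+\Sigma}$. The base cases are immediate: for $w=x\in X$ we have $\Poly{\inl}(x)=\eta_X(x)$ and $\hat c(x)=c(x)$, so that $c^\sharp(\Poly{\inl}(x))=H\Poly{\inl}(\hat c(x))$ already on the nose, and the empty word is handled by the fact that both $\hat c$ and $c^\sharp$ send the algebra unit to the algebra unit. For the step $w=x\cdot w'$, expand $c^\sharp(\Poly{\inl}(w))=c^\sharp(\inl(x))*c^\sharp(\Poly{\inl}(w'))$ using \eqref{Hmultiplication} and $\hat c(x\cdot w')$ using \eqref{WEAproperty}. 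The two resulting formulas for the output component coincide, since both \eqref{Hmultiplication} and \eqref{WEAproperty} multiply outputs; the formulas for the $a$-derivative coincide except that the $c^\sharp$-side carries a factor $\fuse{c^\sharp(\Poly{\inl}(w'))}$ where the $\hat c$-side carries $\Poly{\inl}(w')$, and the tails $c^\sharp(\Poly{\inl}(w'))$ resp.\ $H\Poly{\inl}(\hat c(w'))$ appear inside. Applying $c^{\sharp\dagger}$ and using its $S$-algebra-homomorphism property to distribute over the sum, the scalar $\hat o(x)$ and the product, the induction hypothesis identifies the tails and \autoref{bisimilarAlgebra} absorbs the $\fuse{c^\sharp(\Poly{\inl}(w'))}$ versus $\Poly{\inl}(w')$ discrepancy, so both sides become equal.

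The main obstacle is the bookkeeping in the inductive step: one must pin down exactly where the two determinizations disagree and check that the discrepant subterm occurs as a genuine factor (resp.\ summand), so that the algebra-homomorphism property of $c^{\sharp\dagger}$ can be invoked to replace it, via \autoref{bisimilarAlgebra}, by the matching subterm of the $\hat c$-side. Conceptually the point is that $\Poly{\inl}$ fails to be a strict $H$-coalgebra homomorphism $(\Poly{X},\hat c)\to(\Poly{X+\Sigma},c^\sharp)$ only because one determinization unfolds a state $v$ into $\fuse{c^\sharp(v)}$ where the other keeps $v$, and this single unfolding step is invisible to the behaviour map. A secondary thing to get right is that $\nu H^T$ genuinely carries a $\Sigma$-pointed $S$-algebra and that $c^{\sharp\dagger}$ respects it, which is what legitimizes commuting $c^{\sharp\dagger}$ past the operations of \eqref{Hmultiplication}.
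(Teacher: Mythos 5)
Your argument is correct, and it uses exactly the same three ingredients as the paper's proof -- the equations \eqref{WEAproperty} and \eqref{Hmultiplication}, \autoref{bisimilarAlgebra}, and the fact that $c^{\sharp\dagger}$ (living in $\Set^T$) and $\hat c^\dagger$ are multiplicative -- but packages them differently. The paper proves behavioural equivalence of $u$ and $\Poly{\inl}(u)$ by structural induction over the $S$-algebra connectives of $\Poly{X}$, handling the module operations pointwise and the product case by first showing that behavioural equivalence is a congruence for $*$. You instead prove the single identity $Hc^{\sharp\dagger}\cdot c^\sharp\cdot\Poly{\inl}=Hc^{\sharp\dagger}\cdot H\Poly{\inl}\cdot\hat c$, conclude by finality that $c^{\sharp\dagger}\cdot\Poly{\inl}=\hat c^\dagger$, and discharge the identity on the monomial basis $X^*$ by induction on word length, using that both sides are $S$-module homomorphisms out of the free module $\Poly{X}$. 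This buys you something: the module cases disappear into the linearity reduction, and you never need to argue that $\sim$ is a congruence, since you work with honest equalities in $\nu H^T$ after applying $c^{\sharp\dagger}$. The price is that your induction hypothesis must be taken in both components (equality of outputs, and equality of $c^{\sharp\dagger}$-images of all $a$-derivatives), and the base case must be the on-the-nose identity $c^\sharp(\inl(x))=H\Poly{\inl}(\hat c(x))$, which indeed follows from $\hat c$ extending $c$ and $c^\sharp\cdot\eta^T_X=H\Poly{\inl}\cdot c$; with these in place the step computation you sketch does close, the only asymmetry being the factor $\fuse{c^\sharp(\Poly{\inl}(w'))}$ versus $\Poly{\inl}(w')$, which \autoref{bisimilarAlgebra} removes after $c^{\sharp\dagger}$ is distributed over the sum, scalar and product -- exactly the role it plays in the paper's product case.
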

It follows that $\hat c^\dagger = c^{\sharp\dagger}\cdot \Poly{\inl}$ and thus their
images in $\nu H$ are identical. Hence, a formal power series is constructively
$S$-algebraic iff it is in the image of some $c^{\sharp\dagger}\cdot
\Poly{\inl}$, and by \eqref{LFFunion}, iff it is in the locally finite fixpoint
of $H^T$.

%\newpage
%\subsection{Courcelle's Algebraic Trees}
%\label{sec:alg}
\begin{notes}
\begin{itemize}
  \item What are Courcelle's algebraic trees?
  \item Why are they important?
  \item They arise as the locally finite fixpoint.
\end{itemize}
\end{notes}
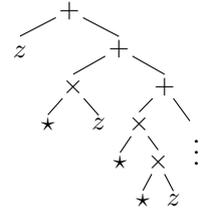
\begin{wrapfigure}{r}{3cm} % alignment, width
    \centering
    % we really need these vspaces, see
    % https://en.wikibooks.org/wiki/LaTeX/Floats,_Figures_and_Captions#Tip_for_figures_with_too_much_white_space
    %\vspace{-1cm}
    %\vspace{-10pt}
    \begin{tikzpicture}[lambdatree,mathnodes, level distance=5mm]
    \node (z) {+ }
    child { node {\mathllap{\phantom{X}}z} }
    child { node[yshift=0mm] {+}
            child { node[xshift=-1mm] {×}
                child { node {\star} }
                child { node {z} } }
            child { node[xshift=1mm] {+}
                child { node {×}
                    child { node {\star} }
                    child { node {×}
                        child { node {\star} }
                        child { node {z} } } }
                child { node (ddots) {\ } }
            }
            }
    ;
            \node[anchor=north west] at (ddots.north) {\vdots};
\end{tikzpicture}
    \vspace{-3mm}
    \caption{Solution of \newline\hspace*{\fill} $\varphi(z) = z + \varphi(\star × z)$}
    \label{rpsexample}
    \vspace{-22pt}
\end{wrapfigure}

\densesubsection{Courcelle's Algebraic Trees}
For a fixed signature $\Sigma$ of so called \emph{givens}, a \emph{recursive
program scheme} (or \emph{rps}, for short) contains mutually recursive definitions of
new operations $\varphi_1,\ldots,\varphi_k$ (with respective arities
$n_1,\ldots,n_k$). The recursive definition of $\varphi_i$ may involve
symbols from $\Sigma$, operations $\varphi_1,\ldots,\varphi_k$ and
$n_i$ variables $x_1,\ldots,x_{n_i}$. The (uninterpreted) solution of an rps is
obtained by unravelling these recursive definitions, producing a possibly
infinite $\Sigma$-tree over $x_1,\ldots,x_{n_i}$ for each operation $\varphi_i$.
\autoref{rpsexample} shows an rps over the signature $\Sigma
=\{\nicefrac{\star}{0}, \nicefrac{×}{2},
\nicefrac{+}{2}\}$ and its solution. In general, an \emph{algebraic $\Sigma$-tree} is a $\Sigma$-tree which is definable by an
rps over $\Sigma$ (see Courcelle~\cite{courcelle}). Generalizing from a signature to a finitary endofunctor
$H:\C\to \C$ on an lfp category, \citet{secondordermonad} describe
an rps as a coalgebra for a functor $\Hf$ on $H/\Mndf(\C)$, in which objects are finitary $H$-pointed monads on $\C$, 
i.e.~finitary monads $M$ together with a natural transformation $H\to M$. They introduce the \emph{context-free} monad $C^H$ of $H$, which is  an $H$-pointed monad that is a subcoalgebra of the final coalgebra for $\Hf$ and which is the monad of Courcelle's algebraic $\Sigma$-trees in the special case where $\C = \Set$ and $H$ is a polynomial functor associated to a signature $\Sigma$. We will prove that this monad is the LFF of $\Hf$, and thereby we characterize it by a universal property -- solving the open problem in~\cite{secondordermonad}. 

The setting is again an instance of the generalized powerset construction, but this time with $\Funf(\C)$ as the base category in lieu of $\Set$. Let $\C$ be an lfp category in
which the coproduct injections are monic and consider a finitary,
mono-preserving endofunctor $H: \C\to \C$. Denote by $\Funf(\C)$ the category of
finitary endofunctors on $\C$. Then $H$ induces an endofunctor $H\cdot(-)+\Id$
on $\Funf(\C)$, denoted $\dot H$ and mapping an endofunctor $V$ to the functor
$X\mapsto HVX+X$.
This functor $\dot H$ gets precomposed with a monad on $\Funf(\C)$ as we now explain.  
%% CONTINUE HERE
%\vspace{-1mm}
\begin{proposition}[Free monad, \cite{freealgebras,freetriples}]\label{prop:mon}
    For a finitary endofunctor $H$, free $H$-algebras $\varphi_X: HF^HX\to F^HX$
    exist for all $X\in \C$. $F^H$ itself is a finitary monad on
    $\C$, more specifically it is the \emph{free monad} on $H$.
\end{proposition}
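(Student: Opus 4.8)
The plan is the classical one: realise the free $H$-algebra as an initial algebra for a modified functor, package the resulting adjunction as the monad $F^H$, and read finitariness off an explicit colimit formula.

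First I would fix $X\in\C$ and consider the endofunctor $H_X := H(-)+X$ on $\C$. Since $\C$ is cocomplete (being lfp) and $H$ is finitary, $H_X$ is finitary, so its initial algebra exists and is computed by the free-algebra chain: with $W_0 := \Id$ and $W_{n+1} := H\o W_n + \Id$ (so that $W_n X$ collects the terms of depth at most $n$ over $X$), one has $F^H X := \colim_{n<\omega} W_n X$, and finitariness of $H_X$ turns its structure morphism into an isomorphism $HF^H X + X \xrightarrow{\cong} F^H X$. I would name its two coproduct components $\varphi_X : HF^H X \to F^H X$ and $\eta_X : X \to F^H X$. Then a routine check shows that $(F^H X,\varphi_X)$ with unit $\eta_X$ is the free $H$-algebra on $X$: given an $H$-algebra $(A,a)$ and $f : X\to A$, the morphism $[a,f] : HA+X\to A$ is an $H_X$-algebra structure, initiality gives a unique $H_X$-homomorphism $F^H X\to A$, and the $H_X$-homomorphisms out of $F^H X$ are exactly the $H$-algebra homomorphisms that extend a map on $X$; this yields the unique extension of $f$.

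By this universal property, $X\mapsto F^H X$ is the object part of a left adjoint $F$ to the forgetful functor $U:\mathsf{Alg}(H)\to\C$, and $F^H := U\o F$ is the induced monad, with unit $\eta$ as above and multiplication $\mu_X:F^HF^HX\to F^HX$ the unique $H$-algebra homomorphism extending $\id_{F^HX}$. That $F^H$ is the \emph{free} monad on $H$ is the standard fact that the categories of $H$-algebras and of Eilenberg--Moore algebras of $F^H$ are isomorphic over $\C$, the universal natural transformation $H\Rightarrow F^H$ being $\varphi\o H\eta$; for the monad laws and this correspondence I would simply cite \cite{freealgebras,freetriples}.

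Finally, $F^H$ is finitary. By induction each $W_n$ is finitary: $W_0=\Id$ is, and if $W_n$ is then so is $W_{n+1}=H\o W_n+\Id$, being a composite and a coproduct of finitary functors; hence $F^H$, a pointwise $\omega$-colimit of the $W_n$, is finitary because colimits commute with colimits, i.e.\ $F^H(\colim_i X_i)\cong\colim_n\colim_i W_n X_i\cong\colim_i\colim_n W_n X_i\cong\colim_i F^H X_i$ for every filtered diagram $(X_i)$. The only step that is not pure bookkeeping is the convergence of the chain at stage $\omega$, i.e.\ that $HF^HX+X\to F^HX$ is invertible; that is precisely where finitariness of $H$ (hence of $H_X$) is used, and it is the part I would either treat with care or delegate to the cited sources.
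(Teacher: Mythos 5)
Your proposal is correct and is exactly the classical argument (Ad\'amek's free-algebra chain for $H(-)+X$ converging at $\omega$ by finitarity, the induced adjunction giving the monad, and Barr's identification of it as the free monad) that the paper itself does not reprove but delegates to the cited sources \cite{freealgebras,freetriples}. The finitariness argument via interchange of the $\omega$-colimit with filtered colimits is also the standard one, so nothing further is needed.
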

%\vspace{-2mm}
For example, if $H$ is a polynomial functor associated to a signature $\Sigma$, then $F^{H}X$ is the usual term algebra that contains all finite $\Sigma$-trees over the set of generators $X$. Proposition~\ref{prop:mon} implies that $H \mapsto F^H$ is the object assignment of a monad on $\Funf(\C)$. The Eilenberg-Moore category of $F^{(-)}$ is easily seen to be $\Mndf(\C)$, the category of finitary
monads on $\C$. %, together with monad morphisms.
Here, fp and fg objects differ, see \cite[Section~5.4.1]{wissmann2015} for a proof.

Similarly as in the case of context-free languages, we will work with the monad
$E^{(-)} = F^{H+(-)}$, so we get $H$-pointed finitary monads as the
$E^{(-)}$-algebras. This category is equivalent to a slice category: the universal
property induced by $F^{(-)}$ states, that for any finitary monad $B$ the
natural transformations $H\to B$ are in one-to-one correspondence
with monad
morphisms $F^H \to B$; so the category $H/\Mndf(\C)$ of finitary $H$-pointed monads on $\C$ 
is isomorphic to the slice category $F^H/\Mndf(\C)$.  This finishes the description of the base
category and we now lift the functor $\dot H$ to this category. 

Consider an $H$-pointed monad $(B,\beta: H\to \C) \in H/\Mndf(\C)$. By
\cite{monadsofcoalgebras}, the endofunctor $H\cdot B+\Id$ carries a canonical
monad structure. Furthermore, we have an obvious pointing $\inl\cdot H\eta^B:
H\to H\cdot B +\Id$. By \cite{mmcatsolrps}, this defines an endofunctor on
$H$-pointed monads, 
\(
    \Hf: H/\Mndf(\C)\to H/\Mndf(\C),
\)
which is a lifting of $\dot H$. In order to verify that $\Hf$ is finitary, we
first need to know how filtered colimits look in $H/\Mndf(\C)$.

\vspace{-2mm}
\begin{lemma}\label{mndfilteredcolimits}
    The forgetful $U: \Mndf(\C) \to \Funf(\C)$ creates filtered colimits.
\end{lemma}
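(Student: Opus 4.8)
The plan is to exploit the fact that $\Mndf(\C)$ is the Eilenberg--Moore category of the monad $F^{(-)}$ on $\Funf(\C)$ (as noted just before Proposition~\ref{prop:mon}), together with the general principle that for a \emph{finitary} monad the forgetful functor from its Eilenberg--Moore category creates those colimits that the monad preserves. Concretely, I would first argue that the monad $F^{(-)}$ on $\Funf(\C)$ is finitary, i.e.\ preserves filtered colimits. This is the crux: given a filtered diagram $(V_i)_{i\in\D}$ of finitary endofunctors with colimit $V = \colim_i V_i$ computed pointwise in $\Funf(\C)$, I must show $F^{V} \cong \colim_i F^{V_i}$. Since free algebras $F^{V}X$ are built as the initial algebra of $Y \mapsto VY + X$ and this construction is a (transfinite, but by finitarity $\omega$-indexed) colimit that commutes with the filtered colimit over $i$ — both colimits being filtered, hence interchangeable, and $\C$ being cocomplete — one obtains $F^{V}X \cong \colim_i F^{V_i}X$ pointwise, naturally in $X$. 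The only subtlety is checking that the colimit $\colim_i F^{V_i}$, formed in $\Funf(\C)$, is again a \emph{finitary} endofunctor and that the induced monad structure agrees; the former holds because a filtered colimit of finitary functors is finitary, and the latter is routine since the monad multiplication and unit are defined by the universal property of the free-algebra construction, which is preserved.

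Once $F^{(-)}$ is known to be finitary, I would invoke the standard result (e.g.\ as used for $\C^T$ in the preliminaries: ``filtered colimits in $\C^T$ are created by the forgetful functor to $\C$ when $T$ is finitary'') applied with base category $\Funf(\C)$ in place of $\C$ and monad $F^{(-)}$ in place of $T$. This immediately yields that $U\colon \Mndf(\C) \to \Funf(\C)$ creates filtered colimits: given a filtered diagram of finitary monads, form the colimit of the underlying endofunctors in $\Funf(\C)$, and the monad structure lifts uniquely, with the cocone being colimiting in $\Mndf(\C)$.

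I expect the main obstacle to be the verification that $F^{(-)}$ preserves filtered colimits, specifically interchanging the ``vertical'' $\omega$-chain computing each free algebra $F^{V_i}X = \colim_{n<\omega} V_i^{(n)}X$ (where $V_i^{(0)}X = X$, $V_i^{(n+1)}X = V_iV_i^{(n)}X + X$) with the ``horizontal'' filtered colimit over $\D$. This interchange is legitimate because both are filtered colimits in the cocomplete category $\C$ and filtered colimits commute with filtered colimits, but one must be a little careful that each $V_i$ being finitary is exactly what makes the $\omega$-chain converge to the free algebra, and that the comparison morphism $\colim_i F^{V_i}X \to F^{\colim_i V_i}X$ is the obvious one and is an iso by this double-colimit argument. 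Everything else — naturality in $X$, compatibility with the monad structure, and the final ``creates'' conclusion — is then a formal consequence.
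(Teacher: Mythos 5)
Your proposal is correct, but it takes a genuinely different route from the paper. The paper proves the lemma by hand: it forms the colimit $M$ of the underlying functors in $\Funf(\C)$, defines $\eta$ and $\mu$ on $M$ via the colimit injections (using filteredness for independence of the chosen index, and the fact that $(M_iM_i)_{i}$ is a filtered diagram with colimit $MM$, which is where finitarity of the $M_i$ enters), verifies the monad laws and the colimit property by exploiting that the injections $\inj_i$ and $\inj_i*\inj_i$ are jointly epic, and finally checks that the mediating natural transformation into any competing cocone of monad morphisms is itself a monad morphism. You instead reduce the statement to two ingredients: the identification of $\Mndf(\C)$ with the Eilenberg--Moore category of the free-monad monad $F^{(-)}$ on $\Funf(\C)$ (which the paper asserts just before the lemma), and the finitarity of $F^{(-)}$, established via the free-algebra chain $W_0=X$, $W_{n+1}=VW_n+X$ together with the interchange of the $\omega$-colimit with the filtered colimit over the diagram (plus the diagonal/cofinality step giving $\colim_i V_iW^i_n\cong VW_n$, which uses that each $V_i$ is finitary and that colimits in $\Funf(\C)$ are pointwise); the standard theorem that a forgetful functor from an Eilenberg--Moore category creates the colimits preserved by the monad then finishes the job. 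Your approach buys conceptual economy and a reusable by-product (finitarity of the free-monad monad), but it leans on the monadicity of $\Mndf(\C)$ over $\Funf(\C)$, which the paper only states as ``easily seen,'' whereas the paper's computation is self-contained; note also that the essential technical content is shared, since the paper's step ``$\colim_i M_iM_i = MM$'' is the same finitarity-plus-diagonal argument that underlies your colimit interchange.
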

\vspace{-2mm}
Clearly, the canonical projection functor $H/\Mndf(\C) \to \Mndf(\C)$ creates filtered colimits, too. Therefore, filtered colimits in the slice category $H/\Mndf(\C)$ are formed on the level of $\Funf(\C)$, i.e.~object-wise. The functor $\dot H$ is
finitary on $\Funf(\C)$ and thus also its lifting $\Hf$ is finitary.
So all requirements from \autoref{basicassumption} are met: we have a
finitary endofunctor $\Hf$ on the lfp category $H/\Mndf(\C)$, and by
\cite[Corollary 2.20]{secondordermonad} $\Hf$ preserves monos since $H$ does. By \autoref{thm:final}, $\Hf$ has a locally finite fixpoint.
\vspace{-2mm}

\begin{remark}\label{rem:T}
The final $\Hf$-coalgebra is not of much interest, but that of a related 
functor. $\Hf$ generalizes to a functor \( \H: H/\Mndc(\C) \to H/\Mndc(\C) \) on $H$-pointed countably
accessible\footnote{A colimit is \emph{countably filtered} if its diagram has for every
countable subcategory a cocone. A functor is \emph{countably accessible} if
it preserves countably filtered colimits.} monads. For any
object $X \in \C$, the finitary endofunctor $H(-) + X$ has a final coalgebra;
call the carrier $TX$. Then $T$ is a monad \cite{aamvcia}, is countably
accessible~\cite{secondordermonad} and is the final $\H$-coalgebra
\cite{mmcatsolrps}.
\end{remark}
\vspace{-2mm}
\noindent
\citet{secondordermonad} characterize a (guarded) recursive program scheme as 
a natural transformation $V \to H \cdot E^{V} + \Id$ with $V$ fp (in
$\Funf(\C)$), or equivalently, via the generalized powerset construction
w.r.t.~the monad $E^{(-)}$ as an $\Hf$-coalgebra on the carrier $E^V$ (in
$\Mndf(\C)$). These $\Hf$-coalgebras on carriers $E^{V}$ where $V\in \Funf(\C)$
is fp form the full subcategory $\EQ \subseteq \Coalg \Hf$.
They show two equivalent ways of constructing the monad of
Courcelle's algebraic trees for the case $\C=\Set$:
as the image of $\colim \EQ$ in the final coalgebra $T$ of~\autoref{rem:T}, and 
as the colimit of $\EQ[2]$, where $\EQ[2]$ is the closure of $\EQ$
under strong quotients.
%
%\begin{enumerate}
%\item as the image of $\colim \EQ$ in $T$.
%\item as the colimit of $\EQ[2]$, where $\EQ[2]$ is the closure of $\EQ$
%under coequalizers and strong quotients.
%\end{enumerate}
We now provide a third characterization, and show that the monad of
Courcelle's algebraic trees is the locally finite fixpoint of $\Hf$.
%\begin{enumerate}[resume]
%\item as the locally finite fixpoint of $\Hf$.
%\end{enumerate}

To this end it suffices to show that $\EQ[2]$ is precisely the diagram of
$\Hf$-coalgebras with an fg carrier. This is established with the help of the following two technical lemmas. We now assume that $\C = \Set$. 
\vspace{-2mm}
\begin{lemma}
    \label{quasiEpiPreservation}
    $\Hf$ maps strong epis to morphisms carried by strong epi natural
    transformations.
\end{lemma}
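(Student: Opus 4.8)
The plan is to reduce the statement to the claim that every strong epimorphism $q\colon (B,\beta)\twoheadrightarrow(B',\beta')$ in $H/\Mndf(\Set)$ is already carried by a \emph{pointwise surjective} natural transformation. Granting this, recall that $\Hf$ is a lifting of $\dot H=H\cdot(-)+\Id$, so the natural transformation underlying $\Hf(q)$ has components $HBX+X\xrightarrow{\,H(q_X)+\id_X\,}HB'X+X$. Over $\Set$ every endofunctor preserves surjections (surjections split), and coproducts of surjections are surjective; hence if $q$ is pointwise surjective then so is the underlying natural transformation of $\Hf(q)$, and a pointwise surjective natural transformation between finitary endofunctors is a strong epimorphism in $\Funf(\Set)\simeq[\Set_{\mathsf f},\Set]$ (a presheaf category, where epis are computed pointwise and every epi is strong). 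That is exactly the required conclusion.

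It remains to analyse strong epis in $H/\Mndf(\Set)$. First I would take the pointwise $(\text{surjection},\text{injection})$-factorization of the underlying natural transformation of $q$, say $B\xrightarrow{\,e\,}\Im(q)\xrightarrow{\,m\,}B'$, where $\Im(q)X=q_X[BX]$. The functor $\Im(q)$ is finitary, being a pointwise quotient of $B$, and it carries a unique submonad structure of $B'$ making $e$ and $m$ monad morphisms: the unit of $B'$ visibly factors through the image, and for the multiplication one checks, using that $B$ preserves surjections together with the naturality of $q$ and the monad-morphism laws, that $\mu^{B'}$ restricts to $\Im(q)\cdot\Im(q)\to\Im(q)$. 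Moreover $\beta'=q\cdot\beta$ factors through $m$, so $\Im(q)$ inherits an $H$-pointing and both $e$ and $m$ become morphisms of $H$-pointed monads. Finally $m$ is pointwise injective, hence a monomorphism in $\Funf(\Set)$, and since the forgetful functors $H/\Mndf(\Set)\to\Mndf(\Set)\to\Funf(\Set)$ are faithful, they reflect monomorphisms, so $m$ is a monomorphism in $H/\Mndf(\Set)$.

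With the factorization $q=m\cdot e$ in hand I would apply the diagonal fill-in property to the commutative square with sides $q$ (a strong epi), $\id_{B'}$, $e$, and $m$ (a mono), i.e.\ $m\cdot e=\id_{B'}\cdot q$: this yields $d\colon B'\to\Im(q)$ with $m\cdot d=\id_{B'}$, so $m$ is a split monomorphism and therefore an isomorphism. Hence $q\cong e$ is pointwise surjective, and the first paragraph completes the proof.

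The only step with real content is the verification that the pointwise image of a monad morphism carries a submonad structure — equivalently, that the pointwise factorization system of $\Set$ lifts (first to $\Funf(\Set)$ and then, since $H\mapsto F^H$ preserves pointwise surjections, along $F^{(-)}$ to $\Mndf(\Set)=\Funf(\Set)^{F^{(-)}}$). This is the point where being over $\Set$ is used: surjections split, so $H$ and hence $F^{(-)}$ preserve them. It is the monad-level analogue of the closure of finitely generated objects under quotients, and once it is available the remainder is the standard manoeuvre that a strong epi admitting a mono factorization is an isomorphism on the mono part.
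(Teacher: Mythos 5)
Your proof is correct and follows essentially the same route as the paper: factor the carrier of the strong epi $q$ pointwise into a surjection followed by an injection, lift this factorization to (pointed) monads, use extremality (the diagonal fill-in) to conclude the mono part is an isomorphism so that $q$ has surjective components, and then observe that $\Set$-functors preserve surjections, making $Hq+\Id$ a componentwise surjective, hence strong epi, natural transformation. The only difference is presentational: you verify in detail that the image carries a submonad structure and $H$-pointing (and work directly in $H/\Mndf(\Set)$), whereas the paper simply asserts that the factorization lifts to $\Mndf(\Set)$ after first reducing from the slice category.
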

\vspace{-1mm}
We have the following variation of \autoref{lfpquotient}:
\vspace{-1mm}
%\vspace{-1mm}
\begin{lemma}
    \label{EQquotient}
    Any $\Hf$-coalgebra $b: (B,\beta) \to \Hf(B,\beta)$, with $B$ fg, is the
    strong quotient of a coalgebra from $\EQ$.
\end{lemma}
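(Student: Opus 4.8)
The plan is to reprise the proof of \autoref{lfpquotient}, but carried out one level up, inside $\Funf(\Set)$, where the role of the strong epi projective objects is played by the finite-signature polynomial functors $H_\Sigma$ (which are moreover fp, as recalled in the example following \autoref{projectiveassumption}). Concretely: first cover the carrier $B$ of the given $\Hf$-coalgebra by a free algebra $E^{H_\Sigma} = F^{H+H_\Sigma}$ via a strong-epi $q: E^{H_\Sigma} \twoheadrightarrow B$ in $H/\Mndf(\Set)$; then lift the coalgebra structure $\beta$ along $q$ to a structure $\bar\beta: E^{H_\Sigma} \to \Hf E^{H_\Sigma}$ making $q$ a coalgebra homomorphism; finally observe that $(E^{H_\Sigma},\bar\beta)$ lies in $\EQ$ because $H_\Sigma$ is fp, so $(B,\beta)$ is a strong quotient of a coalgebra from $\EQ$.

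For the cover, recall that $H/\Mndf(\Set)$ is (isomorphic to $F^H/\Mndf(\Set)$, hence) the Eilenberg--Moore category of the finitary monad $E^{(-)} = F^{H+(-)}$ on the lfp category $\Funf(\Set)$, and that $E^V$ is fp whenever $V$ is fp in $\Funf(\Set)$ by \cite[Corollary~2.75]{adamek1994locally}. Since $B$ is fg in the lfp category $H/\Mndf(\Set)$, it is a strong quotient of some fp object $P$; being an Eilenberg--Moore algebra for $E^{(-)}$ presented by finitely many operations and equations, $P$ is in turn a strong quotient of a free algebra $E^W$ with $W$ fp in $\Funf(\Set)$. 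Finally $W$ is a strong quotient $H_\Sigma \twoheadrightarrow W$ of a finite-signature polynomial functor, and since $E^{(-)}$ is a left adjoint it preserves this (regular $=$ strong) epi; composing the three strong epis gives $q: E^{H_\Sigma} \twoheadrightarrow B$.

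For the lift, write $U: H/\Mndf(\Set) \to \Funf(\Set)$ for the forgetful functor. By the universal property of the free monad, for every $H$-pointed finitary monad $M$ there is a bijection $H/\Mndf(\Set)(E^{H_\Sigma}, M) \cong \Funf(\Set)(H_\Sigma, UM)$, natural in $M$ (an $H$-pointing-preserving monad morphism out of $F^{H+H_\Sigma}$ is precisely a natural transformation $H_\Sigma \to UM$). By \autoref{quasiEpiPreservation}, $\Hf q$ is carried by a strong-epi natural transformation $U\Hf q: U\Hf E^{H_\Sigma} \twoheadrightarrow U\Hf B$ in $\Funf(\Set)$. Transport $\beta \cdot q: E^{H_\Sigma} \to \Hf B$ across the bijection to a natural transformation $g: H_\Sigma \to U\Hf B$; since $H_\Sigma$ is strong epi projective in $\Funf(\Set)$, it factors as $g = U\Hf q \cdot \bar g$ for some $\bar g: H_\Sigma \to U\Hf E^{H_\Sigma}$. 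Let $\bar\beta: E^{H_\Sigma} \to \Hf E^{H_\Sigma}$ correspond to $\bar g$; naturality of the bijection gives $\Hf q \cdot \bar\beta = \beta \cdot q$, i.e.\ $q$ is an $\Hf$-coalgebra homomorphism $(E^{H_\Sigma},\bar\beta) \to (B,\beta)$. As $H_\Sigma$ is fp we have $(E^{H_\Sigma},\bar\beta) \in \EQ$, and $q$ is strong-epi-carried, which proves the lemma.

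The main obstacle is the lifting step: the projectivity that drives it lives in $\Funf(\Set)$, not in $\Coalg \Hf$, so one must translate the coalgebra-level lifting problem ``lift $\beta \cdot q$ along $\Hf q$'' into an ordinary lifting problem for natural transformations out of the polynomial functor $H_\Sigma$. This hinges on \autoref{quasiEpiPreservation} --- so that $\Hf q$ really is object-wise surjective on carriers, which is \emph{not} automatic for monad morphisms --- together with the free-monad adjunction identifying $H$-pointed monad morphisms out of $E^{H_\Sigma}$ with natural transformations out of $H_\Sigma$, which then lets projectivity of $H_\Sigma$ do the work. A subsidiary point to check carefully is that $E^{(-)}$ preserves the strong epi $H_\Sigma \twoheadrightarrow W$, which follows since left adjoints preserve colimits and strong epis coincide with regular epis in lfp categories.
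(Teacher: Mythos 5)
Your proposal is correct and takes essentially the same route as the paper's appendix proof: cover $(B,\beta)$ by the free $H$-pointed monad $E^{H_\Sigma}$ on a (finite-signature) polynomial functor via a composite of strong epis, then combine projectivity of $H_\Sigma$ with \autoref{quasiEpiPreservation} and the adjunction $H/\Mndf(\Set)\big(E^{H_\Sigma},M\big)\cong \Funf(\Set)\big(H_\Sigma,UM\big)$ to lift the coalgebra structure along the quotient, so that the quotient map becomes a coalgebra homomorphism from an object of \EQ. The only blemishes are minor: you occasionally write $\beta$ for the coalgebra structure $b$, and your side justification that strong epis coincide with regular epis \emph{in all lfp categories} is false in general (e.g.\ in the lfp category of small categories), though it does hold in $\Funf(\Set)$, where strong epis are componentwise surjective, so the step you need still goes through.
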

The proof of~\autoref{EQquotient} makes use of Lemma~\ref{quasiEpiPreservation}
as well as the following properties:
\vspace{-2mm}
\begin{itemize}
\item The fp objects in $\Funf(\Set)$ are the quotients of polynomial functors.
\item The polynomial functors are projective. That means that for a polynomial functor $P$ and any natural transformation $n: K \to L$ with surjective components we have the following property:  for every $f: P \to L$ there exists $f': P \to K$ with $n \cdot f' = f$. 
\item Any fg object in $H/\Mndf(\Set)$ is the quotient of some $E^V$ with $V$ fp in
$\Funf(\Set)$ and thus also of some $E^P$ with $P$ a polynomial functor.
\end{itemize}
\vspace*{-2mm}
Note that the last property holds because $H/\Mndf(\Set)$ is an Eilenberg-Moore category and $E^V$ is the free Eilenberg-Moore algebra on the fp object $V$. 
%A technical difficulty that arises is that for a projective $P$ in $\Funf(\Set)$, $E^P$
%is not necessarily projective in $H/\Mndf(\Set)$ (w.r.t.~monad morphisms with surjective components). However, $E^P$ is projective enough for our purposes: %%
%(for details see the appendix). 
%
%\begin{proof}
%    We know that $(B,\beta)$ is the strong quotient of a $M^V$ with $V$ fp in
%    $\Funf(\Set)$, and thus also of some $M^P$, with $P$ a polynomial functor:
%    $q: M^P \to (B,\beta)$. $\Hf$ maps $q$ to something with an underlying
%    strong epi natural transformation, and $M^P$ is projective for it, by the
%    adjunction of $M^{(-)}$. The rest follows by \autoref{lfpquotient}.
%\end{proof}
It follows from~\autoref{EQquotient} that $\Coalgfg \Hf$ is the same category as $\EQ[2]$; thus their colimits in $\Coalg \Hf$ are
isomorphic and we conclude:
\vspace{-2pt}
\begin{theorem}
The locally finite fixpoint of $\Hf: H_\Sigma/\Mndf(\Set)\to H_\Sigma/\Mndf(\Set)$ is
the monad of Courcelle's algebraic trees, sending a set to the algebraic
$\Sigma$-trees over it.
\vspace{-3mm}
\end{theorem}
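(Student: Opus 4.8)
The plan is to reduce the statement to the two constructions of the monad of Courcelle's algebraic trees recalled above from~\cite{secondordermonad}. By \autoref{thm:final} the locally finite fixpoint $\LFF\Hf$ is (the carrier of) the colimit of the inclusion $\Coalgfg\Hf\hookrightarrow\Coalglfg\Hf$; since $\Coalgfg\Hf$ is filtered, this colimit agrees with the colimit of the same diagram formed in $\Coalg\Hf$ (it is again lfg by \autoref{prop:lfgcolim}). On the other hand, \cite{secondordermonad} exhibits the monad of algebraic $\Sigma$-trees as the colimit in $\Coalg\Hf$ of the diagram $\EQ[2]$, the closure of $\EQ$ under strong quotients. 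As $\Coalgfg\Hf$ and $\EQ[2]$ are both \emph{full} subcategories of $\Coalg\Hf$, it therefore suffices to show that they have the same objects; their colimits then coincide, and the common object is, by either description, the monad of algebraic trees.

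First I would prove $\EQ[2]\subseteq\Coalgfg\Hf$. Every coalgebra in $\EQ$ has carrier $E^V=F^{H+V}$ for some fp object $V$ of $\Funf(\Set)$; since $H/\Mndf(\Set)$ is the Eilenberg-Moore category of the (finitary) monad $E^{(-)}$ and free Eilenberg-Moore algebras on fp objects are fp by \cite[Corollary~2.75]{adamek1994locally}, the carrier $E^V$ is fp, hence fg. Passing to strong quotients keeps us inside $\Coalgfg\Hf$: fg objects are closed under strong epis in any lfp category, and by \autoref{quasiEpiPreservation} the functor $\Hf$ sends strong epis to strong-epi-carried morphisms, so a strong epi in the base lifts to a coalgebra homomorphism. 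Hence every strong quotient of an $\EQ$-coalgebra is again an $\Hf$-coalgebra with fg carrier, i.e.\ lies in $\Coalgfg\Hf$.

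The converse inclusion $\Coalgfg\Hf\subseteq\EQ[2]$ is precisely \autoref{EQquotient}: any $\Hf$-coalgebra whose carrier is fg is a strong quotient of a coalgebra from $\EQ$, and therefore belongs to $\EQ[2]$. This yields $\Coalgfg\Hf=\EQ[2]$ as full subcategories of $\Coalg\Hf$, so $\LFF\Hf=\colim\Coalgfg\Hf=\colim\EQ[2]$ is the monad of Courcelle's algebraic trees. To read off its action on a set $X$ one uses that filtered colimits in $H/\Mndf(\Set)$ are created by the projection to $\Mndf(\Set)$ and, via \autoref{mndfilteredcolimits}, computed object-wise in $\Funf(\Set)$; this is what makes the colimit send $X$ to the set of algebraic $\Sigma$-trees over $X$.

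The main obstacle is \autoref{EQquotient}. Its proof is the analogue, with the base category $\Funf(\Set)$ in place of $\Set$, of \autoref{lfpquotient} under \autoref{projectiveassumption}: one writes an fg $\Hf$-coalgebra $(B,\beta)$ as a strong quotient $q\colon E^P\twoheadrightarrow B$ with $P$ a polynomial (hence strong-epi-projective) functor, transports the coalgebra structure back along $q$ using projectivity of $P$ together with \autoref{quasiEpiPreservation}, and verifies that the resulting coalgebra on $E^P$ lies in $\EQ$. The three ingredients this relies on --- fp objects of $\Funf(\Set)$ are quotients of polynomial functors, polynomial functors are strong-epi-projective, and every fg object of $H/\Mndf(\Set)$ is a quotient of some $E^P$ with $P$ polynomial (because $H/\Mndf(\Set)$ is an Eilenberg-Moore category with $E^V$ free on $V$) --- are exactly the points listed above; with these in hand the remainder of the argument is bookkeeping about full subcategories and filtered colimits.
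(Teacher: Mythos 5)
Your proposal follows essentially the same route as the paper: it reduces the theorem to the identification $\Coalgfg \Hf = \EQ[2]$, using \autoref{EQquotient} (itself proved via projectivity of polynomial functors and \autoref{quasiEpiPreservation}) for one inclusion and the fact that the carriers $E^V$ are fp (hence fg) together with closure of fg objects under strong quotients for the other, and then concludes that the colimit from \autoref{thm:final} coincides with $\colim\EQ[2]$, which is the monad of algebraic trees by \cite{secondordermonad}. This matches the paper's argument, with your write-up merely spelling out the inclusion $\EQ[2]\subseteq\Coalgfg\Hf$ that the paper leaves implicit.
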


\section{Conclusions and Future Work}\enlargethispage{10pt}
\vspace{-2mm}
\label{sec:con}
We have introduced the locally finite fixpoint of a finitary mono-preserving endofunctor on an lfp category. We proved that this fixpoint is characterized by two universal properties: it is the final lfg coalgebra and the initial \fgiterative algebra for the given endofunctor. And we have seen many instances where the LFF is the domain of behaviour of finite-state and finite-equation systems. In particular all previously known instances of the rational fixpoint are also instances of the LFF, and we have obtained a number of interesting further instances not captured by the rational fixpoint.  

On a more technical level, the LFF solves a problem that sometimes makes the
rational fixpoint hard to apply. The latter identifies behaviourally equivalent
states (i.e.~is a subcoalgebra of the final coalgebra) if the classes of fp and
fg objects coincide. This condition, however, may be false or unknown (and
sometimes non-trivial to establish) in a given lfp category. But the LFF always
identifies behaviourally equivalent states.  

%always exists for finitary
%mono-preserving endofunctors on lfp categories. And we have seen that the LFF is
%an applicable framework to talk about finite-state and finite-equation systems,
%because we inherit all known instances of the rational fixpoint and in addition
%obtain a lot more instances, in order to characterize well-established notions
%by universal properties.

There are a number of interesting topics for future work concerning the LFF. First, it should be interesting to obtain further instances of the LFF, e.g.~analyzing the behaviour of tape machines~\cite{coalgchomsky} may perhaps lead to a description of the recursively enumerable languages by the LFF. Second, syntactic descriptions of the LFF are of interest.  In works such as~\cite{brs_lmcs,bbrs_ic,bms13,myersphd} Kleene type theorems and axiomatizations of the behaviour of finite systems are studied. Completeness of an axiomatization is then established by proving that expressions modulo axioms form the rational fixpoint. It is an interesting question whether the theory of the LFF we presented here may be of help as a tool for syntactic descriptions and axiomatizations of further system types.  

As we have mentioned already the rational fixpoint is the starting point for the coalgebraic study of iterative and iteration theories. A similar path could be followed based on the LFF and this should lead to new coalgebraic iteration/recursion principles, in particular in instances such as context-free languages or constructively $S$-algebraic formal power series.

Another approach to more powerful recursive definition principles are abstract operational rules (see~\cite{Klin11} for an overview). It has been shown that certain rule formats define operations on the rational fixpoint~\cite{bmr12,mbmr13}, and it should be investigated whether a similar theory can be developed based on the LFF.

Finally, in the special setting of Eilenberg-Moore categories one could base the study of finite systems on \emph{free} finitely generated algebras (rather than all fp or all fg algebras). Does this give a third fixpoint capturing behaviour of finite state systems with side effects besides the rational fixpoint and the LFF? And what is then the relation between the three fixpoints? Also the parallelism in the technical development between rational fixpoint and LFF indicates that there should be a general theory that is parametric in a class of ``finite objects'' and that allows to obtain results about the rational fixpoint, the LFF and other possible ``finite behaviour domains'' as instances.

%However, there are still some notions from the Chomsky hierarchy left which
%have not been characterized by universal properties yet. \citet{coalgchomsky}
%characterize tape machines coalgebraically as an automata having two kinds of
%side effects. One is the modification of a tape and the other is a possible
%$\varepsilon$-transition to another state. While the former monad is finitary,
%its extension by $\varepsilon$-transitions is not. It is open how to 
%modify the setting in order to describe their behaviour -- the decidable
%languages -- by a universal property.

%\newpage

%Furthermore, the LFF solves the technical fg vs fp question a category in which
%we consider finite-state coalgebras -- however not by answering it, but by
%providing a uniform framework that does not rely on the answer and still
%collects precisely the finite behaviours.

%
% Bibliography
%
\bibliography{refs,all2,delta2}

\begin{thebibliography}{45}
\providecommand{\natexlab}[1]{#1}
\providecommand{\url}[1]{\texttt{#1}}
\providecommand{\urlprefix}{}

\bibitem[{Aczel et~al.(2003)Aczel, Ad\'amek, Milius, and Velebil}]{aamvcia}
Aczel, P., Ad\'amek, J., Milius, S., Velebil, J.: Infinite trees and completely
  iterative theories: A coalgebraic view.
\newblock Theoret. Comput. Sci. 300, 1--45 (2003)

\bibitem[{Ad{\'{a}}mek et~al.(2011)Ad{\'{a}}mek, Milius, and
  Velebil}]{highrecursion}
Ad{\'{a}}mek, J., Milius, S., Velebil, J.: Semantics of higher-order recursion
  schemes.
\newblock Log.~Methods Comput.~Sci. 7(1) (2011)

\bibitem[{Ad\'amek et~al.(2010)Ad\'amek, Milius, and Velebil}]{amv_em1}
Ad\'amek, J., Milius, S., Velebil, J.: Equational properties of iterative
  monads.
\newblock Inform.~and Comput. 208, 1306--1348 (2010)

\bibitem[{Ad\'amek et~al.(2011)Ad\'amek, Milius, and Velebil}]{amv_em2}
Ad\'amek, J., Milius, S., Velebil, J.: Elgot theories: a new perspective of the
  equational properties of iteration.
\newblock Math.~Structures Comput.~Sci. 21(2), 417--480 (2011)

\bibitem[{Adámek(1974)}]{freealgebras}
Adámek, J.: Free algebras and automata realizations in the language of
  categories.
\newblock Comment.~Math.~Univ.~Carolin. 015(4), 589--602 (1974)

\bibitem[{Adámek et~al.(2006)Adámek, Milius, and Velebil}]{iterativealgebras}
Adámek, J., Milius, S., Velebil, J.: Iterative {Algebras} at {Work}.
\newblock Math.~Structures Comput.~Sci. 16(6), 1085--1131 (2006)

\bibitem[{Adámek et~al.(2011)Adámek, Milius, and Velebil}]{secondordermonad}
Adámek, J., Milius, S., Velebil, J.: On second-order iterative monads.
\newblock Theoret.~Comput.~Sci. 412(38), 4969--4988 (2011)

\bibitem[{Adámek and Rosick{\'y}(1994)}]{adamek1994locally}
Adámek, J., Rosick{\'y}, J.: Locally Presentable and Accessible Categories.
\newblock Cambridge University Press (1994)

\bibitem[{Barr(1970)}]{freetriples}
Barr, M.: Coequalizers and free triples.
\newblock Math.~Z. 116, 307--322 (1970)

\bibitem[{Bartels(2004)}]{bartelsphd}
Bartels, F.: On generalized coinduction and probabilistic specification
  formats: Distributive laws in coalgebraic modelling.
\newblock Ph.D. thesis, Vrije Universiteit Amsterdam (2004)

\bibitem[{Bloom and \'Esik(1993)}]{be}
Bloom, S.L., \'Esik, Z.: Iteration Theories: the equational logic of iterative
  processes.
\newblock EATCS Monographs on Theoretical Computer Science, Springer (1993)

\bibitem[{Bonsangue et~al.(2013)Bonsangue, Milius, and Silva}]{bms13}
Bonsangue, M., Milius, S., Silva, A.: Sound and complete axiomatizations of
  coalgebraic language equivalence.
\newblock ACM Trans. Comput. Log. 14(1:7), 52 pp. (2013)

\bibitem[{Bonsangue et~al.(2012)Bonsangue, Milius, and Rot}]{bmr12}
Bonsangue, M.M., Milius, S., Rot, J.: On the specification of operations on the
  rational behaviour of systems.
\newblock In: Luttik, B., Reniers, M.A. (eds.) Proc.~Combined Workshop on
  Expressiveness in Concurrency and Structural Operational Semantics
  (EXPRESS/SOS'12). Electron.~Proc.~Theoret.~Comput.~Sci., vol.~89, pp. 3--18
  (2012)

\bibitem[{Borceux(1994)}]{borceux1994handbook}
Borceux, F.: Handbook of Categorical Algebra: Volume 1, Basic Category Theory.
\newblock Encyclopedia of Mathematics and its Applications, Cambridge
  University Press (1994)

\bibitem[{Cenciarelli and Moggi(1993)}]{cm93}
Cenciarelli, P., Moggi, E.: A syntactic approach to modularity in denotational
  semantic.
\newblock In: Proc. 5th CTCS. CWI Technical Report (1993)

\bibitem[{Courcelle(1983)}]{courcelle}
Courcelle, B.: Fundamental properties of infinite trees.
\newblock Theoret.~Comput.~Sci. 25, 95--169 (1983)

\bibitem[{Droste et~al.(2009)Droste, Kuich, and
  Vogler}]{handbookWeightedAutomata}
Droste, M., Kuich, W., Vogler, H.: Handbook of Weighted Automata.
\newblock Springer Publishing Company, Incorporated, 1st edn. (2009)

\bibitem[{Elgot(1975)}]{elgot}
Elgot, C.: Monadic computation and iterative algebraic theories.
\newblock In: Rose, H.E., Sheperdson, J.C. (eds.) Logic Colloquium '73.
  vol.~80, pp. 175--230. North-Holland Publishers, Amsterdam (1975)

\bibitem[{Fliess(1974)}]{Fliess1974}
Fliess, M.: Sur divers produits de séries formelles.
\newblock Bulletin de la Société Mathématique de France 102, 181--191 (1974)

\bibitem[{Gabriel and Ulmer(1971)}]{gu71}
Gabriel, P., Ulmer, F.: Lokal pr\"asentierbare {K}ategorien, Lecture Notes
  Math., vol. 221.
\newblock Springer-Verlag (1971)

\bibitem[{Ghani et~al.(2005)Ghani, Lüth, and Marchi}]{monadsofcoalgebras}
Ghani, N., Lüth, C., Marchi, F.D.: Monads of coalgebras: Rational terms and
  term graphs.
\newblock Math.~Structures Comput.~Sci. 15, 433--451 (2005)

\bibitem[{Goncharov(2013)}]{goncharov13}
Goncharov, S.: Trace semantics via generic observations.
\newblock In: Heckel, R., Milius, S. (eds.) Algebra and Coalgebra in Computer
  Science (CALCO 2013). Lecture Notes Comput.~Sci., vol. 8089, pp. 158--174.
  Springer (2013)

\bibitem[{Goncharov et~al.(2014)Goncharov, Milius, and Silva}]{coalgchomsky}
Goncharov, S., Milius, S., Silva, A.: Towards a coalgebraic {C}homsky
  hierarchy.
\newblock In: Diaz, J., Lanese, I., Sangiorgi, D. (eds.) Proc.~8th IFIP TC 1/WG
  2.2 International Conference on Theoretical Computer Science (TCS'14).
  Lecture Notes Comput.~Sci., vol. 8705, pp. 265--280. Springer (2014)

\bibitem[{Harrison and Havel(1972)}]{HarrisonHavel72}
Harrison, M.A., Havel, I.M.: Real-time strict deterministic languages.
\newblock {SIAM} J. Comput. 1(4), 333--349 (1972)

\bibitem[{Hyland et~al.(2006)Hyland, Plotkin, and Power}]{combiningeffects}
Hyland, M., Plotkin, G., Power, J.: Combining effects: Sum and tensor.
\newblock Theoret.~Comput.~Sci. 357(1--3), 70--99 (2006)

\bibitem[{Jacobs(2006)}]{Jacobs05abialgebraic}
Jacobs, B.: A bialgebraic review of regular expressions, deterministic automata
  and languages.
\newblock In: et~al., K.F. (ed.) Goguen Festschrift. Lecture Notes
  Comput.~Sci., vol. 4060, pp. 375--404 (2006)

\bibitem[{Klin(2011)}]{Klin11}
Klin, B.: Bialgebras for structural operational semantics: An introduction.
\newblock Theoret.~Comput.~Sci. 412(38), 5043--5069 (2011)

\bibitem[{Lambek(1968)}]{lambek}
Lambek, J.: A fixpoint theorem for complete categories.
\newblock Math.~Z. 103, 151--161 (1968)

\bibitem[{MacLane(1998)}]{lane1998categories}
MacLane, S.: Categories for the Working Mathematician.
\newblock Graduate Texts in Mathematics, Springer New York, 2nd edn. (1998)

\bibitem[{Milius(2005)}]{m_cia}
Milius, S.: Completely iterative algebras and completely iterative monads.
\newblock Inform. and Comput. 196, 1--41 (2005)

\bibitem[{Milius(2010)}]{streamcircuits}
Milius, S.: A {Sound} and {Complete} {Calculus} for {finite} {Stream}
  {Circuits}.
\newblock In: Proc. 25th Annual Symposium on Logic in Computer Science
  (LICS'10). pp. 449--458 (2010)

\bibitem[{Milius et~al.(2013)Milius, Bonsangue, Myers, and Rot}]{mbmr13}
Milius, S., Bonsangue, M.M., Myers, R.S., Rot, J.: Rational operation models.
\newblock In: Mislove, M. (ed.) Proc.~29th conference on Mathematical
  Foundations of Programming Science (MFPS XXIX).
  Electron.~Notes~Theor.~Comput.~Sci., vol. 298, pp. 257--282 (2013)

\bibitem[{Milius and Moss(2006)}]{mmcatsolrps}
Milius, S., Moss, L.S.: The category theoretic solution of recursive program
  schemes.
\newblock Theoret. Comput. Sci. 366, 3--59 (2006)

\bibitem[{Milius and Wi\ss{}mann(2015)}]{MiliusWissmannRatlambda}
Milius, S., Wi\ss{}mann, T.: Finitary corecursion for the infinitary lambda
  calculus.
\newblock In: Proc. 6th Conference on Algebra and Coalgebra in Computer
  Science, CALCO 2015. Leibniz International Proceedings in Informatics (2015)

\bibitem[{Myers(2011)}]{myersphd}
Myers, R.: Rational coalgebraic machines in varieties: Languages, completeness
  and automatic proofs.
\newblock Ph.D. thesis, Imperial College London, Department of Computing (2011)

\bibitem[{Petre and Salomaa(2009)}]{Petre:2009:ASP}
Petre, I., Salomaa, A.: Algebraic systems and pushdown automata.
\newblock In: Handbook of Weighted Automata, pp. 257--289. Springer (2009)

\bibitem[{Plotkin and Turi(1997)}]{plotkinturi97}
Plotkin, G., Turi, D.: Towards a mathematical operational semantics.
\newblock In: Proc. 12th LICS Conf. pp. 280--291. IEEE, Computer Society Press
  (1997)

\bibitem[{Rutten(1998)}]{Rutten:1998:ACE}
Rutten, J.: Automata and coinduction (an exercise in coalgebra).
\newblock In: Sangiorgi, D., de~Simone, R. (eds.) Proc. CONCUR 1998. LNCS, vol.
  1466, pp. 194--218. Springer (1998)

\bibitem[{Rutten(2000)}]{Rutten:2000:UCT:abbrev}
Rutten, J.: {Universal Coalgebra: {A}} theory of systems.
\newblock Theor.\ Comp.\ Sci. 249(1), 3--80 (2000)

\bibitem[{Silva et~al.(2011)Silva, Bonchi, Bonsangue, and Rutten}]{bbrs_ic}
Silva, A., Bonchi, F., Bonsangue, M.M., Rutten, J.J.M.M.: Quantitative {K}leene
  {C}oalgebras.
\newblock Inform.~and Comput. 209(5), 822--849 (2011)

\bibitem[{Silva et~al.(2013)Silva, Bonchi, Bonsangue, and
  Rutten}]{generalizeddeterminization}
Silva, A., Bonchi, F., Bonsangue, M.M., Rutten, J.J.M.M.: Generalizing
  determinization from automata to coalgebras.
\newblock Logical Methods in Computer Science 9(1) (2013)

\bibitem[{Silva et~al.(2010)Silva, Bonsangue, and Rutten}]{brs_lmcs}
Silva, A., Bonsangue, M.M., Rutten, J.J.M.M.: Non-deterministic {K}leene
  coalgebras.
\newblock Log.~Methods~Comput.~Sci. 6(3:23), 39 pp. (2010)

\bibitem[{Winter et~al.(2013)Winter, Bonsangue, and Rutten}]{coalgcontextfree}
Winter, J., Bonsangue, M., Rutten, J.: Coalgebraic {Characterizations} of
  {Context-{Free}} {Languages}.
\newblock Log.~Methods Comput.~Sci. 9(3) (September 2013)

\bibitem[{Winter et~al.(2015)Winter, Bonsangue, and Rutten}]{jcssContextFree}
Winter, J., Bonsangue, M.M., Rutten, J.J.: Context-free coalgebras.
\newblock J.~Comput.~System Sci. 81(5), 911 -- 939 (2015)

\bibitem[{Wißmann(2015)}]{wissmann2015}
Wißmann, T.: The {L}ocally {F}inite {F}ixpoint and its properties.
\newblock Master's thesis, Friedrich-Alexander Universität Erlangen-Nürnberg
  (April 2015), available at
  \url{http://thorsten-wissmann.de/theses/ma-wissmann.pdf}

\end{thebibliography}

\iffull
    \newpage
    \begin{appendix}

      \section{Omitted Proofs and Results}

\subsection*{Technical Lemmas for \autoref{prop:lfgcolim}}

We first show directed unions of fg-carried coalgebras are lfg.
\begin{lemma}
    \label{unionoffgcoalgebras}
    Every directed union of coalgebras from $\Coalgfg H$ is an lfg coalgebra.
\end{lemma}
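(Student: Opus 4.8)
The plan is to reduce the claim immediately to the defining property of finitely generated objects, so that almost nothing is left to do. Let $(C,c)$ be the colimit in $\Coalg H$ of a directed diagram $(X_i,x_i)_{i\in I}$ with all $X_i$ finitely generated and all connecting morphisms mono-carried, and write $m_i\colon (X_i,x_i)\to (C,c)$ for the colimit cocone. Since the forgetful functor $\Coalg H\to\C$ creates colimits, $C$ is the colimit in $\C$ of the directed diagram $(X_i)_{i\in I}$, and the latter is a directed colimit of monomorphisms; moreover each $m_i$ is a coalgebra homomorphism.

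Next I would check the condition of \autoref{lfgcoalgebra} for $(C,c)$. Given $f\colon S\to C$ with $S$ finitely generated, the functor $\C(S,-)$ preserves the directed colimit of monos $C=\colim_i X_i$, so the canonical comparison map $\colim_i \C(S,X_i)\to\C(S,C)$ is a bijection; since the connecting maps $\C(S,m_{ij})$ are injective, every element of the left-hand side already comes from a single stage, so $f$ factors through some colimit injection $m_i$, i.e.\ there is $f'\colon S\to X_i$ with $m_i\cdot f'=f$. Then $(P,p):=(X_i,x_i)$ lies in $\Coalgfg H$, and the homomorphism $h:=m_i\colon(P,p)\to(C,c)$ together with $f'$ witnesses the required factorization; hence $(C,c)$ is lfg.

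I do not expect a genuine obstacle here. The one point that deserves a word is that the defining property of an fg object concerns directed colimits \emph{of monos}, so it is essential that the connecting morphisms of the diagram be monic — which is exactly what a directed union of coalgebras provides. Note also that, in contrast to the proof of \autoref{prop:lfgcolim}, no image factorization and no mono-preservation assumption on $H$ is needed, and one does not even have to argue that the colimit injections $m_i$ are monic: the factorization of $f$ through a single stage is all the argument uses.
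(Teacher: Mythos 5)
Your proof is correct and follows essentially the same route as the paper: apply the fact that the forgetful functor creates colimits, so the carrier is a directed colimit of monos in $\C$, then use that $\C(S,-)$ preserves such colimits to factor $f$ through a colimit injection, which is already a coalgebra homomorphism from an object of $\Coalgfg H$. (Your aside about injectivity of the maps $\C(S,m_{ij})$ is not actually needed — every element of a directed colimit of sets comes from some stage regardless — but it does no harm.)
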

\begin{proof}
    Let $D: (I,\le) \to \Coalg H, (D_i,d_i) := Di$ be a directed diagram of
    coalgebras from $\Coalgfg H$ and of mono-carried morphisms. Name the colimit
    cocone $c_i: (D_i,d_i) \to (A,a)$ in $\Coalg H$. To check
    \autoref{lfgcoalgebra}, let $S$ be a finitely generated object with $f:
    S \to A$ in $\C$. As colimits in $\Coalg H$ are created by the forgetful
    functor $U: \Coalg H \to \C$, and because $U\cdot D$ is a directed diagram
    of monos and $S$ is an fg object, we obtain some factorization as shown below:
    \[
    \begin{tikzcd}[ampersand replacement=\&]
        S \arrow{rr}{f}
          \arrow{dr}[below left]{f'}
         \& \& U(A,a) = A
        \\
        \& UDi = D_i \arrow{ur}[below right]{Uc_i}
    \end{tikzcd}
    \]
    Note that because $U$ creates the colimits, we know that the colimit
    injection for $UDi$ in $\C$ is precisely $Uc_i$. \qed
\end{proof}

Next follow two easy technical lemmas on directed colimits. 
\begin{lemma}\label{colimitepi}
    For a directed diagram $D: \D\to \C$ of subobjects $m_i: C_i\rightarrowtail
    C$ of $C$, the colimit $(d_i: C_i\to \colim D)_{i\in \D}$ is obtained by
    taking the (strong epi,mono)-factorization of $\coprod C_i
    \xrightarrow{[m_i]} C$.
\end{lemma}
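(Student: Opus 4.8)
The plan is to identify the middle object of the given (strong epi, mono)-factorization of $[m_i]\colon \coprod_i C_i \to C$ with $\colim D$ by exhibiting a \emph{second} (strong epi, mono)-factorization of the same morphism that factors through $\colim D$, and then to invoke the essential uniqueness of such factorizations. Write $\coprod_i C_i \overset{e}{\twoheadrightarrow} L \overset{m}{\rightarrowtail} C$ for the given factorization, let $q_i\colon C_i \to \colim D$ denote the colimit injections, and let $\bar m\colon \colim D \to C$ be the canonical comparison determined by $\bar m\cdot q_i = m_i$. Since each $m_i$ is monic and $m_j\cdot D(i\le j)=m_i$, every connecting morphism $D(i\le j)$ is monic as well, so $D$ is a directed diagram of monomorphisms.

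The two facts I would establish are: (i) the canonical morphism $q:=[q_i]\colon \coprod_i C_i \to \colim D$ is a strong epimorphism, and (ii) $\bar m$ is a monomorphism. For (i), recall that $\colim D$ may be presented as the coequalizer of the parallel pair $\coprod_{i\le j} C_i \rightrightarrows \coprod_i C_i$ whose components on the $(i\le j)$-summand are $\inj_i$ and $\inj_j\cdot D(i\le j)$; the coequalizing map of this presentation is exactly $q$, so $q$ is a regular, hence strong, epimorphism. For (ii), I would view the family $(m_i)$ as a directed diagram in the arrow category of \C, with objects the monos $m_i\colon C_i \to C$ and connecting morphisms $(D(i\le j),\id_C)$. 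Its colimit is computed object-wise and equals $\bar m\colon \colim D \to C$, the codomain diagram being constant at $C$ over the directed (hence connected) $\D$. Since in an lfp category directed colimits of monomorphisms are again monomorphisms \cite{adamek1994locally}, $\bar m$ is monic.

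With (i) and (ii) in hand, $[m_i] = \bar m \cdot q$ is a factorization of $[m_i]$ into a strong epi followed by a mono, exactly as $[m_i] = m\cdot e$ is. By the essential uniqueness of (strong epi, mono)-factorizations, which follows from the diagonal fill-in property recalled in Section~\ref{sec:prelim}, there is a unique isomorphism $\psi\colon \colim D \to L$ with $\psi\cdot q = e$ and $m\cdot\psi = \bar m$. Consequently the cocone $d_i := e\cdot \inj_i = \psi\cdot q_i$ on $L$ is the image under the isomorphism $\psi$ of the colimit cocone $(q_i)$, and is therefore itself a colimit cocone. Hence $L$, equipped with the injections $d_i$, is the colimit of $D$, which is precisely the assertion.

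The only substantial ingredient is step (ii), that $\bar m$ is monic, i.e.~that the directed colimit of the subobjects $C_i$ embeds into $C$; everything else is formal manipulation with the factorization system. I expect (ii) to be the main obstacle in the sense that it is where the hypothesis that \C is lfp is genuinely used: in a concrete category such as \Set it amounts to the elementary observation that a directed union of subobjects is a subobject, but in the general lfp setting it rests on the closure of monomorphisms under directed colimits, which is the key structural property imported from \cite{adamek1994locally}.
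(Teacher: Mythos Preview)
Your proof is correct and follows essentially the same approach as the paper's: both exhibit the canonical map $\bar m\colon \colim D \to C$ as a monomorphism (you via closure of monos under directed colimits in lfp categories, the paper via \cite[Proposition~1.62(ii)]{adamek1994locally} after first noting the colimit injections are monic) and the copairing $[q_i]\colon \coprod_i C_i \to \colim D$ as a strong epimorphism (you via the coequalizer presentation of the colimit, the paper by observing that colimit injections are jointly strongly epic), thereby identifying $\colim D$ with the factorization object. The only cosmetic difference is that you invoke uniqueness of the factorization explicitly, whereas the paper simply displays the factorization.
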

\begin{proof}
    At first, the $(m_i)_{i\in \D}$ form a cocone, so we have a unique $m:
    \colim D \to C$ with ${m\cdot d_i = m_i}$, and $d_i$ is monic. As $\C$ is
    lfp and both $d_i$ and $m_i$ are monic, \cite[Proposition 1.62
    (ii)]{adamek1994locally} gives us that $m$ is monic, too. The copair of a
    family of jointly strongly epic family $[d_i]:
    \coprod C_i\to \colim D$ is a strong epi and therefore we have the
    factorization:
    \[
    \begin{tikzcd}[anchor=base]
        \coprod C_i \arrow{rr}{[m_i]}
        \arrow[->>]{dr}[below left]{[d_i]}
        \PMNC \PMNC C
        \\
        \PMNC \colim D
        \arrow[>->]{ur}[below right]{m}
    \end{tikzcd}
    \tag*{\qed}
    \]
\end{proof}
\begin{lemma}\label{unionsofimages}
    Images of colimits in $\Coalg H$ are directed unions of images. More
    precisely, for a diagram $D: \D\to \Coalg H$, given a colimit cocone $(c_i:
    Di \to C)_{i\in \D}$ and a morphism $f: C\to B$, define $A_i$ as $\Im(f\cdot
    c_i)$. Then $\Im(f)$ is the directed union of the $A_i$ together with the
    induced monomorphisms:
    \begin{equation} \label{equnionofimages}
        \begin{tikzcd}
            Di \arrow[->>]{rr}{e_i} \arrow{d}[left]{c_i} & {} &
            A_i \arrow[dashed]{dl}[above left]{d_i} 
            \arrow[>->]{d}[right]{m_i} \\
            C \arrow[->>]{r}{e}
            \arrow[shiftarr={yshift=-3ex}]{rr}[below]{f}
            \arrow[->>]{r}{e}
            & \Im(f) \arrow[>->]{r}{m} & B
        \end{tikzcd}
    \end{equation}
\end{lemma}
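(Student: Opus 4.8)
The plan is to carry out the argument for the underlying $\C$-morphisms and then transport it back: the forgetful functor $U\colon \Coalg H \to \C$ creates colimits and reflects monos and epis, and since $H$ preserves monos, (strong epi, mono)-factorizations lift along $U$; so it suffices to prove the statement in $\C$ and to note that every morphism we produce is a coalgebra homomorphism.

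First I would check that $(A_i)_{i\in\D}$ is a (directed) diagram of subobjects of $B$. For a connecting morphism $g\colon i\to j$ of $\D$ we have $c_j\cdot Dg = c_i$, hence $m_j\cdot e_j\cdot Dg = f\cdot c_j\cdot Dg = f\cdot c_i = m_i\cdot e_i$; since $e_i$ is a strong epi and $m_j$ a mono, the diagonal fill-in yields a unique $a_{ij}\colon A_i\to A_j$ with $a_{ij}\cdot e_i = e_j\cdot Dg$ and $m_j\cdot a_{ij} = m_i$. The second equation makes $a_{ij}$ monic and exhibits $(A_i,m_i)$ as a diagram of subobjects of $B$, with functoriality following from uniqueness of diagonals. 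By \autoref{colimitepi} its colimit is the (strong epi, mono)-factorization $[m_i] = m'\cdot e'$ of $\coprod_i A_i \xrightarrow{[m_i]} B$; write $a'_i\colon A_i\to\colim_i A_i$ for the colimit injections, so that $m'\cdot a'_i = m_i$.

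Next I would show that the subobjects $m\colon\Im(f)\rightarrowtail B$ and $m'\colon\colim_i A_i\rightarrowtail B$ coincide, by proving each lies below the other. For $\colim_i A_i\le\Im(f)$: factoring $e\cdot c_i$ and using uniqueness of (strong epi, mono)-factorizations together with $f = m\cdot e$ shows $m_i$ factors through $m$, so each $A_i$ — and hence their union — lies below $\Im(f)$. For $\Im(f)\le\colim_i A_i$: the morphisms $a'_i\cdot e_i\colon Di\to\colim_i A_i$ form a cocone on $D$, using $e_j\cdot Dg = a_{ij}\cdot e_i$ and $a'_j\cdot a_{ij} = a'_i$ (this last identity obtained by cancelling the mono $m'$ from $m'\cdot a'_j\cdot a_{ij} = m_j\cdot a_{ij} = m_i = m'\cdot a'_i$); so the universal property of $C = \colim_i Di$ gives a unique $u\colon C\to\colim_i A_i$ with $u\cdot c_i = a'_i\cdot e_i$, and then $m'\cdot u\cdot c_i = m_i\cdot e_i = f\cdot c_i$ for all $i$, so $m'\cdot u = f$ since the $c_i$ are jointly epic; thus $f$ factors through the mono $m'$ and $\Im(f)\le\colim_i A_i$. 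By antisymmetry of the subobject order we get an isomorphism $\Im(f)\cong\colim_i A_i$ compatible with the monos into $B$, under which the colimit injection $a'_i$ is identified with the map $d_i$ of~\eqref{equnionofimages}; the remaining triangle $e\cdot c_i = d_i\cdot e_i$ then follows by cancelling the mono $m$ from $m\cdot e\cdot c_i = f\cdot c_i = m_i\cdot e_i = m\cdot d_i\cdot e_i$.

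I expect the only mildly delicate bookkeeping to be the verification that $(A_i)$ really is a diagram of subobjects (in particular that the $a_{ij}$ compose coherently) and that $(a'_i\cdot e_i)$ is a compatible cocone on $D$; the remainder is a routine chase with the universal properties of filtered colimits and of the image factorization.
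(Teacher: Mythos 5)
Your proof is correct and follows the same skeleton as the paper's: factorize $f\cdot c_i$ and $f$, obtain the connecting monomorphisms between the images $A_i$ by diagonal fill-in, observe that the $A_i$ form a directed diagram of subobjects of $B$, and invoke \autoref{colimitepi}. The only genuine divergence is how the identification of $\Im(f)$ with the union is concluded. The paper argues via strong epimorphisms: the copair $[c_i]$ of the colimit injections is a strong epi, hence so is $e\cdot[c_i]=[d_i]\cdot\coprod_i e_i$, hence $[d_i]$ is a strong epi; thus $m\cdot[d_i]$ is already the (strong epi, mono)-factorization of $[m_i]$ and \autoref{colimitepi} exhibits $\Im(f)$ as the colimit on the nose, with the $d_i$ as injections. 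You instead form $\colim_i A_i$ abstractly via \autoref{colimitepi} and prove mutual domination in the subobject poset of $B$: one inequality from minimality of the image (diagonal fill-in against $m'$), the other by assembling the cocone $(a_i'\cdot e_i)$ on $D$ and using joint epicness of the $c_i$ to get $u\colon C\to\colim_i A_i$ with $m'\cdot u=f$, then antisymmetry. Both arguments are sound; the paper's is slightly shorter and delivers the triangles $d_i\cdot e_i=e\cdot c_i$ directly, while yours trades the fact about copairs of jointly strongly epic families for the two-sided comparison and the cocone-compatibility bookkeeping, which you carry out correctly. Your opening reduction to $\C$ (created colimits, reflected monos/epis, factorizations lifting because $H$ preserves monos, so all constructed morphisms are coalgebra homomorphisms) is exactly the level of care the paper itself takes in its own proof.
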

\begin{proof}
    As colimits in $\Coalg H$ are created by the forgetful functor $U: \Coalg H
    \to \C$, we consider only the objects first. Take the (strong
    epi-carried,mono-carried)-factorizations $f\cdot c_i = m_i\cdot e_i$ for
    each $i\in \D$, and $f= m\cdot e$. Then \eqref{equnionofimages}
    where $d_i$ is induced by the strong epi $e_i$. Notice that by $m\cdot d_i =
    m_i$, $d_i$ is a mono as well.
    For any morphism $g: Di\to Dj$ we get a mono in $\bar g: A_i\rightarrowtail A_j$ by the
    strong epi $e_i$:
    \[
        \begin{tikzcd}[ampersand replacement=\&]
            Di\arrow[->>]{r}{e_i}
            \arrow{d}[left]{g}
            \& A_i \arrow[>->]{dr}[above right]{d_i}
            \arrow[dashed,>->]{d}[left]{\bar g}
            \\
            Dj\arrow[->>]{r}{e_j}
            \&
            A_j \arrow[>->]{r}[below]{d_j}
            \&
            \Im(f)
        \end{tikzcd}
    \]
    By $d_j\cdot\bar g = d_i$, we know that $\bar g$ is a mono as well. The
    $d_i$ also ensure that between each pair of objects $A_i,A_j$ there is at
    most one morphism. With this relation to the $D_i$, we also inherit the
    existence of upper bounds in $A_i$, which can be summarized in: the $A_i$
    form a directed diagram of monos in $\C$, i.e. a directed union in $\Coalg
    H$.

    To see that $\Im(f)$ is indeed its colimit, consider
    \[
        \begin{tikzcd}[ampersand replacement=\&]
            \coprod_{i} Di
            \arrow[->>]{r}{[c_i]}
            \arrow[->>]{d}[left]{\coprod e_i}
            \&
            C
            \arrow[->>]{d}{e}
            \\
            \coprod A_i
            \arrow{r}[below]{[d_i]}
            \&
            \Im(f)
        \end{tikzcd}
    \]
    which commutes, because \eqref{equnionofimages} did for every $i\in \D$.
    The copair of strong epis $[c_i]$ itself is a strong epi and so $e\cdot [c_i]$ and
    $[d_i]\cdot \coprod e_i$ as well. So
    $[d_i]$ is a strong epi and $[m_i]$ factors into $m$ and $[d_i]$, and by
    \autoref{colimitepi} $\Im(f)$, is the colimit.
    \[
    \begin{tikzcd}[ampersand replacement=\&]
        \coprod A_i \arrow[->>]{r}{[d_i]}
        \arrow[shiftarr={yshift=5mm}]{rr}[above]{[m_i]}
        \&
        \Im(f) \arrow[>->]{r}{m}
        \&
        B
    \end{tikzcd}
        \tag*{\qed}
    \]
\end{proof}

\subsection*{Proof of \autoref{prop:lfgcolim}}
\begin{proof}
    Let $c_i: (X_i,x_i) \to (X,x)$ be a
    colimit cocone of a filtered diagram with $(X_i,x_i)$ from $\Coalgfg H$.
    Take the (strong epi,mono)-factorizations
    \[
        c_i \equiv (
        \begin{tikzcd}
        X_i \arrow[->>]{r}{e_i} &
        T_i \arrow[>->]{r}{m_i} &
        X
        \end{tikzcd}
        )
    \]
    to get the subcoalgebras $(T_i,t_i)$ of $(X,x)$. By \autoref{unionsofimages}
    with $f=\id_X:X\to X$, $\Im(f) = X$ is the directed union of the $T_i$.
    These $T_i$ are in $\Coalgfg H$ since strong quotients of finitely generated
    objects are finitely generated. This diagram of the $T_i$ is a directed
    union with colimit $(X,x)$, both in $\B$ and in $\Coalg H$, so according
    to \autoref{unionoffgcoalgebras}, $(X,x)$ is lfg.
\end{proof}

\subsection*{Proof of \autoref{finalforfg}}
\begin{proof}
    The direction from left to right is clear, as $\Coalgfg \subseteq
    \Coalglfg$. For the other one, let $(S,s)$ be some lfg coalgebra. By
    \autoref{lfgdirectedunion}, it is the directed union of all its
    subcoalgebras with finitely generated carrier. For each subcoalgebra
    ${\inj_p}: (P,p) \rightarrow (S,s)$, there is a unique homomorphism
    $p^\dagger: (P,p)\to (L,\ell)$. By the uniqueness of $p^\dagger$ it
    follows that $L$ together with the $p^\dagger$ is a cocone. Hence there is a
    unique morphism $\exists! u: (S,s) \to (L,\ell)$ with $u\cdot \inj_p = p^\dagger$
    for each appropriate $(P,p)$. For any other morphism $\bar u: (S,s)\to
    (L,\ell)$ the equation $\bar u\cdot \inj_p = p^\dagger$ must hold as well,
    because $p^\dagger$ is unique. As the $\inj_p$ are jointly epic, one gets
    $\bar u = u$.
    \qed
\end{proof}

\subsection*{Proof of \autoref{lfgquotients}}
\begin{proof}
    Consider some strong quotient $q: (X,x) \to (Y,y)$ where $(X,x)$ is lfg. As
    $(X,x)$ is the directed colimit of its subcoalgebras with fg~carrier, we
    have that $(Y,y)$ -- the codomain of the strong epi-carried $q$ -- is the
    union of the images of these subcoalgebras by \autoref{unionsofimages}. The
    images themselves have a finitely generated carrier -- more precisely the
    factorization in $\Coalg H$ exist because $H$ preserves monos, by
    factorization. So $(Y,y)$ is the union of these images
    and thus is lfg.
    \qed
\end{proof}

\subsection*{Technical Lemmas for \autoref{LFFisIterative}}

The first task is to show that $\bar e$ is lfg. So essentially for each $f: S\to
X+\LFF H$ where $f$ is fg~we have to find a coalgebra through which $f$ factors, as
required by \autoref{lfgcoalgebra}. Roughly this is done in two steps:
firstly we construct the fg~image of $e$ in $\LFF H$, secondly the fg~image of $f$
in $\LFF H$, for the union $P$ of these images, we construct a coalgebra structure on
$X+P$ through which $f$ factors. In order to get this kind of image
factorization of $f$ and $e$ from the property of $X$ being finitely generated,
$\LFF H$ has to be expressed as a directed colimit of monos. This is done with the
following lemmas before going into the detail of the proof of the theorem.
\begin{lemma}
    Let $\Coalgfg'$ be the full subdiagram of $\Coalgfg$ consisting of those
    coalgebras $(A,a)$ where $a^\dagger: A \to \LFF H$ is a monomorphism. Then the
    forgetful functor $U': \Coalgfg' \to \C$ is a directed diagram of monos and
    filtered.
\end{lemma}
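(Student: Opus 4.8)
The plan is to establish three things about the full subcategory $\Coalgfg'\subseteq\Coalgfg H$, which together amount to the claim: (i) every morphism of $\Coalgfg'$ is mono-carried; (ii) $\Coalgfg'$ is thin, i.e.~has at most one morphism between any two objects; and (iii) $\Coalgfg'$ is nonempty and every pair of objects has an upper bound. Since $\Coalgfg'$ is essentially small (as is $\Coalgfg H$), items (ii) and (iii) say that $\Coalgfg'$ is equivalent to a directed poset, in particular filtered, and together with (i) this is exactly the assertion that $U'$ is a directed diagram of monos. The device used throughout is that $(\LFF H,\ell)$ is the final lfg coalgebra (\autoref{thm:final}) and every fg-carried coalgebra is lfg, so the map $a^\dagger\colon A\to\LFF H$ appearing in the definition of $\Coalgfg'$ is \emph{the} unique coalgebra homomorphism $(A,a)\to(\LFF H,\ell)$; hence $b^\dagger\cdot f=a^\dagger$ for every coalgebra morphism $f\colon(A,a)\to(B,b)$.

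For (i) and (ii) I would argue: given $f\colon(A,a)\to(B,b)$ in $\Coalgfg'$, from $a^\dagger=b^\dagger\cdot f$ with $a^\dagger$ monic (the defining property of objects of $\Coalgfg'$) it follows that $f$ is monic; and if $g\colon(A,a)\to(B,b)$ is another morphism, then $b^\dagger\cdot f=a^\dagger=b^\dagger\cdot g$ with $b^\dagger$ monic forces $f=g$.

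For (iii): nonemptiness follows from \autoref{lfgdirectedunion}, since $(\LFF H,\ell)$ is lfg, hence a directed union of its fg subcoalgebras, and that diagram is nonempty; any fg subcoalgebra $(P,p)\rightarrowtail(\LFF H,\ell)$ has $p^\dagger$ equal to its inclusion (again by uniqueness), which is monic, so $(P,p)\in\Coalgfg'$. For upper bounds, let $(A,a),(B,b)\in\Coalgfg'$, and form the coproduct coalgebra $(A,a)+(B,b)$ in $\Coalg H$ (the forgetful functor creates colimits, so its carrier is $A+B$, which is finitely generated because fg objects are closed under finite coproducts in an lfp category) together with the homomorphism $[a^\dagger,b^\dagger]\colon(A,a)+(B,b)\to(\LFF H,\ell)$. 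As $H$ preserves monos, this morphism factors in $\Coalg H$ as $(A,a)+(B,b)\twoheadrightarrow(C,c)\rightarrowtail(\LFF H,\ell)$; then $C$ is a strong quotient of $A+B$, hence finitely generated, and the mono part $(C,c)\rightarrowtail(\LFF H,\ell)$ is $c^\dagger$ by uniqueness, so $(C,c)\in\Coalgfg'$. Finally the composites $(A,a)\xrightarrow{\inl}(A,a)+(B,b)\twoheadrightarrow(C,c)$ and $(B,b)\xrightarrow{\inr}(A,a)+(B,b)\twoheadrightarrow(C,c)$ lie in $\Coalgfg'$ (a full subcategory) and exhibit $(C,c)$ as a common upper bound.

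I do not expect a genuine obstacle; the only points deserving a line of care are (a) that the $\Coalg H$-factorization of $[a^\dagger,b^\dagger]$ exists and that its mono part coincides with the canonical $c^\dagger$, both immediate from the lifting of $(\text{strong epi},\text{mono})$-factorizations to $\Coalg H$ (valid since $H$ preserves monos) together with finality of $(\LFF H,\ell)$ among lfg coalgebras; and (b) the bookkeeping fact that finitely generated objects are closed under finite coproducts in an lfp category, so that $A+B$ and hence its strong quotient $C$ are finitely generated.
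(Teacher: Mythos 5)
Your proposal is correct and follows essentially the same route as the paper: mono-carriedness and thinness from $b^\dagger\cdot f=a^\dagger$ with the $\dagger$-morphisms monic, and upper bounds by forming the coproduct in $\Coalgfg H$ (fg objects being closed under finite coproducts) and taking the mono-carried image of the canonical map into $(\LFF H,\ell)$, which is exactly the paper's factorization argument. The only addition is your explicit treatment of nonemptiness, which the paper leaves implicit and which your argument via the fg subcoalgebras of $\LFF H$ handles correctly.
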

\begin{proof}
    At first, let us show that
    \begin{align}
        \text{for }(A,a)\text{ in \Coalgfg} \text{ there exists }(A',a')\text{ in
        $\Coalgfg'$ with }h: (A,a) \to  (A',a').
        \label{cofinal}
    \end{align}
    This follows directly from the (strong epi,mono) factorization which lifts
    from $\C$ to $\Coalgfg$. So
    $a^\dagger: A\to \LFF H$ factors into $h: A\twoheadrightarrow A'$ and
    $a'^\dagger: A' \rightarrowtail \LFF H$. The strong epi $h$ induces the structure
    $a': A' \to HA'$ and proves that both $h$ and $a'^\dagger$ are coalgebra
    homomorphisms. For the existence of upper bounds, which is required by the
    directedness, observe that coproducts exists in $\Coalgfg$, inducing upper
    bounds in $\Coalgfg'$ by \eqref{cofinal}.

    For any homomorphisms $g,h: (A_1,a_1) \to (A_2, a_2)$ we have $a_2^\dagger
    \cdot g = a_1^\dagger = a_2^\dagger\cdot h$. As $a_2^\dagger$ is monic,
    $g=h$, i.e. there is at most one arrow in each hom set of $\Coalgfg'$, which
    means that $U'$ is essentially small, a poset, and thus directed. As
    $a_1^\dagger$ is a mono, $h$ is a mono as well, so $U'$ is a directed
    diagram of monos.
\qed
\end{proof}
\begin{lemma}
    $\LFF H$ is the colimit of $U': \Coalgfg' H \to \C$.
\end{lemma}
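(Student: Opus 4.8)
The plan is to deduce this from \autoref{thm:final} by a cofinality argument, rather than verifying the universal property of the colimit directly. Recall that \autoref{thm:final} exhibits $(\LFF H,\ell)$ as the colimit in $\Coalg H$ of the whole diagram $\Coalgfg H$, the colimit injection at $(A,a)$ being the unique coalgebra morphism $a^\dagger: (A,a)\to(\LFF H,\ell)$; since the forgetful functor $\Coalg H\to\C$ creates colimits, $\LFF H$ is then also the colimit in $\C$ of the forgetful diagram on $\Coalgfg H$, with the same injections $a^\dagger$. So it suffices to show that the full inclusion $J: \Coalgfg' H\hookrightarrow\Coalgfg H$ is cofinal, since precomposing a diagram with a cofinal functor leaves its colimit unchanged, and $U'$ is precisely the forgetful diagram on $\Coalgfg H$ precomposed with $J$.

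To prove cofinality I would fix $(A,a)\in\Coalgfg H$ and verify that the comma category $(A,a)\downarrow J$ — whose objects are pairs consisting of an $(A',a')\in\Coalgfg' H$ together with a coalgebra morphism $(A,a)\to(A',a')$ — is nonempty and connected. Nonemptiness is essentially the observation already made in the proof of the preceding lemma: the (strong epi, mono)-factorization of $a^\dagger$ lifts from $\C$ to $\Coalg H$ because $H$ preserves monos, giving $(A,a)\twoheadrightarrow(A',a')\rightarrowtail(\LFF H,\ell)$ with $(A',a')$ in $\Coalgfg H$ (strong quotients of fg objects are fg) and with the monic leg equal to $(a')^\dagger$, so $(A',a')\in\Coalgfg'$. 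For connectedness, given two objects $h_i: (A,a)\to(A_i',a_i')$, $i=1,2$, of this comma category, I would use that $\Coalgfg' H$ is directed (established in the preceding lemma) to pick an upper bound $(A_3',a_3')$ with morphisms $g_i: (A_i',a_i')\to(A_3',a_3')$; then $g_1\cdot h_1$ and $g_2\cdot h_2$ are two coalgebra morphisms $(A,a)\to(A_3',a_3')$ that agree after postcomposition with the monomorphism $(a_3')^\dagger$ (both composites equal $a^\dagger$ by finality of $\LFF H$), hence coincide, so $g_1$ and $g_2$ connect the two chosen objects to a common one.

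Granting cofinality of $J$, the colimit of $U'$ equals the colimit of the forgetful diagram on $\Coalgfg H$, namely $\LFF H$, which is the assertion. The one place that needs genuine care is the connectedness step: $\Coalgfg H$ is not a poset, so there may be several coalgebra morphisms between two of its objects, but this is exactly where membership in $\Coalgfg'$ is used, since it makes $(a_3')^\dagger$ left-cancellable. A heavier alternative would be to construct the colimiting cocone on $U'$ by hand — extending an arbitrary cocone from $\Coalgfg' H$ to all of $\Coalgfg H$ along the epi legs above (well-definedness again reducing to cancellation by the monos $(a')^\dagger$), invoking \autoref{thm:final}, and deriving joint epimorphy of the injections $a^\dagger$ for $(A,a)\in\Coalgfg'$ from \autoref{lfgdirectedunion} — but the cofinality route is shorter and I would take it.
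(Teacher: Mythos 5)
Your proof is correct and follows essentially the same route as the paper: there, too, the statement is deduced from the fact that $\LFF H$ is the colimit of the forgetful diagram on $\Coalgfg H$ together with cofinality of the inclusion $\Coalgfg' H \hookrightarrow \Coalgfg H$, the latter inferred from exactly the factorization property you use for nonemptiness of the comma categories. Your explicit check of connectedness (via directedness of $\Coalgfg' H$ and cancellation by the monomorphisms $(a')^\dagger$) merely spells out a step the paper leaves implicit.
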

\begin{proof}
    As \eqref{cofinal} proves, the inclusion functor $V: \Coalgfg' H \to \Coalgfg H$
    is a cofinal subdiagram. $\LFF H$ is the colimit of the forgetful functor $U:
    \Coalgfg H \to \C$, so $\colim U = \colim UV = \colim U'$.
\qed
\end{proof}

\subsection*{Proof of \autoref{LFFisIterative}}
\begin{proof}
  Let $e: X \to HX + \LFF H$ be an equation morphism with $X$ fg. In the
  following we prove that $e$ has a unique solution in $\LFF H$. The codomain
  $HX + \LFF H$ is the colimit of the following directed diagram of monos:
  \begin{itemize}
  \item The diagram scheme $\mathcal{D}$ is the product category containing
    pairs $(T \overset{t}\rightarrowtail HX, V \overset{v}{\rightarrow} HV)$
    consisting of an fg subobject of $HX$ and $(V,v) \in \Coalgfg' H$.
    $\mathcal{D}$ is directed, because both the fg subobjects of $HX$ and
    $\Coalgfg' H$ are.
    \vspace{1mm} % the fg subscript was too close to the \mathcal{D} of the
                   % next line

  \item The diagram $D: \mathcal{D}\to \C$ is defined by
    \[
      D(t,v) = \Im(t+ v^\dagger: T+V \to HX+\LFF H )
    \]
    on objects and by diagonalizaton on morphisms. By mono laws, all connecting
    morphisms are monic.

  \end{itemize}
  That $HX+\LFF H$ is indeed the colimit of $D$ follows from
  \autoref{unionsofimages} applied with $f=\id$. Because $X$ is fg, the morphism
  $e$ factors through one of the colimit injections, i.e. we obtain an $m:
  W\rightarrowtail HX+\LFF H$, $W$ fg, and $e$ such that $m\cdot e' = e$.
  Furthermore, choose some $t: T\rightarrowtail HX$ and $v: V\rightarrow HV$
  from $\mathcal{D}$ such that $W = D(t,v)$ as shown in the diagram below:
  \[
    \begin{tikzcd}
      X
      \arrow{r}{e}
      \arrow[dashed]{dr}[swap]{e'}
      & HX + \LFF H
      \\
      & W
      \arrow[swap,>->]{u}{m}
      \\
      & T + V
      \arrow[swap,->>]{u}{[e_T,e_V]}
      \arrow[shiftarr={xshift=12mm}]{uu}[swap]{t+v^\dagger}
    \end{tikzcd}
  \]
  Since $T+V$ is fg, so is its strong quotient $W$.
  The intermediate object $W$ carries a coalgebra structure by diagonalization:
  \[
    \begin{tikzcd}[column sep=0mm]
      &HX + \LFF H
      \arrow{r}{[He,H\inr\cdot \lff]}
      &[18mm] H(HX+\LFF H)
      \\
      W
      \arrow[>->]{ur}{m}
      &&& HW
      \arrow[>->,swap]{ul}{Hm}
      \\
      & T+V
      \arrow[->>]{ul}{[e_T,e_V]}
      \arrow{r}[swap]{t+v}
      \arrow{uu}[swap]{t+v^\dagger}
      & HX+HV
      \arrow{uu}[sloped,above]{[He,H\inr\cdot Hv^\dagger]}
      \arrow[swap]{ur}{[He',He_V]}
    \end{tikzcd}
  \]
  The inner square commutes on the left component trivially, and on the right
  component because $v^\dagger$ is a $H$-coalgebra homomorphism, and the two
  triangles by the previous diagram. This induces a
  morphism $w: W\to HW$ making $m$ and $e_V$ coalgebra homomorphisms. Since $m$
  is independent of the choice of $t$ and $v$ and since $Hm$
  is monic, $w$ is independent of the choice of $t$ and $v$.
  We have the following commuting diagram:
  \[
    \begin{tikzcd}[column sep=13mm, row sep=9mm]
      W
      \arrow[>->]{r}{m}
      & HX+\LFF H
      \arrow{r}{He'+\ell}
      \descto{d}{\begin{array}{c}v^\dagger~\text{coalgebra}\\[-1mm]\text{homomorphism}\end{array}}
      \descto[pos=0.7,xshift=5mm]{dr}{\begin{array}{c}
                             \text{finality}\\[-1mm]
                             \text{of~}\LFF H\end{array}}
      & |[xshift=10mm]| HW+H\LFF H
      \arrow[to path={
        ([xshift=-4mm]\tikztostart.south east) |- (\tikztotarget) \tikztonodes
      }]{dd}[]{[Hw^\dagger\!,\,H\LFF H]}
      \\[4mm]
      T+V
      \arrow[->>]{u}{[e_T,e_V]}
      \arrow[->>]{d}[swap]{[e_T,e_V]}
      \arrow{ur}[sloped,above]{t+v^\dagger}
      \arrow{r}{t+v}
      \descto{dr}{\text{Definition~of~}w}
      & HX+HV
      \arrow[bend left=8]{ur}[sloped,above]{He'+Hv^\dagger}
      \arrow{r}{He'+He_V}
      \arrow{d}[]{[He',He_V]}
      & HW+ HW
      \arrow{u}[sloped,above]{HW+Hw^\dagger}
      \arrow{d}[]{[Hw^\dagger,Hw^\dagger]}
      \\
      W
      \arrow{r}{w}
      \arrow{dr}[swap]{w^\dagger}
      & |[yshift=8mm]| HW
      \arrow{r}{H w^\dagger}
      \descto{d}{\begin{array}{c}w^\dagger~\text{coalgebra}\\[-1mm]\text{homomorphism}\end{array}}
      & |[yshift=8mm]| H\LFF H
      \arrow[bend left=10]{dl}{\ell^{-1}}
      \\[-6mm]
      & \LFF H
    \end{tikzcd}
  \]
  Since $[e_T,e_V]$ is an epimorphism, we therefore have
  \[
    w^\dagger
    = \ell^{-1}\cdot [Hw^\dagger \cdot He', \ell ]\cdot m
    = [\ell^{-1}\cdot Hw^\dagger \cdot He', \id_{\LFF H}]\cdot m
  \]
  and so $w^\dagger \cdot e'$ is a solution of $e$ in $(\LFF H, \ell^{-1})$:
  \[
    \begin{tikzcd}[column sep = 14mm,row sep=10mm]
      X
      \arrow{r}{e'}
      \arrow{d}[swap]{e}
      & W
      \arrow{dl}[swap]{m}
      \arrow{r}{w^\dagger}
      & \LFF H
      \\
      HX + \LFF H
      \arrow{r}{He' + \LFF H}
      & HW + \LFF H
      \arrow{r}{Hw^\dagger + \LFF H}
      & H\LFF H + \LFF H
      \arrow{u}[swap]{[\ell^{-1},\LFF H]}
    \end{tikzcd}
  \]
  To verify that this solution is unique, let $s: X\to \LFF H$ be any solution
  of $e$, i.e.~we have
  \begin{align}
    s = [\ell^{-1}\cdot Hs, \id_{\LFF H}]\cdot e.
    \label{anyOtherEquationSolution}
  \end{align}
  This defines a coalgebra homomorphism from $(W,w)$ to $\LFF H$:
  \[
    \begin{tikzcd}[column sep = 15mm,row sep= 14mm]
      W
      \arrow{r}{m}
      \arrow{d}[swap]{w}
      \descto[xshift=-5mm]{dr}{\begin{array}{c}
                    \text{Definition} \\[-1mm]
                    \text{of }w
                    \end{array}}
      & HX + \LFF H
      \arrow{r}{[\ell^{-1}\cdot Hs,\LFF H]}
      \arrow{d}[swap]{[He,H\inr\cdot \ell]}
      \arrow[xshift=3mm]{dr}[sloped,above]{[Hs,\ell]}
      \descto[xshift=-13mm]{dr}{\text{\eqref{anyOtherEquationSolution}}}
      &[14mm] \LFF H
      \arrow{d}{\ell}
      \\
      HW
      \arrow{r}{Hm}
      & H(HX+\LFF H)
      \arrow{r}{H[\ell^{-1}\cdot Hs,\LFF H]}
      & H \LFF H
    \end{tikzcd}
  \]
  Hence $[\ell^{-1}\cdot Hs,\id_{\LFF H}]\cdot m = w^\dagger$ and so
  \[
    w^\dagger \cdot e'
    = [\ell^{-1}\cdot Hs,\id_{\LFF H}]\cdot m \cdot e'
    = [\ell^{-1}\cdot Hs,\id_{\LFF H}]\cdot e,
    = s
  \]
  which completes the proof.
  \qed
\end{proof}

\subsection*{Technical Lemma for \autoref{alllfginitial}}
\begin{lemma}\label{fgiterativeunique}
    For an \fgiterative algebra $(A,\alpha: HA\to A)$ and a coalgebra $e: X\to HX$
    from $\Coalgfg$ there is a unique $\C$-morphism $u_e: X\to A$ such that $u_e
    = \alpha \cdot Hu_e \cdot e$.
    \[
        \begin{tikzcd}
            X \arrow[dashed]{r}{\exists! u_e} \arrow{d}[left]{e}
            & A \\
            HX \arrow{r}{Hu_e}
            \descto{ur}{\circlearrowleft}
            & HA \arrow{u}{\alpha}
        \end{tikzcd}
    \]
\end{lemma}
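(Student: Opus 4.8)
The plan is to reduce the claimed coalgebra-to-algebra recursion principle to the defining property of \fgiterative algebras. First I would turn the coalgebra $e : X \to HX$ into a flat equation morphism in $A$ by postcomposing with the left coproduct injection, i.e.\ I set $\bar e := \inl \cdot e : X \to HX + A$. Since $(X,e)$ lies in $\Coalgfg H$, the object $X$ is finitely generated, so $\bar e$ is an equation morphism in $A$ in the sense of the definition above. As $(A,\alpha)$ is \fgiterative, $\bar e$ therefore has a unique solution $\bar e^\dagger : X \to A$, characterised by $\bar e^\dagger = [\alpha,\id_A] \cdot (H\bar e^\dagger + \id_A) \cdot \bar e$.

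The next step is purely coproduct bookkeeping: using $(H\bar e^\dagger + \id_A) \cdot \inl = \inl \cdot H\bar e^\dagger$ and $[\alpha,\id_A] \cdot \inl = \alpha$ one rewrites the characterising equation as $\bar e^\dagger = \alpha \cdot H\bar e^\dagger \cdot e$. Hence $u_e := \bar e^\dagger$ is a morphism of the required form, which settles existence.

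For uniqueness, suppose $u : X \to A$ is any morphism satisfying $u = \alpha \cdot Hu \cdot e$. I would check that $u$ is then a solution of $\bar e$: indeed $[\alpha,\id_A] \cdot (Hu + \id_A) \cdot \bar e = [\alpha,\id_A] \cdot (Hu + \id_A) \cdot \inl \cdot e = [\alpha,\id_A] \cdot \inl \cdot Hu \cdot e = \alpha \cdot Hu \cdot e = u$. By the uniqueness clause in the definition of \fgiterative algebra it follows that $u = \bar e^\dagger = u_e$.

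There is no genuine obstacle in this argument; it is essentially a translation between the two formulations of finitary (co)recursion, and the only point that needs a word of care is that membership of $(X,e)$ in $\Coalgfg H$ is exactly what licenses calling $\bar e$ an equation morphism. Note that this lemma is precisely the special case of \autoref{alllfginitial} in which the lfg coalgebra has a finitely generated carrier; it will serve as the base case from which the general statement (for an arbitrary lfg coalgebra, written by \autoref{lfgdirectedunion} as a directed union of its fg subcoalgebras, with $u_e$ obtained by gluing the morphisms on the subcoalgebras) is derived.
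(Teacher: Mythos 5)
Your proof is correct and follows exactly the paper's argument: both reduce the statement to the defining property of \fgiterative algebras by considering the equation morphism $\inl\cdot e: X\to HX+A$ and observing that its solutions are precisely the morphisms $u$ with $u = \alpha\cdot Hu\cdot e$. The coproduct computations and the role of $X$ being finitely generated are handled correctly, so nothing is missing.
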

\begin{proof}
    Consider the equation morphism $\inl\cdot e: X\to HX+A$. For an arbitrary
    morphism $s: X\to A$, consider the following diagram:
    \[
        \begin{tikzcd}[column sep=1.6cm]
            X \arrow{rr}{s} \arrow{d}{e}
            &
            & A
            \\
            HX \arrow{r}{\inl}
            \arrow[ to path = |- (\tikztotarget) \tikztonodes
            ]{drr}[pos=0.75,below]{Hs}
            & HX+A \arrow{r}{Hs + A}
            &
            HA + A \arrow{u}[right]{[\alpha,A]}
            \\
            &{} \descto{u}{\circlearrowleft} &
            HA \arrow{u}[right]{\inl}
            \arrow[shiftarr={xshift=9ex}]{uu}[right]{\alpha}[left]{\circlearrowleft\ }
        \end{tikzcd}
    \]
    The lower part and the right-hand part always commute. But for the
    commutativity of the whole diagram consider the following sequence of
    equivalences:
    \begin{itemize}
        \item[] $s$ is a solution of $\inl\cdot e$ in $A$.
        \item[$\Leftrightarrow$] The upper square commutes.
        \item[$\Leftrightarrow$] $s = [\alpha,\id_A]\cdot \inl\cdot Hs\cdot e$
        \item[$\Leftrightarrow$] $s = \alpha\cdot Hs\cdot e$
    \end{itemize}
    So by the existence and the uniqueness of a solution of $\inl\cdot e$ in the
    \fgiterative algebra $A$, we get the desired morphism $u_e: X\to A$ with
    $u_e = \alpha \cdot Hu_e \cdot e$ and its uniqueness, by reading the
    equivalences from top or from bottom respectively.
    \qed
\end{proof}
\subsection*{Proof of \autoref{alllfginitial}}
\begin{proof}
    By \autoref{lfgdirectedunion}, $e: X\to HX$ is the union of the diagram $D$
    of its subcoalgebras $s: S\to HS$ with $S$ finitely
    generated. Denote the corresponding colimit injections by ${\inj_s: (S,s)\to
    (X,e)}$. Each such $s$ induces a unique morphism $u_s: S\to A$ with 
    \begin{equation}
    u_s = \alpha\cdot Hu_s\cdot s.
    \label{usproperty}
    \end{equation}
    For any coalgebra homomorphism $h: (R,r) \to (S,s)$ in $\Coalgfg$ the
    diagram
    \[
        \begin{tikzcd}
            R \arrow{r}{h} \arrow{d}[left]{r}
            & S \arrow{r}{u_s} \arrow{d}[left]{s} & A
            \\
            HR \arrow{r}[below]{Hh}
            & HS \arrow{r}[below]{Hu_s}
            & HA \arrow{u}[right]{\alpha}
        \end{tikzcd}
    \]
    commutes, because $h$ is a coalgebra homomorphism and because of the
    property of $u_s$. So $u_r = u_s\cdot h$. In other words, $A$ together with
    the morphisms $(u_s: S\to A)_{s: S\to HS\text{ lfg}}$ form a cocone for $D$
    in $\C$. This induces a unique morphism $u_e: X\to A$.

    For each $s: S\to HS$, $\inj_s: S\to X$ is a coalgebra homomorphism.
    Furthermore, we have is $u_s = u_e\cdot \inj_s$ in $\C$ by the universal property
    of $X$. So every part except possibly (ii) of the diagram
    \[
        \begin{tikzcd}
            S \arrow{r}{\inj_s} \arrow{d}[left]{s}
            \arrow[shiftarr={yshift=4ex}]{rr}{u_s}[below]{\circlearrowleft}
            \descto{dr}{\text{(i)}\circlearrowleft}
            & X \arrow{d}[left]{e} \arrow{r}{u_e}
            \descto{dr}{\text{(ii)}}
            & A
            \\
            HS \arrow{r}[below]{H\inj_s}
            \arrow[shiftarr={yshift=-4ex}]{rr}[below]{Hu_s}[above]{\circlearrowleft}
            & HX \arrow{r}[below]{Hu_e}
            & HA \arrow{u}[right]{\alpha}
        \end{tikzcd}
    \]
    commutes, as indicated. In particular the outer square square commutes which
    gives
    \[
    \alpha\cdot Hu_e\cdot e \cdot \inj_s = u_e\cdot \inj_s
    \text{ for every fg~subcoalgebra $(S,s)$ of $(X,e)$}.
    \]
    As the colimit injections $\inj_s$ are jointly epic, (ii) commutes.

    Conversely every $\C$-morphism $\tilde u_e: X\to A$ making (ii) commute, makes the
    bigger square (i)$+$(ii) commute and defines a family of morphisms $\tilde
    u_e\cdot \inj_s: S\to A$ having the property \eqref{usproperty} each.
    So by the uniqueness of the $u_s: S\to A$, we get $u_s = \tilde u_e\cdot
    \inj_s$. Using again that the $\inj_s$ are jointly epic, reduces the
    equation
    \[
        u_e\cdot \inj_s = u_s = \tilde u_e\cdot \inj_s
    \]
    to the desired uniqueness of $u_e$, namely $u_e = \tilde u_e$.
    \qed
\end{proof}

\subsection*{Proof of \autoref{lfpquotient}}
\begin{proof}
    Take a coalgebra $(X,x)$ with finitely generated carrier, which is the
    strong quotient of some fp~object $X'$ via $q: X' \twoheadrightarrow X$.
    By assumption, $X'$ is the strong quotient of a projective fp~object $X''$
    via $q': X'' \to X'$. As $H$ preserves strong epis, the projectivity of
    $X''$ induces the coalgebra structure $x''$:
    \[
        \begin{tikzcd}
            X'' \arrow[dashed]{r}{x''} \arrow[->>]{d}[left]{q'} &
            HX''\arrow[->>]{d}[right]{Hq'}
            \\
            X' \arrow[->>]{d}[left]{q} &
            HX' \arrow[->>]{d}[right]{Hq}
            \\
            X \arrow{r}{x} & HX
        \end{tikzcd}
    \]
    \qed
\end{proof}

\subsection*{Proof of \autoref{firstLFFImage}}
\begin{proof}
    First of all, $(\LFF H^T, \ell)$ is final for all $(TX,x^\sharp)$, with $X$
    finite, so it is a competing cocone for $(K,k)$:
    \[
        \begin{tikzcd}[ampersand replacement=\&]
        (TX,x^\sharp)
        \arrow{r}[above]{\inj_X}
        \arrow{dr}[below left]{Ux^{\sharp\dagger}}
        \&
        (K,k)
        \arrow[dashed]{d}[right]{w}
        \arrow[->>]{r}[above]{e}
        \arrow[to path={
                      -- ([yshift=2mm]\tikztostart.north)
                      -| ([xshift=4mm]\tikztotarget.east) \tikztonodes
                      -- (\tikztotarget.east)}
              ]{dr}[right,pos=0.65]{k^\dagger}
        \&
        (I,i)
        \arrow[>->]{d}[right]{m}
        \\
        {}
        \&
        (U\LFF H^T, U\ell)
        \arrow[>->]{r}{n}
        \&
        (\nu H, \tau)
        \end{tikzcd}
    \]
    Hence, $w$ is induced making the triangle commute. Any $(G,g)$ in $\Coalgfg
    H^T$ is the quotient of some $(TX,x^\sharp)$. And on the other hand, the
    $g^\dagger: (G,g) \to (\LFF H^T, \ell)$ are jointly epic. Hence, the
    $x^{\sharp\dagger}$ are jointly epic as well, and so the
    $Ux^{\sharp\dagger}$, too. Hence also $w$ is epic, and -- as we are in \Set\ --
    even a strong epimorphism. In other words, $(U\LFF H^T,U\ell)$ is the
    (unique) image of $(K,k)$ in $(\nu H,\tau)$.
    \qed
\end{proof}

\subsection*{Proof of \autoref{prop:LFFunion}}
\begin{proof}
    Combining the previous \autoref{firstLFFImage} together with the
    \autoref{unionsofimages} proves the first equality. For the second equality,
    consider any element $t \in TX$ and define a new coalgebra on $X+1$ by
    \[
    (Y,y) \equiv\big(
    \begin{tikzcd}[ampersand replacement = \&, column sep = 1.5cm]
        X+1 \rar{[x, x^\sharp(t)]}
        \& HTX \rar{HT\inl}
        \& HT(X+1)
    \end{tikzcd}
    \big).
    \]
    Clearly, $[\id_{TX}, t]: Y\to X$ is a $H^T$-coalgebra homomorphism, and
    $t \in y^{\sharp\dagger}\cdot\eta_Y^T[Y]$.
    \qed
\end{proof}

\subsection*{Definition of the Lifting of $S×(-)^\Sigma$ to $S$-algebras}
The $\Poly{-+\Sigma}$-algebra structure -- $S$-module structure,
monoid structure, $\Sigma$-pointing -- on $S×A^\Sigma$ can be defined using the
$\Poly{-+\Sigma}$-structure on $A$ as follows:
\[
\begin{nicearray}{l@{\hskip 3mm}l@{\hskip 3mm}l@{\hskip 3mm}l}
        \text{Structure} & \text{Connective} & \text{in }S &\text{in }A^\Sigma\\
        \spmidrule
        \text{$S$-Module}
        & 0 & 0_S& a\mapsto 0_A \\
        & (o_1,\delta_1) + (o_2, \delta_2)
        & o_1 + o_2
        & a\mapsto \delta_1(a) + \delta_2(a)
        \\
        & s\cdot (o_1,\delta_1)
        & s\cdot o_1
        & a\mapsto s\cdot \delta_1(a)
        \\
        \spmidrule
        \text{Monoid}
        & 1 & 1_S& a\mapsto 0_A \\
        & (o_1,\delta_1) * (o_2, \delta_2)
        & o_1 \cdot o_2
        & a\mapsto
            \delta_1(a)\cdot \fuse{o_2,\delta_2}
            + i(o_1)\cdot \delta_2(a) \\
        \spmidrule
        \text{$\Sigma$-pointing} & b\in \Sigma & 0_S& b\mapsto 1_A,\quad a\mapsto 0_A, b\neq a
\end{nicearray}
\]
The defined connectives only makes use of connectives from $S$ (seen as a
$S$-algebra) and from the $S$-algebra $A$, so $H$ maps any
$\Poly{-+\Sigma}$-algebra homomorphism $h: A\to B$ to again a homomorphism $Hh:
S×A^\Sigma\to S×B^\Sigma$. In total, we have a lifting $H^T:
\Set^T\to\Set^T$ of $H$, as soon as we have checked the $S$-algebra axioms for
$HA = S \times A^\Sigma$.
\subsection*{$S$-algebra connective preserving $[-]: S×A^\Sigma \to A$}
In order to show that $S×A^\Sigma$ is indeed an $S$-algebra, it comes handy to
establish some identities for $[-]: S×A^\Sigma \to A$ first, namely, that it
preserves the proposed $S$-algebra structure in the expected manner.
It preserves the $S$-module connectives zero
\begin{align*}
    [0_S, a\mapsto 0_A]
    = i(0_S) + \sum_{b\in\Sigma} j(b)\cdot 0_A
    = 0_A
\end{align*}
addition,
\begin{align*}
    &
    [o_1+o_2, a\mapsto \delta_1(a) +\delta_2(a)]
    \\ =\ &
    i(o_1+o_2) + \sum_{b\in \Sigma} \big(j(b)\cdot (\delta_1(a) + \delta_2(a))\big)
    \\ =\ &
    i(o_1)+i(o_2) + \sum_{b\in \Sigma} \big(j(b)\cdot \delta_1(a)\big)
    + \sum_{b\in \Sigma}\big(j(b)\cdot \delta_2(a))\big)
    \\ =\ &
    [o_1, \delta_1] + [o_2, \delta_2]
\end{align*}
and scalar multiplication:
\begin{align*}
    [s\cdot o,a\mapsto s\cdot \delta(a)]
    &=
    i(s\cdot o) + \sum_{b\in \Sigma}\big(j(b)\cdot (s\cdot \delta(b))\big)
    =
    s\cdot i(o) + \sum_{b\in \Sigma}s\cdot \big(j(b)\cdot \delta(b)\big)
    \\
    &=
    s\cdot \left(i(o) + \sum_{b\in \Sigma}\big(j(b)\cdot \delta(b)\big)\right)
    =
    s\cdot [o,\delta]
\end{align*}
The monoid connectives are preserved as well:
\[
    [1_S, a\mapsto 0_A]
    = i(1_S) +\sum_{b\in\Sigma}j(b)\cdot 0_A = i(1_S) = 1_A
\]
\begin{align*}
    [o_1,\delta_1]\cdot [o_2,\delta_2]
    & = \left(i(o_1) + \sum_{b\in \Sigma} j(b)\cdot \delta_1(b)\right)\cdot [o_2,\delta_2]
    \\ &
    = i(o_1)\cdot [o_2,\delta_2] + \sum_{b\in\Sigma}
    j(b)\cdot\delta_1(b)\cdot [o_2,\delta_2]
    \\ &
    = i(o_1)\cdot \left(i(o_2)+\sum_{b\in\Sigma}j(b)\cdot \delta_2(b)\right)
    + \sum_{b\in\Sigma} j(b)\cdot\delta_1(b)\cdot [o_2,\delta_2]
    \\ &
    = i(o_1\cdot o_2)+\sum_{b\in\Sigma}j(b)\cdot i(o_1)\cdot \delta_2(b)
    + \sum_{b\in\Sigma} j(b)\cdot\delta_1(b)\cdot [o_2,\delta_2]
    \\ &
    = i(o_1\cdot o_2)+\sum_{b\in\Sigma}j(b)\cdot \big(i(o_1)\cdot \delta_2(b)
    + \delta_1(b)\cdot [o_2,\delta_2]\big)
    \\ &
    = \big[
        o_1\cdot o_2, a\mapsto i(o_1)\cdot\delta_2(a)+\delta_1(a)\cdot[o_2,\delta_2]
    \big]
    = \big[(o_1,\delta_1)* (o_2,\delta_2)\big]
\end{align*}

\subsection*{S-Algebra axioms}
Firstly, note that $(HA,0,+,\cdot)$ fulfills all the $S$-Module axioms, because
$0,+,\cdot$ are defined point-wise in $A$. Secondly, $(HA,1,*)$ is a monoid:
\begin{align*}
    (1_S,a\mapsto 0_A)* (o,\delta)
    &= \big(1_S\cdot o, a\mapsto 0_A\cdot [o,\delta] + i(1_S)\cdot \delta(a)\big)
    = \big(o, a\mapsto \delta(a)\big)
\\
    (o,\delta)* (1_S,a\mapsto 0_A)
    &= \big(o\cdot 1_S, a\mapsto \delta(a)\cdot [1_S,a\mapsto 0_A] + i(o)\cdot 0_A\big)
    \\
    &=  \big(o,  a\mapsto \delta(a)\cdot 1_A + 0_A\big)
    =  (o,\delta)
\end{align*}
\begin{align*}
    &\big((o_1,\delta_1)* (o_2,\delta_2)\big)*(o_3,\delta_3)
    \\ =\ &
    \big(o_1\cdot o_2,
        a\mapsto \delta_1(a)\cdot [o_2,\delta_2]+ i(o_1)\cdot \delta_2(a)\big)
            * (o_3,\delta_3)
    \\ =\ &
    \big(o_1\cdot o_2\cdot o_3, a\mapsto
        \big(\delta_1(a)\cdot [o_2,\delta_2]
            + i(o_1)\cdot \delta_2(a)\big)
             \cdot [o_3,\delta_3]
          + i(o_1\cdot o_2)\cdot \delta_3(a)\big)
    \\ =\ &
    \big(o_1\cdot o_2\cdot o_3, a\mapsto
        \delta_1(a)\cdot [o_2,\delta_2]\cdot [o_3,\delta_3]
            + i(o_1)\cdot \delta_2(a)\cdot [o_3,\delta_3]
          + i(o_1\cdot o_2)\cdot \delta_3(a)\big)
    \\ =\ &
    \big(o_1\cdot o_2\cdot o_3, a\mapsto
        \delta_1(a)\cdot \big[(o_2,\delta_2)*(o_3,\delta_3)\big]
            + i(o_1)\cdot \big(\delta_2(a)\cdot [o_3,\delta_3]
          + i(o_2)\cdot \delta_3(a)\big)\big)
    \\ =\ &
    (o_1,\delta_1)* \big(o_2\cdot o_3, a\mapsto \delta_2(a)\cdot [o_3,\delta_3] +
    i(o_2)\cdot \delta_3(a)\big)
    \\ =\ &
    (o_1,\delta_1)* \big((o_2,\delta_2)*(o_3,\delta_3)\big)
\end{align*}
What remains is the bilinearity of $*$ with respect to the
$S$-Module structure. For bilinearity of $*$ in the first argument, we use the
very same properties in $A$:
\begin{align*}
    &
    \big((o_1,\delta_1) + (o_2,\delta_2)\big) * (o_3,\delta_3)
    \\ =\ &
    (o_1+o_2, a\mapsto \delta_1(a) + \delta_2(a)) * (o_3,\delta_3)
    \\ =\ &
    \big((o_1+o_2)\cdot o_3, a\mapsto
        (\delta_1(a)+\delta_2(a))\cdot [o_3,\delta_3] + i(o_1+o_2)\cdot
        \delta_3(a)\big)
    \\ =\ &
    \big(o_1\cdot o_3 +o_2\cdot o_3, a\mapsto
        \delta_1(a)\cdot [o_3,\delta_3]+\delta_2(a)\cdot [o_3,\delta_3] +
        (i(o_1)+i(o_2))\cdot \delta_3(a)\big)
    \\ =\ &
    (o_1,\delta_1) * (o_3,\delta_3) + (o_2,\delta_2) * (o_3,\delta_3)
\end{align*}
\begin{align*}
    \big(s\cdot (o_1,\delta_1)\big) * (o_2,\delta_2)
    &= \big(s\cdot o_1 \cdot o_2,
        a\mapsto s\cdot \delta_1(a)\cdot \fuse{o_2,\delta_2}
            + i(s\cdot o_1)\cdot \delta_2(a)\big)
    \\
    &= \big(s\cdot o_1 \cdot o_2,
        a\mapsto s\cdot (\delta_1(a)\cdot \fuse{o_2,\delta_2}
            + i(o_1)\cdot \delta_2(a))\big)
    \\
    &= s\cdot \big((o_1,\delta_1)* (o_2,\delta_2)\big)
\end{align*}
\begin{align*}
    (0_S, a\mapsto 0_A)*(o,\delta)
    &= \big(0_S\cdot o, a\mapsto 0_A\cdot [o,\delta] + i(0_S)\cdot \delta(a)\big)
    \\
    &= \big(0_S\cdot o, a\mapsto 0_A\cdot [o,\delta] + 0_A\cdot \delta(a)\big)
    = \big(0_S, a\mapsto 0_A\big)
\end{align*}
%The preservation of $0$ and scalar multiplication are analogous.
Finally, linearity in the second argument of $*$ using the identities for $[-]$:
\begin{align*}
    &
    (o_1,\delta_1) * \big((o_2,\delta_2) + (o_3,\delta_3)\big)
    \\ =\ &
    (o_1\cdot (o_2+o_3), a\mapsto \delta_1(a)\cdot [o_2+o_3, a\mapsto
    \delta_2(a)+\delta_3(a)]+ i(o_1)\cdot (\delta_2(a) + \delta_3(a))
    \\ =\ &
    (o_1\cdot o_2+ o_1\cdot o_3, a\mapsto \delta_1(a)\cdot [o_2,\delta_2]
    + \delta_1(a)\cdot [o_3,\delta_3]
    + i(o_1)\cdot \delta_2(a)
    + i(o_1)\cdot  \delta_3(a)
    \\ =\ &
    (o_1,\delta_1) * (o_2,\delta_2) +(o_1,\delta_1) * (o_3,\delta_3)
\end{align*}
\begin{align*}
    &
    (o_1,\delta_1) * \big(s\cdot (o_2,\delta_2)\big)
    = (o_1,\delta_1) * (s\cdot o_2,a\mapsto s\cdot \delta_2)\big)
    \\ =\ &
    \big(o_1\cdot (s\cdot o_2), \delta_1(a)\cdot [s\cdot o_2,a\mapsto s\cdot
    \delta_2(a)] + i(o_1)\cdot (s\cdot \delta_2(a))\big)
    \\ =\ &
    \big(o_1\cdot (s\cdot o_2), \delta_1(a)\cdot (s\cdot [o_2,\delta_2]) +
    i(o_1)\cdot (s\cdot \delta_2(a))\big)
    \\ =\ &
    \big(s\cdot (o_1\cdot o_2), s\cdot (\delta_1(a)\cdot [o_2,\delta_2]) +
    s\cdot (i(o_1)\cdot \delta_2(a))\big)
    = s\cdot \big((o_1,\delta_1) * (o_2,\delta_2)\big)
\end{align*}
\begin{align*}
    (o,\delta) * (0_S, a\mapsto 0_A)
    &= \big(o\cdot 0_S,a\mapsto  \delta(a) \cdot [0_S, a\mapsto 0_A] + i(o)\cdot 0_A\big)
    \\ &=
    \big(o\cdot 0_S, a\mapsto \delta(a) \cdot 0_A + 0_A\big)
    = (0_S,a\mapsto 0_A)
\end{align*}
So for any $S$-algebra $A$, $S×A^\Sigma$ is an $S$-algebras too and hence $[-]:
S×A^\Sigma \to A$ an  $S$-algebra homomorphism.

\subsection*{Proof of \autoref{bisimilarAlgebra}}
\begin{proof}
    In other words, let us prove that $R = \{(\fuse{c(w)},w)\mid w \in A\}$ is a
    bisimulation. First, take $c = \fpair{o,\delta}$ in \Set (not in $\Set^T$)
    and note that the following holds for any $b\in \Sigma$ and $v \in A$ (where $\varrho_b: \Sigma \to A$ with $\varrho_b(b) = 1$ and $\varrho_b(a) =0$ for $a \neq b$):
    \begin{align*}
        c(j(b)) * c(v)
        &= (0_S,\varrho_b) * c(v)
        = (0_S,\varrho_b) * \big(o(v), \delta(v)\big)
        \\ &
        = \big(0_S\cdot o(v),
            a\mapsto \varrho_b(a)\cdot \fuse{c(v)}
                     + i(0_S)\cdot \delta(v)(a) \big)
        \\ &
        = \big(0_S, a\mapsto \varrho_b(a) \cdot \fuse{c(v)}\big)
    \end{align*}
    The following shows that $R$ is a bisimulation:
    \begin{align*}
        c(\fuse{c(w)})
        &= c(\fuse{o(w), \delta(w)})
        = c\left(i(o(w)) + \sum_{b\in \Sigma} \big(j(b)\cdot \delta(w)(b)\big)\right)
        \\ &
        = c\big(i(\underbrace{o(w)}_{\in S})\big) + \sum_{b\in \Sigma} c\big(j(b)\big)* c\big(\delta(w)(b)\big)
        \\ &
        = (o(w), a\mapsto 0_A) +
          \sum_{b\in \Sigma} \big(0_S, a\mapsto \varrho_b(a) \cdot \fuse{c(\delta(w)(b))}\big)
        \\ &
        = (o(w), a\mapsto 0_A) +
          \left(0_S, a\mapsto \sum_{b\in \Sigma} \varrho_b(a) \cdot
          \fuse{c(\delta(w)(b))}\right)
        \\ &
        = (o(w), a\mapsto 0_A) +
          \left(0_S, a\mapsto \fuse{c(\delta(w)(a))}\right)
      %  \\ &
      %  = (o(w), a\mapsto 0_A) +
      %    \left(0_S, a\mapsto \fuse{c(\delta(w)(a))}\right)
        \\ &
        = (o(w), a\mapsto \fuse{c(\delta(w)(a))})
    \end{align*}
    This says that for any $v\in A$, $o(\fuse{c(v)}) = o(v)$ and for all $a\in \Sigma$
    \[
        \delta(\fuse{c(v)})(a)
        = \fuse{c(\delta(v)(a))}
        \ R\  \delta(v)(a).
    \]
    i.e.~$R$ is a bisimulation.
    \qed
\end{proof}

\subsection*{Proof of \autoref{CFSameCoalgebra}}
\begin{proof}[By induction on $u$ w.r.t.~the connectives of $S$-algebras]
    Put $c^\sharp = \fpair{o^\sharp,\delta^\sharp}$.
    \begin{itemize}
    \item
    \emph{Base Case:} For any $x\in X$, $x\in \Poly{X}$ and $x \in \Poly{X+\Sigma}$ are
    behaviourally equivalent by construction of $\hat c$ and $c^\sharp$.

    \item
    \emph{Step \textqt{$S$-Module-Structure}:}
    The definition of $\hat c$ on $S$-Module connectives is point-wise
    \cite[Sect.~3]{jcssContextFree}, and thus identical to the definition of
    $c^\sharp$.

    \item
    \emph{Step \textqt{Monoid-Structure}:}
    The neutral element is mapped by $\hat c$ to $(1,a\mapsto 0)$
    \cite[Sect.~4]{jcssContextFree}, and this is identical to the definition $c^\sharp$.

    For polynomials $v,w \in \Poly{X}$ and $v',w'\in \Poly{{X+\Sigma}}$, assume
    that $v\sim v'$, $w\sim w'$ (with $\sim$ denoting behavioural equivalence). We have
    \[
        \hat o(v*w)
        \overset{\eqref{WEAproperty}}{=} \hat o(v)\cdot \hat o(w)
        \overset{\text{IH}}{=} o^\sharp(v') \cdot o^\sharp(w')
        \overset{\eqref{Hmultiplication}}{=} o^\sharp(v'* w').
    \]
    Note that final homomorphism $\hat c^\dagger: \Poly{X}
    \to \nu H$ preserves multiplication by \cite[Prop 15]{jcssContextFree} and
    the final $c^{\sharp\dagger}$ as well, because it lives in $\Set^T$. So for
    any $x \sim x'$ and $y \sim y'$, $x,y,\in \Poly{X}$, $x',y'\in TX$, we have:
    \[
        \hat c^\dagger(x* y)
        = \hat c^\dagger(x) * \hat c^\dagger(y)
        \overset{x\sim x'}{\underset{y\sim y'}{=}} c^{\sharp\dagger}(x') * c^{\sharp\dagger}(y')
        = c^{\sharp\dagger}(x' * y'),
    \]
    i.e.~$\sim$ is a congruence for $*$ (and also for $+$). The hypothesis $v\sim v'$ implies $\hat\delta(v,a) \sim \delta^\sharp(v',a)$.
    For $a\in \Sigma$, 
    \begin{align*}
        \hat\delta(v*w, a)
        &\overset{\eqref{WEAproperty}}{=} \hat\delta(v,a)*w+\hat o(v)*\hat\delta(w,a)
        \\ &
        \overset{\text{IH}}{\sim} \delta^\sharp(v',a)*w'+o^\sharp(v)*\delta^\sharp(w',a)
        \\ &
        \overset{\mathclap{\text{\autoref{bisimilarAlgebra}}}}{\sim}\,\,\quad
            \delta^\sharp(v',a)*\fuse{o^\sharp(w'),\delta^\sharp(w')}
            +o^\sharp(v)*\delta^\sharp(w',a)
        \\ &
        \overset{\eqref{Hmultiplication}}{=}
            \delta^\sharp(v'*w', a).
    \end{align*}
    So $v*w \sim v' * w'$.
    \qed
    \end{itemize}
\end{proof}

\subsection*{Proof of \autoref{mndfilteredcolimits}}
\begin{proof}
    Let $D: \D\to \Mndf(\C)$, $Di = (M_i, \eta^i, \mu^i)$ be a filtered diagram.
    Take its colimit $M = \colim D$ with injections $\inj_i: M_i \to M$ in
    $\Funf(\C)$ and define a monad unit by
    \[
        \eta \equiv \big(
            \Id \xrightarrow{\eta^i} M_i
                \xrightarrow{\inj_i} M
        \big),
        \quad\text{ for any }i\in \D.
    \]
    Similarly, define the monad multiplication $\mu: MM\to M$ as the unique
    natural transformation with
    \[
        \begin{tikzcd}[ampersand replacement=\&]
            M_iM_i
                \arrow{r}{\mu^i}
                \arrow{d}[left]{\inj_i* \inj_i}
            \& M_i
                \arrow{d}[right]{\inj_i}
            \\
            MM
                \arrow[dashed]{r}{\mu}
            \& M
        \end{tikzcd}
        \quad\text{for any }i\in \D.
    \]
    The filteredness of $D$ proves the independence of the choice of $i$: for
    any other candidate $j\in \D$ choose an upper bound $m_{i,k}: M_i
    \rightarrow M_k \leftarrow M_j: m_{j,k}$ of $M_i$ and
    $M_j$. Then we have a commutative diagram
    \[
    \begin{tikzcd}[ampersand replacement=\&]
        \&
        M_i
            \arrow{dr}[above right]{\inj_i}
            \arrow{d}{m_{i,k}}
        \\
        \Id
        \arrow{ur}[above left]{\eta^i}
        \arrow{dr}[below left]{\eta^j}
        \arrow{r}[above]{\eta^k}
        \& M_k
            \arrow{r}[above]{\inj_k}
        \& M
        \\
        \&
        M_j
            \arrow{u}[right]{m_{j,k}}
            \arrow{ur}[below right]{\inj_j}
    \end{tikzcd}
    \]
    The left-hand triangles commute because $m_{i,k}, m_{j,k}$ are monad
    morphisms and the right-hand triangles because $m_{i,k}, m_{j,k}$ are
    connecting natural transformations of $D$ and the $\inj$ the colimit
    injections.

    Note that $(M_iM_i)_{i\in \D}$ is a filtered diagram with colimit $MM$ in
    $\Funf(\C)$. Let us check the monad laws:
    \begin{itemize}
    \item Unit laws: the diagrams
    \[
        \begin{tikzcd}[ampersand replacement=\&]
            M_i
            \arrow{dd}[left]{\inj_i}
            \arrow{r}{\eta^iM_i}
            \arrow[oldequal, shiftarr={yshift=7mm}]{rr}
        \descto[fill=none]{dr}{\text{\parbox{2cm}{\centering\scriptsize
                            Naturality\\of $\eta^i$ }}}
            \&
            M_iM_i
            \arrow{d}{M_i\inj_i}
            \arrow{r}{\mu^i}
        \descto[fill=none,xshift=2mm]{ddr}{\text{\parbox{2cm}{\centering\scriptsize
                            Definition\\of $\mu$ }}}
            \&M_i
            \arrow{dd}{\inj_i}
            \\
            {}
        \descto[fill=none,pos=0.8]{dr}{\text{\parbox{2cm}{\centering\scriptsize
                            Def.~$\eta$ }}}
            \&M_iM
                \arrow{d}{\inj_iM}
            \\
            M
                \arrow{ur}[above left]{\eta^iM}
                \arrow{r}[below]{\eta M}
            \& MM \arrow{r}[below]{\mu}
            \& M
        \end{tikzcd}
        \quad\text{ and }\quad
        \begin{tikzcd}[ampersand replacement=\&]
            M_i
                \arrow{r}{M_i\eta^i}
                \arrow[oldequal, shiftarr={yshift=7mm}]{rr}
                \arrow{dr}[below left]{M_i\eta}
                \arrow{dd}[left]{\inj_i}
            \& M_iM_i
                \arrow{r}{\mu^i}
                \arrow{d}{M_i\inj_i}
        \descto[fill=none,xshift=2mm]{ddr}{\text{\parbox{2cm}{\centering\scriptsize
                            Definition\\of $\mu$ }}}
            \& M_i
                \arrow{dd}{\inj_i}
            \\
            {}
        \descto[fill=none,pos=0.8]{ur}{\text{\parbox{2cm}{\centering\scriptsize
                            Def.~$\eta$ }}}
            \& M_iM
                \arrow{d}{\inj_iM}
            \\
            M
                \arrow{r}[below]{M\eta}
            \& MM
                \arrow{r}[below]{\mu}
            \& M
        \end{tikzcd}
    \]
    commute. As the $\inj_i$ are jointly epic, $(M,\eta,\mu)$ fulfills the unit
    laws.

    \item Associativity:
    \[
    \begin{tikzcd}[ampersand replacement=\&]
        M_iM_iM_i
            \arrow{rrr}{\mu^iM_i}
            \arrow{ddd}[left]{M_i\mu^i}
            \arrow{dr}[sloped,above]{\inj_i*\inj_i*\inj_i}
        \&\&\&
        M_iM_i
            \arrow{ddd}{\mu^i}
            \arrow{dl}[sloped,above]{\inj_i*\inj_i}
        \\[2mm]
        {}
        \& MMM \arrow{r}{\mu M}
              \arrow{d}{M \mu}
        \& MM \arrow{d}{\mu}
        \\
        {}
        \& MM \arrow{r}{\mu}
        \& M
        \\[2mm]
        M_i \arrow{rrr}[below]{\mu^i}
            \arrow{ur}[sloped,above]{\inj_i*\inj_i}
        \&\&\&
        M_i
            \arrow{ul}[sloped,above]{\inj_i}
    \end{tikzcd}
    \]
    The outside commutes, and by definition of $\mu$ also all inner parts
    (except possibly for the middle square). As the $\inj_i*\inj_i*\inj_i$ are
    jointly epic, the middle square commutes as well. 
    \end{itemize}
    By definition of $\eta$ and $\mu$, each $\inj_i: M_i\to M$ is a monad
    morphism. In fact, $\eta$ and $\mu$ are the unique natural transformations
    making the diagrams (in the definition) commute, i.e.~are the unique monad
    structure on $M$ such that $\inj_i$ is a monad morphism.

    To see that $(M,\eta,\mu)$ is a colimiting cocone, consider another cocone
    $n_i: M_i \to N$ in $\Mndf(\C)$. This induced a unique natural
    transformation $m: M\to N$ with $n_i = m\cdot \inj_i$. To see that $m$ is
    also a monad morphism, use the jointly epicness of the $\inj_i$:
    \[
        m \cdot \eta = m\cdot \inj_i \cdot \eta^i = n_i\cdot \eta^i = \eta^N,
    \]

    Consider the following diagram:
    \[
    \begin{tikzcd}[ampersand replacement=\&]
        M_iM_i
            \arrow{rrr}{\mu^i}
            \arrow{dr}[sloped,above]{\inj_i*\inj_i}
            \arrow[bend right]{ddr}[sloped,below]{n_i*n_i}
        \&\&\& M_i
            \arrow{dl}[sloped,above]{\inj_i}
            \arrow[bend left]{ddl}[sloped,below]{n_i}
        \\
        {}
        \& MM \arrow{r}{\mu}
             \arrow{d}[left]{m*m}
             \descto{dr}{?}
        \& M \arrow{d}{m}
        \\
        {}
        \& NN \arrow{r}{\mu^N}
        \& N
    \end{tikzcd}
    \]
    The outside commutes, because $n_i$ is a monad morphism. The outer triangles
    commute on the level of $\Funf(\C)$ and the upper part commutes because
    $\inj_i$ is a monad morphism. Again, as the $\inj_i*\inj_i$ are jointly
    epic, the inner square commutes as well, hence $m$ is a monad morphism.
    \qed
\end{proof}

\subsection*{Proof of \autoref{quasiEpiPreservation}}
\begin{proof}
    Strong epis in slice categories are carried by strong epis, so consider a
    strong epi $q: A\to B$ in $\Mndf(\Set)$.
    Consider the (strong epi,mono)-factorizations of the components in $\Funf$:
    \[
    \begin{tikzcd}
    M \arrow[shiftarr={yshift=7mm}]{rr}{q}
       \arrow[->>]{r}{e}
    & I \arrow[>->]{r}{m}
    & N
    \end{tikzcd}
    \]
    The factorization lifts further to $\Mndf(\Set)$, i.e.~we have factorized
    the monad morphism $q$ into an epi $e$ and  a mono $m$ in $\Mndf(\Set)$. As
    any strong epi is also extremal, we get that $m$ is an isomorphism. Hence
    $q$ has epic components.
    All
    \Set-functors preserve (strong) epis, so $Hq_X+\Id$ is epic for any set $X$ and so
    the natural transformation $Hq+\Id$ as well.
\end{proof}

\subsection*{Proof of \autoref{EQquotient}}
\begin{proof}
    $(B,\beta)$ is the strong quotient of a $(F^{H+V}, \hat\kappa
    \cdot \inl)$, which again is a quotient of $(F^{H+P}, \hat\kappa\cdot
    \inl)$, where $P$ a polynomial functor and therefore an epi-projective in $\Funf(\C)$.
    \[
        \begin{tikzcd}
            (F^{H+P}, \hat\kappa\cdot \inl)
            \arrow[->>]{r}{q_P}
            \arrow[->>,shiftarr={yshift=-6mm}]{rr}[below]{=: q}
            &
            (F^{H+V}, \hat\kappa\cdot \inl)
            \arrow[->>]{r}{q_V}
            &
            (B, \beta)
            \arrow[->]{r}{b}
            &
            \H (B, \beta)
        \end{tikzcd}
    \]
    This corresponds to a natural transformation $\overline{b\cdot q}: P \to
    HB+\Id$. As $P$ is projective and by \autoref{quasiEpiPreservation} $\Hf q$ is epic as a
    natural transformation, we get a natural transformation $p: P \to
    HF^{H+P}+\Id$ such that the diagram on the left below commutes:
    \[
        \begin{tikzcd}
            P \arrow{dr}[below left]{\overline{b\cdot q}}
            \arrow[dashed]{r}{p}
            &
            HF^{H+P}+\Id
            \arrow[->>]{d}{Hq+\Id}
            \\
            {}
            &
            HB+ \Id
        \end{tikzcd}
        \quad\Longleftrightarrow
        \begin{tikzcd}
            (F^{H+P},\hat\kappa\cdot \inl)
            \arrow{r}{\bar p}
            \arrow[->>]{d}[left]{q}
            &
            \Hf(F^{H+P},\hat\kappa\cdot \inl)
            \arrow{d}{Hq+\Id}
            \\
            (B,\beta)
            \arrow{r}{b}
            &
            \Hf(B,\beta)
        \end{tikzcd}
    \]
    It follows that the coalgebra $b$ is the strong quotient of the coalgebra $\bar p$, which is a coalgebra in $\EQ$.
\end{proof}

    \end{appendix}
\else
    % no appendix in non-full version
\fi

\end{document}

% vim: tw=80 spell spelllang=en nocul
%%% Local Variables:
%%% mode: latex
%%% TeX-master: t
%%% End: